\newcommand{\R}{\mathbb{R}}
\def\Xint#1{\mathchoice
{\XXint\displaystyle\textstyle{#1}}%
{\XXint\textstyle\scriptstyle{#1}}%
{\XXint\scriptstyle\scriptscriptstyle{#1}}%
{\XXint\scriptscriptstyle\scriptscriptstyle{#1}}%
\!\int}
\def\XXint#1#2#3{{\setbox0=\hbox{$#1{#2#3}{\int}$ }
\vcenter{\hbox{$#2#3$ }}\kern-.6\wd0}}
\def\dashint{\Xint-}
\author{Brian Allen}
\date{Fall 2017}
\title{IMCF and the Stability of the PMT and RPI Under $L^2$ Convergence}
\newtheorem{Thm}{Theorem}[section]
\newtheorem{Cor}[Thm]{Corollary}
\newtheorem{Prop}[Thm]{Proposition}
\newtheorem{Lem}[Thm]{Lemma}
\newtheorem{Def}[Thm]{Definition}
\begin{document}

\maketitle

\begin{abstract}
We study the stability of the Positive Mass Theorem (PMT) and the Riemannian Penrose Inequality (RPI) in the case where a region of an asymptotically flat manifold $M^3$ can be foliated by a smooth solution of Inverse Mean Curvature Flow (IMCF) which is uniformly controlled. We consider a sequence of regions of asymptotically flat manifolds $U_T^i\subset M_i^3$, foliated by a smooth solution to IMCF which is uniformly controlled, and if $\partial U_T^i = \Sigma_0^i \cup \Sigma_T^i$ and $m_H(\Sigma_T^i) \rightarrow 0$ then $U_T^i$ converges to a flat annulus with respect to $L^2$ metric convergence. If instead $m_H(\Sigma_T^i)-m_H(\Sigma_0^i) \rightarrow 0$ and $m_H(\Sigma_T^i) \rightarrow m >0$ then we show that $U_T^i$ converges to a topological annulus portion of the Schwarzschild metric with respect to $L^2$ metric convergence.
\end{abstract}

\section{Introduction}\label{sec:intro}

If we consider a complete, asymptotically flat manifold with nonnegative scalar curvature $M^3$ then the Positive Mass Theorem (PMT) says that $M^3$ has positive ADM mass. The rigidity statement says that if $m_{ADM}(M) = 0$ then $M$ is isometric to Euclidean space. Similarly, the Riemannian Penrose Inequality says that if $\partial M$ consists of an  outermost minimal surface $\Sigma_0$ then
\begin{align}
 m_{ADM}(M) \ge \sqrt{\frac{|\Sigma_0|}{16 \pi}}
\end{align}
where $|\Sigma_0|$ is the area of $\Sigma_0$. In the case of equality, i.e. $m_{ADM}(M) = \sqrt{\frac{|\Sigma_0|}{16 \pi}}$, then $M$ is isometric to the Schwarzschild metric. In this paper we are concerned with the stability of these two rigidity statements in the case where we can foliate a region of $M$ by a smooth solution of Inverse Mean Curvature Flow (IMCF) that is uniformly controlled.

The stability problem for the PMT has been studied by Lee \cite{L}, Lee and Sormani \cite{LS1}, Huang, Lee and Sormani \cite{HLS},  LeFloch and Sormani \cite{LeS},  Finster \cite{F}, Finster and Bray \cite{BF},  Finster and Kath  \cite{FK}, and by Corvino \cite{C}. In the work of Lee \cite{L}, Lee considers a sequence of harmonically flat manifolds with ADM mass converging to $0$ and is able to show uniform convergence of the metric outside a ball of a specified radius. In the work of Finster \cite{F}, Bray and Finster \cite{BF}, and Finster and Kath \cite{FK}, spinors are used to obtain $L^2$ estimates of the curvature tensor outside of a set of measure zero. From these estimates stability results are obtained in the sense that the curvature tensor is small in the $L^2$ norm if the mass is small. 

The work presented here is closely related to the work of Lee and Sormani \cite{LS2, LS1}, as well as LeFloch and Sormani \cite{LeS}, where stability of the PMT, stability of the RPI, and compactness properties for Hawking mass are obtained for rotationally symmetric manifolds under intrinsic flat convergence (in fact Lipschitz convergence in the case of \cite{LS2}). In \cite{LS1}, Lee and Sormani conjecture that the PMT should be stable with respect to intrinsic flat convergence for a general class of sequences of asymptotically flat manifolds (See Conjecture 6.2 of \cite{LS1} for details and discussion). In this paper we make an attempt at the general case by showing stability of the PMT and the RPI, with respect to $L^2$ convergence, when our sequence of manifolds can by foliated by a uniformly controlled IMCF. It is still a problem of interest to extend the stability results of this paper to intrinsic flat convergence in order to directly address the conjecture stated in \cite{LS1}. 

The main tool in this paper is IMCF which we remember is defined for surfaces $\Sigma^n \subset M^{n+1}$ evolving through a one parameter family of embeddings $F: \Sigma \times [0,T] \rightarrow M$, $F$ satisfying inverse mean curvature flow

\begin{equation}
\begin{cases}
\frac{\partial F}{\partial t}(p,t) = \frac{\nu(p,t)}{H(p,t)}  &\text{ for } (p,t) \in \Sigma \times [0,T)
\\ F(p,0) = \Sigma_0  &\text{ for } p \in \Sigma 
\end{cases}
\end{equation}
where $H$ is the mean curvature of $\Sigma_t := F_t(\Sigma)$ and $\nu$ is the outward pointing normal vector. The outward pointing normal vector will be well defined in our case since we have in mind, $M^3$, an asymptotically flat manifold with one end.

In \cite{HI}, Huisken and Ilmanen show how to use weak solutions of IMCF in order to prove the RPI in the case of a connected boundary and they note that their techniques give another proof of the  PMT for asymptotically flat Riemanian manifolds when $n=3$ (see Schoen and Yau \cite{SY}, and Witten \cite{W} for more general proofs of the PMT as well as Bray \cite{B} for a more general proof of the RPI). The rigidity results of both the PMT and the RPI are also proved in \cite{HI} and the present work builds off of these arguments by using IMCF to provide a special coordinate system on each member of the sequence of manifolds $M^3_i$ which is leveraged throughout the paper. For a glimpse of long time existence and asymptotic analysis results for smooth IMCF in various ambient manifolds see \cite{CG1,U,CG2,QD,S,BA}.

If we have $\Sigma^2$ a surface in a Riemannian manifold, $M^3$, we will denote the induced metric, mean curvature, second fundamental form, principal curvatures, Gauss curvature, area, Hawking mass and Neumann isoperimetric constant as $g$, $H$, $A$, $\lambda_i$, $K$, $|\Sigma|$, $m_H(\Sigma)$, $IN_1(\Sigma)$, respectively. We will denote the Ricci curvature, scalar curvature, sectional curvature tangent to $\Sigma$, and ADM mass as $Rc$, $R$, $K_{12}$, $m_{ADM}(M)$, respectively.

Now the class of regions of manifolds to which we will by proving stability of the PMT and RPI is defined.

\begin{Def} \label{IMCFClass} Define the class of manifolds with boundary foliated by IMCF as follows
\begin{align*}
\mathcal{M}_{r_0,H_0,I_0}^{T,H_1,A_1}:=\{& U_T \subset M, R \ge 0|
\exists \Sigma \subset M \text{compact, connected surface such that } 
\\& IN_1(\Sigma) \ge I_0, m_H(\Sigma) \ge 0 \text{,and } |\Sigma|=4\pi r_0^2. 
\\ &\exists \Sigma_t \text{ smooth solution to IMCF, such that }\Sigma_0=\Sigma,
\\& H_0 \le H(x,t) \le H_1, |A|(x,t) \le A_1 \text{ for } t \in [0,T],
\\&\text{and } U_T = \{x\in\Sigma_t: t \in [0,T]\} \}
\end{align*}
where $0 < H_0 < H_1 < \infty$, $0 < I_0,A_1,r_0 < \infty$ and $0 < T < \infty$.
\end{Def}
\textbf{Note:} The upper bound on $|A|$ implies an upper bound on $H$ but we make a distinction between these bounds for notational convenience.

\textbf{Note:} One should imagine that $M$ is asymptotically flat in the definition above but we do not need to impose this condition directly since we will be proving stability of compact regions of manifolds $M^i$ in terms of the Hawking mass of the outermost boundary.

Before we state the stability theorems we define some metrics on $\Sigma \times [0,T]$ that will be used throughout this document.
\begin{align}
\delta &= \frac{r_0^2}{4}e^t dt^2 + r_0^2e^t \sigma
\\ g_s &= \frac{r_0^2}{4}\left (1 - \frac{2}{r_0} m e^{-t/2} \right )^{-1} e^tdt^2 + r_0^2e^t \sigma
\\ \hat{g}^i&=\frac{1}{H(x,t)^2}dt^2 + g^i(x,t)
\end{align}
where $\sigma$ is the round metric on $\Sigma$ and $g^i(x,t)$ is the metric on $\Sigma_t^i$. The first metric is the flat metric, the second is the Schwarzschild metric and the third is the metric on $U_T^i$ with respect to the foliation.

\textbf{Note:} These relationships can be observed if we define $s = r_0 e^{t/2}$ then $ds^2 = \frac{r_0^2}{4}e^t dt^2$, $\delta = ds^2 + s^2 d\sigma$ and $g_s = \frac{1}{1 - \frac{2m}{s}}ds^2 + s^2 d\sigma$.

\begin{Thm}\label{SPMT}
Let $U_{T}^i \subset M_i^3$ be a sequence s.t. $U_{T}^i\subset \mathcal{M}_{r_0,H_0,I_0}^{T,H_1,A_1}$ and $m_H(\Sigma_{T}^i) \rightarrow 0$ as $i \rightarrow \infty$.  If we assume one of the following conditions,
\begin{enumerate}
\item $\exists$  $I > 0$ so that $K_{12}^i \ge 0$  and diam$(\Sigma_0^i) \le D$ $\forall$ $i \ge I$, 
\item $\exists$  $[a,b]\subset [0,T]$ such that $\| Rc^i(\nu,\nu)\|_{W^{1,2}(\Sigma\times [a,b])} \le C$ and diam$(\Sigma_t^i) \le D$ $\forall$ $i$, $t\in [a,b]$,where $W^{1,2}(\Sigma\times [a,b])$ is defined with respect to $\delta$,
\end{enumerate}
then
\begin{align}
\hat{g}^i \rightarrow \delta
\end{align}
in $L^2$ with respect to $\delta$.
\end{Thm}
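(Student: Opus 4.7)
The plan is to use Geroch's monotonicity of Hawking mass along smooth IMCF as the driving engine, track the three nonnegative defects it controls, and then convert these defects into $L^2$ convergence of the metric $\hat g^i$ to $\delta$. Since the ambient scalar curvature satisfies $R\ge 0$ by definition of $\mathcal{M}_{r_0,H_0,I_0}^{T,H_1,A_1}$, Geroch's identity reads
\begin{equation*}
\frac{d}{dt}m_H(\Sigma_t^i)=\frac{|\Sigma_t^i|^{1/2}}{(16\pi)^{3/2}}\int_{\Sigma_t^i}\!\!\left[\frac{2|\nabla H|^2}{H^2}+R+\frac{(\lambda_1-\lambda_2)^2}{2}\right]dA\ge 0.
\end{equation*}
Because $m_H(\Sigma_0^i)\ge 0$ and $m_H(\Sigma_T^i)\to 0$, the intermediate Hawking masses tend to zero uniformly in $t$, and integrating the identity in $t$ together with the uniform bounds on $H$, $|A|$, and the area of each slice shows that $|\nabla H|/H$, $\lambda_1-\lambda_2$, and $R$ all tend to zero in $L^2(\Sigma\times[0,T])$.

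Next I would promote this to convergence of the metric ingredients of $\hat g^i$. The area along IMCF is rigidly $|\Sigma_t^i|=4\pi r_0^2 e^t$, so vanishing Hawking mass on each slice forces $\int_{\Sigma_t^i}H^2\to 16\pi$; combined with $|\nabla H|\to 0$ in $L^2$ and a Poincar\'e inequality on $\Sigma_t^i$ (available from the uniform $|A|$- and $H$-bounds, which give uniform control of the intrinsic geometry of each slice), $H$ becomes $L^2$-close to its mean and then to the value $2/(r_0 e^{t/2})$. This yields $1/H^2\to (r_0^2/4)e^t$ in $L^2$, handling the $dt^2$-coefficient of $\hat g^i$. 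For the spatial part I would use the IMCF evolution $\partial_t g_{ij}=2A_{ij}/H$ together with near-umbilicity $A_{ij}/H\approx g_{ij}/2$ in $L^2$, so $\partial_t g_{ij}\approx g_{ij}$ and a Gronwall-type argument propagates this into $g^i(x,t)\approx e^t\,g^i(x,0)$ in $L^2$.

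It remains to show $g^i(x,0)\to r_0^2\sigma$ in $L^2$, which is where the two supplementary hypotheses enter. Under (1), Gauss-Bonnet $\int K=4\pi$ combined with $K=K_{12}+\lambda_1\lambda_2$, $\int H^2\to 16\pi$, and $\lambda_1-\lambda_2\to 0$ in $L^2$ gives $\int K_{12}^i\to 0$; since $K_{12}^i\ge 0$ we obtain $K_{12}^i\to 0$ in $L^1$, so the intrinsic Gauss curvature of $\Sigma_0^i$ converges to $1/r_0^2$, and the diameter bound plus the uniform $|A|$-bound yield via a Cheeger-Gromov type extraction that $g^i(\cdot,0)\to r_0^2\sigma$. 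Under (2), the $W^{1,2}(\Sigma\times[a,b])$-bound on $Rc(\nu,\nu)$ together with the diameter bound gives the compactness needed to pass to a limit on the strip, and the IMCF evolution then propagates the conclusion back to $t=0$.

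The hardest step, I expect, will be this final one: forcing the initial slice to be close to a round sphere of area $4\pi r_0^2$ despite only having time-integrated smallness of the Geroch defects. The class $\mathcal{M}_{r_0,H_0,I_0}^{T,H_1,A_1}$ is agnostic about the roundness of $\Sigma_0^i$, which is precisely why the hypotheses (1) and (2) are needed. A secondary technical difficulty is translating the convergences above, which are naturally measured against the evolving $d\mu_{g^i}\,dt$, into $L^2$ convergence against the fixed flat background $dV_\delta$; the uniform two-sided bounds on $H$ and $|A|$ give equivalence of measures and should reduce this to careful bookkeeping rather than a genuine obstacle.
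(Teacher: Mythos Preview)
Your outline matches the paper's strategy closely: Geroch monotonicity forces the three defect terms to vanish in $L^2$, a Poincar\'e inequality on the slices (obtained in the paper by evolving the Neumann isoperimetric constant via the $|A|$ and $H$ bounds and invoking Cheeger's inequality, using the assumed lower bound $I_0$) promotes $|\nabla H|\to 0$ to $H\to \bar H$, and the evolution $\partial_t g = 2A/H$ together with near-umbilicity gives $g^i(x,t)\approx e^t g^i(x,0)$; the paper organizes this as a chain $\hat g^i\to g_1^i\to g_2^i\to g_3^i$ of intermediate warped products, which is exactly your decomposition.

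Two places where your sketch is looser than the paper's argument are worth flagging. First, for the final rigidity step the paper does not use a Cheeger--Gromov extraction but rather the Petersen--Wei $L^p$ curvature pinching theorem (their Corollary~1.5 with $n=2$, $p=2$): once $\int_{\Sigma}|K-r_0^{-2}|^2\,d\mu$ is small and the diameter is bounded, one gets $C^\alpha$ closeness of $g^i(\cdot,0)$ to $r_0^2\sigma$. Second, under hypothesis (2) the $W^{1,2}$ bound gives, via Rellich, strong $L^2$ subsequential convergence of $Rc^i(\nu,\nu)$ to \emph{some} $k(x,t)$, but you still have to identify $k\equiv 0$; the paper does this by a separate weak-convergence computation (integrating $\partial_t\int \phi H^2$ against test functions $\phi$ and using the already-established defect convergences), which is the step your sentence ``compactness needed to pass to a limit on the strip'' elides. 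Once $Rc^i(\nu,\nu)\to 0$ strongly in $L^2$ on $\Sigma\times[a,b]$, a Fubini argument picks a good slice $t_0$ where $\int_\Sigma (K_{12}^i)^2\to 0$, and then Petersen--Wei applies there; the paper does not ``propagate back to $t=0$'' via IMCF but simply uses that $g_3^i$ already carries the factor $e^t g^i(x,0)$, so roundness at one slice is roundness at all.
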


\begin{Thm}\label{SRPI}
Let $U_{T}^i \subset M_i^3$ be a sequence s.t. $U_{T}^i\subset \mathcal{M}_{r_0,H_0,I_0}^{T,H_1,A_1}$, $m_H(\Sigma_{T}^i)- m_H(\Sigma_0^i) \rightarrow 0$ and $m_H(\Sigma_T^i) \rightarrow m > 0$ as $i \rightarrow \infty$. If we assume that $\exists$ $[a,b] \subset [0,T]$ such that $\| Rc^i(\nu,\nu)\|_{W^{1,2}(\Sigma\times [a,b])} \le C$ and diam$(\Sigma_t^i) \le D$ $\forall$ $i$, $t\in [a,b]$, where $W^{1,2}(\Sigma\times [a,b])$ is defined with respect to $\delta$, then
\begin{align}
\hat{g}^i \rightarrow g_s
\end{align}
in $L^2$ with respect to $\delta$.
\end{Thm}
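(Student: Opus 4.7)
The plan is to adapt the strategy used in Theorem \ref{SPMT} but aim for the Schwarzschild limit rather than the flat one, exploiting Geroch's monotonicity formula and the Hawking mass identity to extract both the cross-sectional and radial information from $\hat{g}^i$.

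First I would apply Geroch's monotonicity formula along IMCF, which writes $\frac{d}{dt}m_H(\Sigma_t^i)$ as a positive multiple of $\int_{\Sigma_t^i}$ of a sum of nonnegative integrand terms (using $R^i\ge 0$ and $m_H(\Sigma_0^i)\ge 0$ from the class $\mathcal{M}_{r_0,H_0,I_0}^{T,H_1,A_1}$), including $|\nabla H|^2/H^2$, $(\lambda_1-\lambda_2)^2$, and $R$. Integrating in $t$ from $0$ to $T$ and using the hypothesis $m_H(\Sigma_T^i)-m_H(\Sigma_0^i)\to 0$ forces each of these terms to vanish in $L^1(\Sigma\times[0,T])$. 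Monotonicity combined with $m_H(\Sigma_T^i)\to m$ then squeezes $m_H(\Sigma_t^i)\to m$ uniformly in $t\in[0,T]$.

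Next I would handle the cross-sectional piece $g^i(x,t)$. The $L^1$-vanishing of $(\lambda_1-\lambda_2)^2$, together with the uniform bound $|A^i|\le A_1$, upgrades to $A^i-\tfrac{H^i}{2}g^i\to 0$ in $L^2$. Plugging this into the IMCF evolution $\partial_t g^i=2A^i/H^i$ yields $\partial_t g^i\approx g^i$, so that, up to an $L^2$ error, $g^i(x,t)\approx e^{t}g^i(x,0)$. Using the area normalization $|\Sigma_0^i|=4\pi r_0^2$, the isoperimetric bound $IN_1(\Sigma_0^i)\ge I_0$, the diameter bound on $[a,b]$, and the Gauss equation combined with the $W^{1,2}$ control on $Rc^i(\nu,\nu)$, the initial slice is forced to converge to a round sphere of radius $r_0$, i.e., $g^i(x,0)\to r_0^2\sigma$ in $L^2$. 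Propagating via the approximate evolution gives $g^i(x,t)\to r_0^2 e^t \sigma$ in $L^2$ on $\Sigma\times[0,T]$.

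For the radial coefficient, I would use the IMCF identity $|\Sigma_t^i|=4\pi r_0^2 e^t$ together with the definition of Hawking mass,
\[
m_H(\Sigma_t^i)=\frac{r_0 e^{t/2}}{2}\left(1-\frac{1}{16\pi}\int_{\Sigma_t^i}(H^i)^2\,dA\right),
\]
to conclude that $\fint_{\Sigma_t^i}(H^i)^2 \to \frac{4}{r_0^2 e^t}\bigl(1-\tfrac{2m}{r_0 e^{t/2}}\bigr)$. The $L^2$-smallness of $|\nabla H|/H$ on $\Sigma\times[0,T]$, combined with a Poincar\'e inequality on each leaf (available from $IN_1\ge I_0$ propagated along IMCF and the uniform mean curvature bounds), implies $H^i(\cdot,t)$ is close in $L^2$ to its spherical mean, and therefore $H^i\to H_s$ in $L^2$, where
\[
H_s(t)=\frac{2}{r_0 e^{t/2}}\sqrt{1-\frac{2m}{r_0 e^{t/2}}}
\]
is exactly the mean curvature of the coordinate sphere of radius $s=r_0 e^{t/2}$ in Schwarzschild. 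Using the uniform lower bound $H^i\ge H_0>0$, convergence passes to $1/(H^i)^2 \to 1/H_s^2 = \tfrac{r_0^2 e^t}{4}(1-\tfrac{2m}{r_0 e^{t/2}})^{-1}$ in $L^2$, which matches the $dt^2$-coefficient of $g_s$.

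The main obstacle will be producing genuine $L^2$ convergence of $H^i$ to a purely $t$-dependent function: the Poincar\'e-type inequalities needed to convert $\|\nabla H^i/H^i\|_{L^2}\to 0$ into closeness of $H^i$ to its mean must be uniform across the sequence, which is why the roundness of the leaves (controlled via the $W^{1,2}$ bound on $Rc^i(\nu,\nu)$ on $[a,b]$, the diameter bound, and $IN_1\ge I_0$) is essential. Once this is done, the estimates on the strip $[a,b]$ must be propagated to all of $[0,T]$ by combining the IMCF evolution equations for $g^i$ and $H^i$ with the already-established $L^2$ smallness coming from Geroch monotonicity, and the delicate part (compared with Theorem \ref{SPMT}) is that the Schwarzschild correction $2m/s$ must emerge precisely from the Hawking mass identity rather than simply dropping out in the flat case $m=0$.
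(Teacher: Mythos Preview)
Your proposal is correct and follows essentially the same route as the paper: Geroch monotonicity to kill the nonnegative integrands, Poincar\'e on the leaves (via the isoperimetric control propagated by IMCF) to pass from $\|\nabla H/H\|_{L^2}\to 0$ to $H^i\to \bar H^i$ in $L^2$, the Hawking-mass identity to pin $\bar H^i$ to the Schwarzschild value, the umbilicity estimate to reduce $g^i(x,t)$ to $e^t g^i(x,0)$, and finally the $W^{1,2}$ bound on $Rc^i(\nu,\nu)$ together with the Gauss equation to force roundness of a reference leaf. The paper organizes the metric convergence as a chain $\hat g^i\to g_1^i\to g_2^i\to g_3^i\to g_s$, but this is exactly the decomposition implicit in your sketch.

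Two ingredients you leave implicit are worth naming, since they are where the argument actually bites. First, the $W^{1,2}$ bound is used via Rellich compactness to upgrade the \emph{weak} convergence of $Rc^i(\nu,\nu)$ (which is all that comes directly from the IMCF computation; see the paper's Lemma~\ref{WeakRicciEstimate}) to strong $L^2$ convergence on $\Sigma\times[a,b]$, with the limit identified as $-\tfrac{2m}{r_0^3}e^{-3t/2}$; without this compactness step one only knows the spherical averages converge, which is not enough to control $K^i$ pointwise. Second, going from $\int_\Sigma (K^i-\lambda)^2\,d\mu\to 0$ to $g^i(x,t_0)\to r_0^2 e^{t_0}\sigma$ in the metric sense is not soft: the paper invokes the integral-pinching rigidity theorem of Petersen--Wei (their Corollary~1.5, stated here as Corollary~\ref{rigidity}), which requires precisely the diameter bound you assumed and gives $C^\alpha$ closeness to the round metric. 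Your phrase ``the initial slice is forced to converge to a round sphere'' is where this external input enters.
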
 

Now we would like to understand how the above theorems apply to sequences of asymptotically flat manifolds which are foliated by a long time solution of IMCF. For this we will define the special class of asymptotically flat sequences of manifolds that we will be able to deal with in this paper.

\begin{Def}\label{AF}
We say a complete, Riemannian manifold $(M^3,g)$ is an asymptotically flat manifold if there exists $K \subset M$, compact, so that $M \setminus K$ is diffeomorphic to $\R^3\setminus \overline{B}(0,1)$ and so that the metric satisfies
\begin{align}
|g_{ij} - \delta_{ij}|&\le \frac{C}{|x|}\label{MetricCond}
\\|g_{ij,k}| &\le \frac{C}{|x|^2}\label{DerMetricCond}
\\|g_{ij,kl}| \le C &\hspace{0.5cm} |g_{ij,klm}| \le C\label{Der2MetricCond}
\end{align}
as $|x| \rightarrow \infty$ where the derivatives are taken with respect to $\delta$. If $\partial M \not = \emptyset$ then we require $\partial M$ to be an outermost, minimal surface.

We say a sequence of asymptotically flat manifolds $M_j=(M,g_j)$ is uniformly asymptotically flat if the constants in \eqref{MetricCond}, \eqref{DerMetricCond} and \eqref{Der2MetricCond} can be chosen uniformly for the sequence.
\end{Def}

\textbf{Note:} The condition \eqref{Der2MetricCond} is not typically included in the definition of asymptotic flatness and is only used to gain control on derivatives of the Ricci tensor in order to apply Theorems \ref{SPMT} and \ref{SRPI} to prove Corollaries \ref{PMTCOR} and \ref{RPICOR} below.

As a consequence of Theorems \ref{SPMT} and \ref{SRPI} we have the following results when a long time solution exists on a sequence of uniformly asymptotically flat manifolds.
\begin{Cor}\label{PMTCOR}
Assume for all $M_i$ the smooth solution of IMCF starting at $\Sigma_0$ exists for all time, so that for all $T \in (0, \infty)$, $U_{T,i}\subset \mathcal{M}_{r_0,H_0^T,I_0}^{T,H_1,A_1}$, where $H_0^T \rightarrow 0$ as $T \rightarrow \infty$. In addition, we define $\displaystyle m_H(\Sigma_{\infty}^i)=\lim_{T\rightarrow \infty}m_H(\Sigma_{T}^i)$ and assume that $\displaystyle m_H(\Sigma_{\infty}^i) \rightarrow 0$ as $i \rightarrow \infty$ and that $M_i$ are uniformly asymptotically flat with respect to the IMCF coordinates then
\begin{align}
\hat{g}^i \rightarrow \delta
\end{align}
on $\Sigma\times[0,T]$ in $L^2$ with respect to $\delta$.
\end{Cor}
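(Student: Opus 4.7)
The plan is to apply Theorem \ref{SPMT} under its second alternative for each fixed $T\in(0,\infty)$. Three conditions must be verified uniformly in $i$: (a) $m_H(\Sigma_T^i)\to 0$; (b) $\|Rc^i(\nu,\nu)\|_{W^{1,2}(\Sigma\times[a,b])}\le C$ for some $[a,b]\subset[0,T]$; and (c) $\mathrm{diam}(\Sigma_t^i)\le D$ uniformly for $t\in[a,b]$. Once these are in place, Theorem \ref{SPMT}(2) immediately gives $\hat g^i\to\delta$ in $L^2$ on $\Sigma\times[0,T]$ for each $T$.

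For (a) I would invoke Geroch monotonicity: since $R\ge 0$ and $\Sigma_t^i$ is a smooth IMCF, $t\mapsto m_H(\Sigma_t^i)$ is nondecreasing. Combined with $m_H(\Sigma_0^i)\ge 0$, built into the class $\mathcal{M}_{r_0,H_0^T,I_0}^{T,H_1,A_1}$, and the hypothesis $m_H(\Sigma_\infty^i)\to 0$, this yields
\[
0 \le m_H(\Sigma_T^i) \le m_H(\Sigma_\infty^i) \longrightarrow 0 \quad\text{as } i\to\infty.
\]

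For (b) and (c) the strengthened asymptotic flatness hypothesis \eqref{Der2MetricCond} is exactly what does the work. Combining $|g_{ij,kl}|\le C$ and $|g_{ij,klm}|\le C$ with \eqref{MetricCond} and \eqref{DerMetricCond} yields uniform pointwise bounds on $|Rc^i|$ and $|\nabla Rc^i|$ through the standard expressions $Rc\sim\partial^2 g+(\partial g)^2$ and $\nabla Rc\sim\partial^3 g+\partial g\cdot\partial^2 g$. Since the uniform asymptotic flatness is stated with respect to the IMCF coordinates and $\Sigma\times[0,T]$ has finite $\delta$-volume, one may take $[a,b]=[0,T]$ and obtain the required $W^{1,2}$ bound on $Rc^i(\nu,\nu)$. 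For (c), $\Sigma_t^i$ sits at coordinate radius $s=r_0 e^{t/2}\le r_0 e^{T/2}$ in the IMCF chart, and \eqref{MetricCond} makes $g^i|_{\Sigma_t^i}$ uniformly comparable to the round metric of radius $s$, so $\mathrm{diam}(\Sigma_t^i)$ is uniformly bounded on $[0,T]$.

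The main (essentially bookkeeping) obstacle is justifying that the phrase ``uniformly asymptotically flat with respect to the IMCF coordinates'' supplies uniform versions of \eqref{MetricCond}--\eqref{Der2MetricCond} on the entire IMCF slab $\Sigma\times[0,T]$, so that the constants appearing in (b) and (c) are genuinely independent of $i$. Once this is granted, Theorem \ref{SPMT}(2) applies with $[a,b]=[0,T]$ and yields $\hat g^i\to\delta$ in $L^2$ for every fixed $T\in(0,\infty)$.
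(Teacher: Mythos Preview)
Your overall strategy matches the paper's: verify the hypotheses of Theorem~\ref{SPMT}(2) (equivalently, Corollary~\ref{End1}) using the uniform asymptotic flatness, and in particular use Geroch monotonicity to pass from $m_H(\Sigma_\infty^i)\to 0$ to $m_H(\Sigma_T^i)\to 0$. That part is fine.

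The gap is in your choice $[a,b]=[0,T]$. The conditions \eqref{MetricCond}--\eqref{Der2MetricCond} in Definition~\ref{AF} are asymptotic: they hold on $M\setminus K$ for some compact $K$, i.e.\ only for sufficiently large $|x|$ (equivalently, for $t$ large in the IMCF coordinate $s=r_0e^{t/2}$). They give no control on $Rc^i$ or $\mathrm{diam}(\Sigma_t^i)$ near $t=0$, so you cannot conclude a uniform $W^{1,2}$ bound on $Rc^i(\nu,\nu)$ or a uniform diameter bound on all of $[0,T]$ from asymptotic flatness alone. The paper handles this by choosing $T_*$ large enough that $\Sigma_t^i$ lies in the asymptotic region for $t$ near $T_*$, and then takes $[a,b]=[T-\epsilon,T]$ for $T\ge T_*$; on that interval the bounds \eqref{MetricCond}--\eqref{Der2MetricCond} are available uniformly and yield (b) and (c). Theorem~\ref{SPMT}(2) only requires \emph{some} $[a,b]\subset[0,T]$, so this suffices. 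Convergence on $\Sigma\times[0,T]$ for all $T\ge T_*$ then gives it for every $T>0$ by restriction. Your ``bookkeeping obstacle'' is therefore not merely bookkeeping: the fix is to move $[a,b]$ out toward the asymptotic end rather than to assert bounds on the whole slab.
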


\begin{Cor}\label{RPICOR}
Assume that for all $M_i$ the smooth solution of IMCF starting at $\Sigma_0$ exists for all time, so that $U_{T,i}\subset \mathcal{M}_{r_0,H_0^T,I_0}^{T,H_1,A_1}$ for all $T \in (0, \infty)$, where $H_0^T \rightarrow 0$ as $T \rightarrow \infty$. In addition, we define $\displaystyle m_H(\Sigma_{\infty}) = \lim_{T\rightarrow \infty} m_H(\Sigma_T)$ and assume that $m_H(\Sigma_{\infty}^i)- m_H(\Sigma_{0}^i) \rightarrow 0$, $m_H(\Sigma_{\infty}^i)\rightarrow m > 0$ as $i \rightarrow \infty$, and $M_i$ are uniformly asymptotically flat with respect to the IMCF coordinates then 
\begin{align}
\hat{g}^i \rightarrow g_s
\end{align}
on $\Sigma\times[0,T]$ in $L^2$ with respect to $\delta$.
\end{Cor}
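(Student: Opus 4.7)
The plan is to reduce Corollary \ref{RPICOR} to Theorem \ref{SRPI} by checking that, for each fixed $T \in (0,\infty)$, the hypotheses of that theorem are satisfied uniformly in $i$. The membership $U_T^i \in \mathcal{M}_{r_0,H_0^T,I_0}^{T,H_1,A_1}$ is already given, so what remains is (a) the two Hawking mass hypotheses and (b) a choice of subinterval $[a,b] \subset [0,T]$ on which both $\|Rc^i(\nu,\nu)\|_{W^{1,2}(\Sigma\times [a,b])} \le C$ and $\mathrm{diam}(\Sigma_t^i) \le D$ hold.

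For the mass hypotheses, I would invoke Geroch monotonicity. Because the scalar curvature is nonnegative along the entire flow and $\Sigma_t^i$ is smooth, $t \mapsto m_H(\Sigma_t^i)$ is nondecreasing on $[0,\infty)$, so
\begin{equation*}
m_H(\Sigma_0^i) \;\le\; m_H(\Sigma_T^i) \;\le\; m_H(\Sigma_\infty^i).
\end{equation*}
Combined with the assumptions $m_H(\Sigma_\infty^i) - m_H(\Sigma_0^i) \to 0$ and $m_H(\Sigma_\infty^i) \to m > 0$, this sandwich immediately yields $m_H(\Sigma_T^i) - m_H(\Sigma_0^i) \to 0$ and $m_H(\Sigma_T^i) \to m$, which are precisely the mass conditions of Theorem \ref{SRPI}.

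For the curvature and diameter bounds, I would fix any $[a,b] \subset [0,T]$ with $a$ large enough that the flow has moved well out into the asymptotic region; by uniform asymptotic flatness in the IMCF coordinates, at points of $\Sigma_t^i$ with $t \in [a,b]$ we have $|x| \sim r_0 e^{t/2}$, so \eqref{MetricCond}--\eqref{Der2MetricCond} give uniform pointwise control on $g^i$, $\partial g^i$, $\partial^2 g^i$, and $\partial^3 g^i$. Since the Ricci tensor is an algebraic expression in $g^i$ and its first two derivatives, and $\nabla Rc^i$ involves at most three derivatives of $g^i$, these pointwise bounds pass through to $|Rc^i(\nu,\nu)|$ and $|\nabla Rc^i(\nu,\nu)|$ uniformly in $i$ on $\Sigma \times [a,b]$. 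Because the $\delta$-volume of $\Sigma \times [a,b]$ is finite and independent of $i$, integration gives the required uniform $W^{1,2}$ bound. The diameter bound follows similarly: the induced metric $g^i(\cdot,t)$ is $C^0$-close to the round metric $r_0^2 e^t \sigma$ on $\Sigma$ uniformly in $i$, whose diameter is $\pi r_0 e^{t/2}$, which is bounded on $[a,b]$.

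With the mass, curvature, and diameter hypotheses of Theorem \ref{SRPI} all verified on $\Sigma \times [0,T]$ for each fixed finite $T$, Theorem \ref{SRPI} delivers $\hat{g}^i \to g_s$ in $L^2(\Sigma\times [0,T],\delta)$, which is the desired conclusion. The main technical obstacle is the $W^{1,2}$ bound on $Rc^i(\nu,\nu)$: one must carefully translate the coordinate decay bounds \eqref{Der2MetricCond} into genuine bounds on the Ricci tensor and its covariant derivative in the IMCF frame (keeping track of the Christoffel symbols of the IMCF parametrization, which themselves depend on $H$ and $A$ but are controlled by the $H_1$ and $A_1$ bounds built into $\mathcal{M}_{r_0,H_0^T,I_0}^{T,H_1,A_1}$). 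Everything else is a routine application of Geroch monotonicity and the definition of uniform asymptotic flatness.
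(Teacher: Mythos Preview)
Your proposal is correct and follows essentially the same route as the paper: verify the hypotheses of Theorem \ref{SRPI} (equivalently Corollary \ref{End2}) by using uniform asymptotic flatness to produce the $W^{1,2}$ Ricci bound and the diameter bound on a suitable interval $[a,b]$, then invoke that theorem. Your treatment is in fact more explicit than the paper's in two places: you spell out the Geroch monotonicity sandwich that transfers the mass hypotheses from $\Sigma_\infty^i$ to $\Sigma_T^i$, and you indicate how the decay conditions \eqref{MetricCond}--\eqref{Der2MetricCond} feed into the Ricci bound; the paper simply asserts both. The only point to tighten is that your phrase ``for each fixed $T\in(0,\infty)$'' should really read ``for each fixed $T\ge T_*$'', since you yourself require $a$ large enough to reach the asymptotic region, and that forces $T\ge a$; the paper's Corollary \ref{LTRPI} makes this threshold $T_*$ explicit.
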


\textbf{Note:} If $\Sigma_0$ is a minimizing hull, this theorem also applies to the regions between jumps of the weak formulation of Huisken and Ilmanen if we stay away from the jump times and condition 1 or 2 of Theorem \ref{SPMT} or condition 1 of Theorem \ref{SRPI} is satisfied. In order to see this it is important to remember three important lemmas of Huisken and Ilmanen

\begin{itemize}

\item Smooth flows satisfy the weak formulation in the domain they foliate (Lemma 2.3 \cite{HI}).

\item The weak evolution of a smooth, $H > 0$, strictly minimizing hull is smooth for a short time (Lemma 2.4 \cite{HI}).

\item It can be shown that the weak solution remains smooth until the first moment when either $\Sigma_t \not= \Sigma_t'$, $H \searrow 0$ or $|A| \nearrow \infty$ where $\Sigma_t'$ is the outward minimizing hull of $\Sigma_t$ (Remark after Lemma 2.4 \cite{HI}). This follows since if $H(x,t) \ge H_0 >0$ and $|A|(x,t) \le A_1 < \infty$ then we can apply regularity results of Krylov \cite{K2} in order to achieve $C^{2,\alpha}$ estimates which then imply a continuation result. If $\Sigma_t$ is outward minimizing then we know that the smooth solution agrees with the weak solution.
\end{itemize}

In the future it would be desirable to extend the results of this paper to weak solutions of IMCF as well as develop a method for dealing with the jump regions which are not foliated by weak IMCF. We now give an outline of the rest of the paper.

In Section 2 we will use IMCF to get important estimates of the metric $\hat{g}$ on the foliated region $U_T^i\subset M_i$. The crucial estimates come from the calculation of the monotonicity of the hawking mass in Lemma \ref{CrucialEstimate} which lead to Corollary \ref{GoToZero}. New estimates of length of geodesics and Neumann isporimetric constants under IMCF are obtained in Lemmas \ref{lengthEvolution} and \ref{IsoControl} which eventually lead to showing that mean curvature of $\Sigma_t^i$ converges to its average in $L^2$.

In Section 3 we use the estimates of the previous section along with some new estimates of the metric on $\Sigma_t$ in Lemma \ref{metricEst} to show convergence of $\hat{g}$ to a warped product $g_3^i(x,t) = \frac{r_0^2e^t}{4} dt^2 + r_0^2e^t g^i(x,0)$. This is done by showing convergence of $\hat{g}$ to simpler metrics, successively, until we get to $g_3^i$ and combining this chain of estimates by the triangle inequality. 

In Section 4 we complete the proofs of Theorems \ref{SPMT} and \ref{SRPI} by showing convergence of $g_3^i$ to $\delta$. This will be done under a few different assumptions on IMCF as well as the curvature of $M_i$. These results are combined with the rigidity result of Petersen and Wei \cite{PW}, Theorem \ref{rigidity}, in order to improve from $L^2$ curvature convergence results to $L^2$ metric convergence.

\vspace{0.5cm}

\textbf{Acknowledgments:}
\vspace{0.15cm}

I would like to thank Christina Sormani for bringing this problem to my attention and for many useful suggestions and discussions. I would also like to thank her for organizing seminars at the CUNY Graduate Center which were important in shaping this paper.

\section{Estimates for Manifolds Foliated by IMCF}\label{Sect-Est}

We start by obtaining some useful estimates where it will be important to remember the definition of the Hawking mass defined for a hypersurface $\Sigma^2 \subset M^3$,
\begin{align}
m_H(\Sigma) = \sqrt{\frac{|\Sigma|}{(16\pi)^3}} \left (16 \pi - \int_{\Sigma} H^2 d \mu \right )
\end{align}

\begin{Lem} \label{naiveEstimate}
Let $\Sigma^2 \subset M^3$ be a hypersurface and $\Sigma_t$ it's corresponding solution of IMCF. If $m_1 \le m_H(\Sigma_t) \le m_2$ then 
\begin{align}
|\Sigma_t|&=|\Sigma_0| e^t
\\16 \pi \left (1 - \sqrt{\frac{16 \pi}{|\Sigma_0|}}m_2e^{-t/2}  \right ) &\le \int_{\Sigma_t} H^2 d \mu \le 16 \pi \left (1 - \sqrt{\frac{16 \pi}{|\Sigma_0|}}m_1e^{-t/2}  \right ) \label{Eq-NaiveAvg}
\\ \frac{16 \pi}{|\Sigma_0|} \left (1 - \sqrt{\frac{16 \pi}{|\Sigma_0|}}m_2e^{-t/2}  \right )e^{-t} &\le \dashint_{\Sigma_t} H^2 d \mu \le  \frac{16 \pi}{|\Sigma_0|} \left (1 - \sqrt{\frac{16 \pi}{|\Sigma_0|}}m_1e^{-t/2}  \right )e^{-t}\label{AvgHEst}
\end{align}
where $|\Sigma_t|$ is the $n$-dimensional area of $\Sigma$.

Hence if $m_{H}(\Sigma_T^i) \rightarrow 0$ then 
\begin{align}
\bar{H^2}_i(t):=\dashint_{\Sigma_t^i} H_i^2 d \mu \rightarrow  \frac{4}{r_0}e^{-t}\label{unifAvgHEst1}
\end{align}
for every $t\in [0,T]$.

If $m_{H}(\Sigma_T^i)-m_{H}(\Sigma_0^i) \rightarrow 0$ and $m_{H}(\Sigma_T^i) \rightarrow m > 0$ then  
\begin{align}
\bar{H^2}_i(t):=\dashint_{\Sigma_t^i} H_i^2 d \mu\rightarrow \frac{4}{r_0^2} \left(1-\frac{2}{r_0}m e^{-t/2}\right)e^{-t} \label{unifAvgHEst2}
\end{align}
for every $t\in [0,T]$.
\end{Lem}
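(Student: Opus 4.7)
The plan is to treat this lemma as a direct consequence of two classical ingredients: the area evolution under IMCF, and Geroch's monotonicity of the Hawking mass. I will handle the three displayed bounds first, which are algebraic, and then deduce the two convergence statements by sandwiching $m_H(\Sigma_t^i)$ between values that tend to the same limit.

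First, the area identity $|\Sigma_t|=|\Sigma_0|e^t$ comes from differentiating $|\Sigma_t|=\int_{\Sigma}\sqrt{\det g(t)}$ and using the standard evolution $\partial_t\sqrt{\det g}=\sqrt{\det g}$ for IMCF (since the mean-curvature-weighted normal speed gives $\partial_t g_{ij}=\frac{2}{H}A_{ij}$, whose trace with $g^{ij}$ is $2$). Solving the ODE yields the exponential growth. Next, I would take the definition of the Hawking mass and solve algebraically for $\int_{\Sigma_t}H^2\,d\mu$:
\begin{align}
\int_{\Sigma_t}H^2\,d\mu = 16\pi - \sqrt{\frac{(16\pi)^3}{|\Sigma_t|}}\,m_H(\Sigma_t).
\end{align}
Substituting $|\Sigma_t|=|\Sigma_0|e^t$ gives the factor $\sqrt{\tfrac{16\pi}{|\Sigma_0|}}\,e^{-t/2}$ in front of $m_H$, and inserting the hypothesis $m_1\le m_H(\Sigma_t)\le m_2$ produces the two-sided bound \eqref{Eq-NaiveAvg}. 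Dividing by $|\Sigma_t|=|\Sigma_0|e^t$ immediately produces the bound \eqref{AvgHEst} on the average.

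For the two limit statements, the only additional tool I need is Geroch monotonicity, which applies because the class $\mathcal{M}_{r_0,H_0,I_0}^{T,H_1,A_1}$ is defined to have $R\ge 0$ and is foliated by a smooth IMCF, so $t\mapsto m_H(\Sigma_t^i)$ is nondecreasing. Combined with $m_H(\Sigma_0^i)\ge 0$ from the class definition, this sandwiches
\begin{align}
0 \;\le\; m_H(\Sigma_0^i)\;\le\; m_H(\Sigma_t^i)\;\le\; m_H(\Sigma_T^i)
\end{align}
for every $t\in[0,T]$. If $m_H(\Sigma_T^i)\to 0$ then $m_H(\Sigma_t^i)\to 0$ uniformly in $t$, so I can take $m_1=m_H(\Sigma_0^i)$ and $m_2=m_H(\Sigma_T^i)$ in \eqref{AvgHEst}, pass to the limit, and use $|\Sigma_0|=4\pi r_0^2$ so that $\tfrac{16\pi}{|\Sigma_0|}=\tfrac{4}{r_0^2}$ to obtain \eqref{unifAvgHEst1}. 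If instead $m_H(\Sigma_T^i)\to m>0$ while $m_H(\Sigma_T^i)-m_H(\Sigma_0^i)\to 0$, the same sandwich gives $m_H(\Sigma_t^i)\to m$ for every $t$, and plugging this into \eqref{AvgHEst} with $\sqrt{\tfrac{16\pi}{|\Sigma_0|}}=\tfrac{2}{r_0}$ gives the Schwarzschild-type expression in \eqref{unifAvgHEst2}.

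There is essentially no analytic obstacle here; the only subtlety worth being explicit about is the role of $R\ge 0$, since without it Geroch monotonicity fails and the pointwise-in-$t$ convergence of $\bar{H^2}_i$ could not be extracted from convergence of the Hawking mass at the two endpoints alone. I would therefore take care, when invoking monotonicity, to point the reader back to the hypothesis $R\ge 0$ in Definition \ref{IMCFClass} and to the normalization $|\Sigma_0^i|=4\pi r_0^2$ that fixes the constants $\tfrac{4}{r_0^2}$ and $\tfrac{2}{r_0}$ appearing in the two displayed limits.
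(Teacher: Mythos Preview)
Your proposal is correct and follows the same approach as the paper, which simply states that \eqref{Eq-NaiveAvg} and \eqref{AvgHEst} ``follow directly from the definition of the Hawking mass'' and that \eqref{unifAvgHEst1}, \eqref{unifAvgHEst2} ``follow from \eqref{AvgHEst} and the assumption on the Hawking mass along the sequence.'' You are in fact more explicit than the paper in spelling out the role of Geroch monotonicity in sandwiching $m_H(\Sigma_t^i)$ between its endpoint values, and in tracking the constant $\tfrac{16\pi}{|\Sigma_0|}=\tfrac{4}{r_0^2}$ (the $\tfrac{4}{r_0}$ appearing in \eqref{unifAvgHEst1} is a typo in the statement).
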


\begin{proof}
Equation \ref{Eq-NaiveAvg} and \ref{AvgHEst} follow directly from the definition of the Hawking mass and the first estimate is standard for IMCF.
The equations \ref{unifAvgHEst1} and \ref{unifAvgHEst2} follow from \ref{AvgHEst} and the assumption on the Hawking mass along the sequence.
\end{proof}

\begin{Lem}\label{dtintEstimate}
For any solution of IMCF we have the following formula
\begin{align}
\frac{d}{dt} \int_{\Sigma_t} H^2 d\mu =  \frac{(16 \pi)^{3/2}}{|\Sigma_t|^{1/2}} \left (\frac{1}{2} m_H(\Sigma_t) - \frac{d}{dt}m_H(\Sigma_t) \right )
\end{align}
So if we assume that $m_H(\Sigma_T^i) \rightarrow 0$ as $i \rightarrow \infty$ then we have for a.e. $t \in [0,T]$ that 
\begin{align}
\frac{d}{dt} \int_{\Sigma_t^i} H^2 d\mu \rightarrow 0\label{Eq-dtH^2PMT}
\end{align}

If we assume that $m_H(\Sigma_T^i) - m_H(\Sigma_0^i) \rightarrow 0$ and $m_H(\Sigma_T^i)\rightarrow m > 0$ as $ i \rightarrow \infty$ then we have that 
\begin{align}
\frac{d}{dt} \int_{\Sigma_t^i} H^2 d\mu \rightarrow \frac{16 \pi}{r_0}me^{-t/2} \label{Eq-dtH^2RPI}
\end{align}
\end{Lem}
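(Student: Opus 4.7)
The plan is a direct computation starting from the definition of the Hawking mass. Solving the defining formula for $\int_{\Sigma_t} H^2\, d\mu$ gives
\begin{equation*}
\int_{\Sigma_t} H^2\, d\mu \;=\; 16\pi - \frac{(16\pi)^{3/2}}{|\Sigma_t|^{1/2}}\, m_H(\Sigma_t).
\end{equation*}
Under IMCF we have $|\Sigma_t| = |\Sigma_0|e^t$ from Lemma \ref{naiveEstimate}, so $\frac{d}{dt}|\Sigma_t|^{-1/2} = -\tfrac{1}{2}|\Sigma_t|^{-1/2}$. Differentiating the displayed equation with the product rule then immediately produces the claimed identity.

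For the two asymptotic consequences the key inputs are Geroch's monotonicity of the Hawking mass under smooth IMCF, valid in the class $\mathcal{M}_{r_0,H_0,I_0}^{T,H_1,A_1}$ because $R \geq 0$ and the leaves are connected, giving $\frac{d}{dt}m_H(\Sigma_t^i) \geq 0$, together with the fundamental theorem of calculus
\begin{equation*}
\int_0^T \frac{d}{dt} m_H(\Sigma_t^i)\, dt \;=\; m_H(\Sigma_T^i) - m_H(\Sigma_0^i).
\end{equation*}
In case 1, the hypothesis $m_H(\Sigma_T^i) \to 0$ combined with $0 \leq m_H(\Sigma_0^i) \leq m_H(\Sigma_T^i)$ forces $m_H(\Sigma_0^i) \to 0$ as well; monotonicity then squeezes $m_H(\Sigma_t^i) \to 0$ uniformly in $t$, while the nonnegative integrand $\frac{d}{dt} m_H(\Sigma_t^i)$ tends to $0$ in $L^1([0,T])$, hence a.e.\ on $[0,T]$ after passing to a subsequence. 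Substituting both limits into the identity yields \eqref{Eq-dtH^2PMT}. In case 2, the hypothesis $m_H(\Sigma_T^i) - m_H(\Sigma_0^i) \to 0$ gives the same $L^1$ conclusion for $\frac{d}{dt}m_H$, while monotonicity combined with $m_H(\Sigma_T^i) \to m$ squeezes $m_H(\Sigma_t^i) \to m$ for every fixed $t \in [0,T]$. Evaluating the prefactor with $|\Sigma_0| = 4\pi r_0^2$ gives $(16\pi)^{3/2}/|\Sigma_t|^{1/2} = 32\pi e^{-t/2}/r_0$; multiplying by $m/2$ produces the right-hand side of \eqref{Eq-dtH^2RPI}.

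There is no real obstacle here beyond bookkeeping; the only mildly subtle point is that a.e.\ convergence of $\frac{d}{dt} m_H(\Sigma_t^i)$ is obtained only after extracting a subsequence from the $L^1$ convergence, not for the entire sequence. This matches exactly the ``for a.e.\ $t$'' phrasing of the lemma and is harmless for the later uses of \eqref{Eq-dtH^2PMT} and \eqref{Eq-dtH^2RPI} in the paper.
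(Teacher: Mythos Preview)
Your proof is correct and follows essentially the same route as the paper: derive the identity by differentiating the defining relation between $m_H(\Sigma_t)$ and $\int_{\Sigma_t}H^2\,d\mu$ (you solve for the integral first and then differentiate, the paper differentiates $m_H$ first and then rearranges---these are the same one-line computation), and then use Geroch monotonicity together with $\int_0^T \frac{d}{dt}m_H(\Sigma_t^i)\,dt \to 0$ to force the derivative to zero a.e. Your remark that the a.e.\ convergence of $\frac{d}{dt}m_H(\Sigma_t^i)$ comes only along a subsequence from the $L^1$ convergence is in fact a small point on which you are more careful than the paper itself.
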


\begin{proof}
By using the formula for the hawking mass we can compute that
\begin{align}
\frac{d}{dt}m_H(\Sigma_t)&= \frac{1}{2}m_H(\Sigma_t) - \sqrt{\frac{|\Sigma_t|}{(16\pi)^3}} \frac{d}{dt}\int_{\Sigma} H^2 d \mu \label{Eq-dtInt}
\end{align}
Rearranging this equation by solving for $\frac{d}{dt}\int_{\Sigma} H^2 d \mu $ we find the first formula in the statement of the lemma.

By Geroch Monotonicity we know that $\frac{d}{dt}m_H(\Sigma_t) \ge 0$ and so if $m_H(\Sigma_t^i)\rightarrow 0$ as $i \rightarrow \infty$ then we must have that $ \frac{d}{dt}m_H(\Sigma_t^i) \rightarrow 0$ for almost every $t \in [0,T]$. Combining with \eqref{Eq-dtInt} shows that $\frac{d}{dt}\int_{\Sigma_t^i}H_i^2 d\mu \rightarrow 0$ for almost every $t \in [0,T]$.

If $m_H(\Sigma_T^i) - m_H(\Sigma_0^i) \rightarrow 0$ as $ i \rightarrow \infty$ then we have that $\int_0^T \frac{d}{dt}m_H(\Sigma_t)dt \rightarrow 0$ and so by Geroch monotonicity we must have that $\frac{d}{dt}m_H(\Sigma_t) \rightarrow 0$ for almost every $t \in [0,T]$. Then by combining with the assumption that $m_H(\Sigma_T^i)\rightarrow m$ as $ i \rightarrow \infty$ we get the desired result in this case.
\end{proof}

\begin{Lem}\label{CrucialEstimate}Let $\Sigma^2\subset M^3$ be a compact, connected surface with corresponding solution to IMCF $\Sigma_t$. Then we find the crucial estimate
\begin{align}
m_H(\Sigma_t)\left (\frac{(16 \pi)^{3/2}}{2|\Sigma_t|^{1/2}}\right ) \ge \frac{d}{dt} \int_{\Sigma_t} H^2 d\mu + \int_{\Sigma_t}2\frac{|\nabla H|^2}{H^2} +\frac{1}{2}(\lambda_1-\lambda_2)^2 + R d\mu \label{Eq-Crux}
\end{align}
which can be rewritten and integrated to find
\begin{align}
m_H(\Sigma_T)-m_H(\Sigma_0)  \ge \int_0^T \frac{|\Sigma_t|^{1/2}}{(16 \pi)^{3/2}} \left (\int_{\Sigma_t}2\frac{|\nabla H|^2}{H^2} +\frac{1}{2}(\lambda_1-\lambda_2)^2 + R d\mu \right)dt
\end{align}
\end{Lem}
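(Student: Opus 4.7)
The proof is essentially the classical Geroch monotonicity calculation for IMCF, with the inequality arising from Gauss--Bonnet on a possibly higher-genus surface. My plan is to differentiate $\int_{\Sigma_t} H^2\,d\mu$ directly, substitute the IMCF evolution equations, and then use the Gauss equation plus Gauss--Bonnet to recognize the Hawking mass on the left.

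First I would record the standard IMCF evolutions: the speed $1/H$ gives $\partial_t\, d\mu = d\mu$ and
\begin{align*}
\partial_t H \;=\; -\Delta\!\left(\tfrac{1}{H}\right) - \frac{|A|^2 + Rc(\nu,\nu)}{H}.
\end{align*}
Combining these,
\begin{align*}
\frac{d}{dt}\int_{\Sigma_t} H^2\, d\mu \;=\; \int_{\Sigma_t}\Bigl(2H\,\partial_t H + H^2\Bigr)\,d\mu \;=\; \int_{\Sigma_t}\Bigl(-2H\,\Delta(1/H) - 2|A|^2 - 2 Rc(\nu,\nu) + H^2\Bigr)\,d\mu.
\end{align*}
Integration by parts converts the first term into $-2\int |\nabla H|^2/H^2\,d\mu$, which produces the gradient term in the asserted inequality.

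Next I would repackage $|A|^2$ and $Rc(\nu,\nu)$ in terms of $\lambda_1-\lambda_2$, $H$, $R$, and the intrinsic Gauss curvature $K$. Using $|A|^2=\tfrac12 H^2+\tfrac12(\lambda_1-\lambda_2)^2$ and the Gauss equation $K_{12}=K-\lambda_1\lambda_2=K-\tfrac{H^2}{4}+\tfrac{(\lambda_1-\lambda_2)^2}{4}$ together with $2Rc(\nu,\nu)=R-2K_{12}$, the integrand becomes
\begin{align*}
-2\frac{|\nabla H|^2}{H^2} - \tfrac12(\lambda_1-\lambda_2)^2 - R - \tfrac12 H^2 + 2K.
\end{align*}
Applying Gauss--Bonnet in the form $\int_{\Sigma_t} 2K\,d\mu = 4\pi\chi(\Sigma_t)\le 8\pi$ (this is where equality is lost if $\Sigma_t$ is not a sphere) and noting the algebraic identity
\begin{align*}
8\pi - \tfrac12\int_{\Sigma_t} H^2\,d\mu \;=\; \frac{(16\pi)^{3/2}}{2|\Sigma_t|^{1/2}}\,m_H(\Sigma_t),
\end{align*}
which comes directly from the definition of the Hawking mass, yields exactly the crucial estimate \eqref{Eq-Crux}.

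For the integrated form, I would combine \eqref{Eq-Crux} with Lemma \ref{dtintEstimate} to eliminate $\frac{d}{dt}\int H^2 d\mu$: this converts the inequality into Geroch monotonicity
\begin{align*}
\frac{d}{dt} m_H(\Sigma_t) \;\ge\; \frac{|\Sigma_t|^{1/2}}{(16\pi)^{3/2}}\int_{\Sigma_t}\Bigl(2\tfrac{|\nabla H|^2}{H^2}+\tfrac12(\lambda_1-\lambda_2)^2+R\Bigr)\,d\mu,
\end{align*}
and then integration in $t$ over $[0,T]$ gives the stated inequality. The only real subtlety is the bookkeeping in steps above — in particular, being careful about signs when doing the integration by parts and tracking the $-\tfrac12 H^2+2K$ terms so that precisely the Hawking mass combination emerges on the left — so I expect no genuine obstacle beyond clean computation.
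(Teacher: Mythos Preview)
Your proposal is correct and follows essentially the same route as the paper: both compute $\frac{d}{dt}\int_{\Sigma_t} H^2\,d\mu$ via the IMCF evolution of $H$, integrate by parts, use the Gauss equation and $|A|^2=\tfrac12 H^2+\tfrac12(\lambda_1-\lambda_2)^2$ to rewrite the integrand, apply Gauss--Bonnet with $\chi(\Sigma_t)\le 2$, and recognize the Hawking mass combination. The integrated form is likewise obtained in the paper exactly as you suggest, by combining with Lemma~\ref{dtintEstimate} to get the Geroch monotonicity inequality for $\frac{d}{dt}m_H(\Sigma_t)$ and then integrating over $[0,T]$.
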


\begin{proof}
We will use the following facts in the derivation below where $R$ is the scalar curvature of $M$ and $K$ is the Gauss curvature of $\Sigma_t$.
\begin{align}
\frac{R}{2} &= Rc(\nu,\nu) +K - \frac{1}{2}(H^2 - |A|^2)
\\ |A|^2 &= \frac{1}{2}H^2 + \frac{1}{2} (\lambda_1-\lambda_2)^2
\\ \int_{\Sigma_t} K d\mu_t &= 2 \pi \chi(\Sigma_t)
\end{align}
which follow from the Gauss equations, the definition of $|A|^2$ and the Gauss-Bonnet theorem.

Now we compute the time derivative of $\int_{\Sigma_t}H^2 d \mu$
\begin{align}
\frac{d}{dt} \int_{\Sigma_t} H^2 d \mu_t &= \int_{\Sigma_t} 2 H \frac{\partial H}{\partial t} + H^2 d \mu_t
\\&=\int_{\Sigma_t} -2H \Delta \left ( \frac{1}{H} \right ) - 2 |A|^2 -2  Rc(\nu,\nu) + H^2 d \mu_t \label{Eq-dtFirst}
\\ &=\int_{\Sigma_t}-2\frac{|\nabla H|^2}{H^2} -|A|^2 - R + 2K  d \mu_t
\\&= 4 \pi  \chi(\Sigma_t) +\int_{\Sigma_t}-2\frac{|\nabla H|^2}{H^2} - \frac{1}{2}(\lambda_1-\lambda_2)^2 - R  -\frac{1}{2}H^2 d \mu_t
\\&\le m_H(\Sigma_t) \frac{(16 \pi)^{3/2}}{2|\Sigma_t|^{1/2}}+ \int_{\Sigma_t}-2\frac{|\nabla H|^2}{H^2} - \frac{1}{2}(\lambda_1-\lambda_2)^2 - R d \mu_t \label{Eq-lastCruc}
\end{align}
where we are using that $\chi(\Sigma_t)\le 2$ for compact, connected surfaces.
Rearranging \eqref{Eq-lastCruc} we find that
\begin{align}
m_H(\Sigma_t) \frac{(16 \pi)^{3/2}}{|\Sigma_t|^{1/2}} \ge \frac{d}{dt} \int_{\Sigma_t} H^2 d\mu + \int_{\Sigma_t}2\frac{|\nabla H|^2}{H^2} +\frac{1}{2}(\lambda_1-\lambda_2)^2 + R d\mu
\end{align}
Now by combining with Lemma \ref{dtintEstimate} we find 
\begin{align}
\frac{d}{dt}m_H(\Sigma_t)  \ge \frac{|\Sigma_t|^{1/2}}{(16 \pi)^{3/2}} \int_{\Sigma_t}2\frac{|\nabla H|^2}{H^2} +\frac{1}{2}(\lambda_1-\lambda_2)^2 + R d\mu
\end{align}
and then by integrating both sides from $0$ to $T$ we find the desired estimate.
\end{proof}

By combining Lemma \ref{CrucialEstimate} with Lemma \ref{dtintEstimate} we are able to deduce the crucial estimates below which we will show leads to a stability of positive mass theorem.

\begin{Cor} \label{GoToZero}Let $\Sigma^i\subset M^i$ be a compact, connected surface with corresponding solution to IMCF $\Sigma_t^i$. If $m_H(\Sigma_0)\ge0$ and $m_H(\Sigma^i_T) \rightarrow 0$  then for almost every $t \in [0,T]$ we have that
\begin{align}
&\int_{\Sigma_t^i} \frac{|\nabla H_i|^2}{H_i^2}d \mu \rightarrow 0 \hspace{1 cm} \int_{\Sigma_t^i} (\lambda_1^i-\lambda_2^i)^2d \mu \rightarrow 0\hspace{1 cm} \int_{\Sigma_t^i} R^i d \mu \rightarrow 0
\\ &\int_{\Sigma_t^i} Rc^i(\nu,\nu)d \mu \rightarrow 0 \hspace{.6 cm} \int_{\Sigma_t^i} K_{12}^id \mu \rightarrow 0 \hspace{2.1 cm} \int_{\Sigma_t^i} H_i^2 d\mu\rightarrow 16\pi \label{RicciEstimate}
\\&\int_{\Sigma_t^i} |A|_i^2 d \mu \rightarrow 8 \pi\hspace{1 cm} \int_{\Sigma_t^i} \lambda_1^i\lambda_2^i d \mu \rightarrow 4\pi \hspace{2 cm} \chi(\Sigma_t^i) \rightarrow 2
\end{align}
as $i \rightarrow \infty$ where $K_{12}$ is the ambient sectional curvature tangent to $\Sigma_t$. Since $\chi(\Sigma_t^i)$ is discrete we see by the last convergence that $\Sigma_t^i$ must eventually become topologically a sphere. 

If $\left (m_H(\Sigma^i_T)-m_H(\Sigma^i_0) \right ) \rightarrow 0$ where $m_H(\Sigma_T) \rightarrow m > 0$ then the first three integrals listed above $\rightarrow 0$  and for almost every $t \in [0,T]$ we have that
\begin{align}
 &\int_{\Sigma_t^i} H_i^2 d\mu\rightarrow 16 \pi \left (1 - \sqrt{\frac{16 \pi}{|\Sigma_0|}}me^{-t/2}  \right )
 \hspace{0.5 cm} \int_{\Sigma_t^i} |A|_i^2 d \mu \rightarrow 8 \pi \left (1 - \sqrt{\frac{16 \pi}{|\Sigma_0|}}me^{-t/2}  \right ) \hspace{0.5 cm}
 \\& \int_{\Sigma_t^i} \lambda_1^i\lambda_2^i d \mu \rightarrow 4 \pi \left (1 - \sqrt{\frac{16 \pi}{|\Sigma_0|}}me^{-t/2}  \right )\hspace{0.5 cm}\int_{\Sigma_t^i}Rc^i(\nu,\nu)d\mu \rightarrow -\frac{8\pi}{r_0}m e^{-t/2}
 \\ &\int_{\Sigma_t^i}K_{12}^i d\mu \rightarrow \frac{8\pi}{r_0}m e^{-t/2}\hspace{3 cm}\chi(\Sigma_t^i) \rightarrow 2
\end{align}
Since $\chi(\Sigma_t^i)$ is discrete we see by the last convergence that $\Sigma_t^i$ must eventually become topologically a sphere.
\end{Cor}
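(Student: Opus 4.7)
The strategy is to combine the monotonicity inequality of Lemma \ref{CrucialEstimate} with the algebraic and topological identities assembled in its proof. The class $\mathcal{M}_{r_0,H_0,I_0}^{T,H_1,A_1}$ enforces $R \ge 0$ and $H > 0$, so the three quantities $2|\nabla H|^2/H^2$, $\tfrac{1}{2}(\lambda_1-\lambda_2)^2$, and $R$ are each pointwise nonnegative; this nonnegativity is what makes the integrated bound convertible to separate control on each term.

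In both the PMT and RPI settings I first produce $m_H(\Sigma_T^i)-m_H(\Sigma_0^i)\to 0$: in the RPI case this is the hypothesis, while in the PMT case it follows because $m_H(\Sigma_0^i) \ge 0$ together with Geroch monotonicity squeezes the initial mass between $0$ and $m_H(\Sigma_T^i) \to 0$. Lemma \ref{CrucialEstimate} then forces the $[0,T]$-integral of the nonnegative integrand to vanish, and since $|\Sigma_t^i| = 4\pi r_0^2 e^t$ is uniformly bounded on $[0,T]$, each of the three inner integrands tends to zero in $L^1([0,T],dt)$. Extracting a common subsequence (in the same spirit as the a.e.\ statement of Lemma \ref{dtintEstimate}) yields the first three claimed a.e.\ limits.

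Next I read off $\int H_i^2 d\mu$ directly from Lemma \ref{naiveEstimate} using $|\Sigma_0^i|=4\pi r_0^2$, so that $\sqrt{16\pi/|\Sigma_0^i|}=2/r_0$. The purely algebraic identities $|A|^2=\tfrac{1}{2}H^2+\tfrac{1}{2}(\lambda_1-\lambda_2)^2$ and $H^2-|A|^2=2\lambda_1\lambda_2$ then deliver the limits of $\int |A|_i^2 d\mu$ and $\int \lambda_1^i\lambda_2^i d\mu$ by substitution. For $\int Rc^i(\nu,\nu) d\mu$ and $\int K_{12}^i d\mu$, I invoke the 3-dimensional decomposition $R = 2\,Rc(\nu,\nu)+2K_{12}$ together with the Gauss equation $K=K_{12}+\lambda_1\lambda_2$ and Gauss--Bonnet $\int_{\Sigma_t^i} K\,d\mu = 2\pi\chi(\Sigma_t^i)$; this reduces both integrals to combinations of quantities whose limits are already known, provided one has first shown $\chi(\Sigma_t^i) \to 2$.

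For the Euler characteristic, I rearrange the calculation already performed inside the proof of Lemma \ref{CrucialEstimate} into
\begin{equation*}
4\pi\,\chi(\Sigma_t) = \frac{d}{dt}\int_{\Sigma_t} H^2\, d\mu + \int_{\Sigma_t} \left(2\frac{|\nabla H|^2}{H^2} + \tfrac{1}{2}(\lambda_1-\lambda_2)^2 + R + \tfrac{1}{2}H^2\right) d\mu,
\end{equation*}
and pass to the limit using Lemma \ref{dtintEstimate} for the $\frac{d}{dt}\int H^2\,d\mu$ term, the three a.e.\ integrand limits just established, and Lemma \ref{naiveEstimate} for $\int H^2 d\mu$. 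A direct substitution yields $4\pi\,\chi(\Sigma_t^i) \to 8\pi$ in both regimes (the $m$-dependent terms cancel in the RPI case), hence $\chi(\Sigma_t^i) \to 2$, and by integrality of $\chi$ on compact connected surfaces this forces $\chi(\Sigma_t^i)=2$ eventually, so $\Sigma_t^i$ is a topological sphere. The main subtlety is purely bookkeeping: converting the $L^1$-in-$t$ vanishing from Lemma \ref{CrucialEstimate} into a.e.\ pointwise statements via a subsequence extraction and then keeping the Gauss identities consistent; once both are in hand, each individual limit drops out by algebra.
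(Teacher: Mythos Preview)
Your proposal is correct and follows essentially the same approach as the paper: both combine the integrated monotonicity of Lemma \ref{CrucialEstimate} (using nonnegativity of each summand) with Lemma \ref{dtintEstimate} and Lemma \ref{naiveEstimate}, and then unwind the Gauss equation, the identity $|A|^2=\tfrac12 H^2+\tfrac12(\lambda_1-\lambda_2)^2$, and Gauss--Bonnet. The only cosmetic difference is the order of the last deductions: the paper first isolates $\int Rc^i(\nu,\nu)\,d\mu$ directly from the evolution equation $\frac{d}{dt}\int H^2\,d\mu$ and obtains $\chi\to 2$ last via $\int K=\int(\lambda_1\lambda_2+K_{12})$, whereas you first extract $\chi\to 2$ from the $4\pi\chi$-form of that same evolution identity and then back out $\int K_{12}^i$ and $\int Rc^i(\nu,\nu)$ from Gauss--Bonnet and $R=2Rc(\nu,\nu)+2K_{12}$.
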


\begin{proof}
The first three integrals converge to $0$ by Lemma \ref{CrucialEstimate} \eqref{Eq-Crux} so now we will show how to deduce the last three. Using the calculation in \ref{CrucialEstimate}   we can rewrite \eqref{Eq-dtFirst} as
\begin{align}
\frac{d}{dt} \int_{\Sigma_t^i} H_i^2 d \mu_t &=\int_{\Sigma_t^i} -2\frac{|\nabla H_i|^2}{H_i^2} - (\lambda_1^i-\lambda_2^i)^2 -2  Rc^i(\nu,\nu)  d \mu_t
\end{align}
which implies that the integral of $Rc(\nu,\nu) \rightarrow 0$ for almost every $t \in [0,T]$ since every other integral in that expression $\rightarrow 0$ for almost every $t \in [0,T]$. Then we can write

\begin{align}
\int_{\Sigma_t^i} K_{12}^i d\mu &= \int_{\Sigma_t^i} \frac{1}{2} \left (R^i-2  Rc^i(\nu,\nu) \right) d \mu
\end{align}
which implies that the integral of $K_{12}^i \rightarrow 0$ for almost every $t \in [0,T]$. Then going back to \eqref{Eq-dtFirst} we find

\begin{align}
\frac{d}{dt} \int_{\Sigma_t^i} H_i^2 d \mu_t &=\int_{\Sigma_t^i} -2\frac{|\nabla H_i|^2}{H_i^2} - 2|A|_i^2 + H_i^2 -2  Rc^i(\nu,\nu)  d \mu_t \label{Eq-dtInterm}
\end{align}
which implies that for almost every $t \in [0,T]$ we have that
\begin{align}
\int_{\Sigma_t^i} \frac{1}{2}H_i^2 -|A|_i^2 d \mu_t \rightarrow 0
\end{align}
as $i \rightarrow \infty$, which when combined with Lemma \ref{naiveEstimate}\eqref{Eq-NaiveAvg} implies the desired result. 

Lastly we notice
\begin{align}
&\int_{\Sigma_t^i} \lambda_1^i\lambda_2^i d \mu =\int_{\Sigma_t^i} \frac{1}{2}(H_i^2 - |A|_i^2) \rightarrow 4\pi
\end{align}
and so
\begin{align}
2 \pi \chi(\Sigma_t^i) = \int_{\Sigma_t^i} K^i d\mu = \int_{\Sigma_t^i}\lambda_1^i\lambda_2^i + K_{12}^i  d\mu \rightarrow 4\pi
\end{align}
The convergence results if we assume $\left (m_H(\Sigma^i_T)-m_H(\Sigma^i_0) \right ) \rightarrow 0$ follow similarly using Lemma \ref{naiveEstimate} in order to find specifically what $\int_{\Sigma_t^i} |A|_i^2 d \mu$ or $\int_{\Sigma_t^i} \lambda_1^i\lambda_2^i d \mu$ converge to. To find what $Rc^i(\nu,\nu)$ and $K_{12}^i$ converge to we use Lemma \ref{dtintEstimate} \eqref{Eq-dtH^2RPI} which tells us that for almost every $t \in [0,T]$ we have that $\frac{d}{dt}\int_{\Sigma_t^i}H_i^2 d \mu \rightarrow \frac{16 \pi}{r_0} m e^{-t/2}$ and by combining this with \eqref{Eq-dtInterm} we find
\begin{align}
\int_{\Sigma_t^i} Rc^i(\nu,\nu)  d \mu_t&=\int_{\Sigma_t^i} -\frac{|\nabla H_i|^2}{H_i^2} - |A|_i^2 + \frac{1}{2}H_i^2     d \mu_t -\frac{1}{2}\frac{d}{dt} \int_{\Sigma_t^i} H_i^2 d \mu_t
\end{align}
Then since $\int_{\Sigma_t^i}|A|_i^2 d \mu dt \rightarrow -8 \pi  - \frac{8 \pi}{r_0} m e^{-t/2}$ we find that $\int_{\Sigma_t^i} Rc(\nu,\nu)d\mu d_t \rightarrow - \frac{8 \pi}{r_0} m e^{-t/2}$ and hence $\int_{\Sigma_t^i} K_{12}^id\mu d_t \rightarrow  \frac{8 \pi}{r_0} m e^{-t/2}$ and so
\begin{align}
2 \pi \chi(\Sigma_t^i) = \int_{\Sigma_t^i} K^i d\mu = \int_{\Sigma_t^i}\lambda_1^i\lambda_2^i + K_{12}^i  d\mu \rightarrow 4\pi
\end{align}
\end{proof}

In order for the integral quantities above to be useful to us we need to ensure that no collapsing of regions of $\Sigma_t^i$ can occur as $i\rightarrow \infty$. We will accomplish this by proving lower bounds on the isoperimetric constant which we define below. We will also use the sobolev constant to deduce useful information from the integral of the gradient of the mean curvature.

We start by defining the Neumann $\alpha-$Isoperimetric constant and the Neumann $\alpha-$Sobolev constant of a compact manifold without boundary which can be found in Peter Li's book \cite{Li}.

\begin{Def} The Neumann $\alpha-$Isoperimetric constant and the Neumann $\alpha-$Sobolev constant of a compact manifold without boundary are defined as
\begin{align}
IN_{\alpha}(\Sigma)&= \inf_{\mathclap{\substack{\partial S_1 = \gamma = \partial S_2\\ \Sigma= S_1 \cup \gamma \cup S_2}}} \hspace{0.65cm} \frac{L(\gamma)}{\min{\{|S_1|,|S_2|}\}^{1/\alpha}}
\\SN_{\alpha}(\Sigma)&=\inf_{f \in H_{1,1}(\Sigma)} \frac{\int_{\Sigma}|\nabla f| d \mu}{\left ( \inf_{k \in \R} \int_{\Sigma} |f-k|^{\alpha} \right ) ^{1/\alpha}}
\end{align}
where $L(\gamma)$ represents the length of the curve $\gamma$ which separates $\Sigma$ into two pieces $S_1$ and $S_2$.
\end{Def}

Now one can show that the geometric constant and the analytic constant are essentially equivalent. The proof of the following lemma can be found in Peter Li's Geometric Analysis book \cite{Li}, Theorem 9.6 and Corollary 9.7.

\begin{Thm}(Li \cite{Li}) \label{Iso=Sob} Let $\Sigma$ be a compact Riemannian manifold without boundary then we have that
\begin{align}
 \min \{1, 2^{1-1/\alpha} \}SN_{\alpha}(\Sigma) \le IN_{\alpha}(\Sigma)\le \max \{1, 2^{1-1/\alpha} \}SN_{\alpha}(\Sigma)
\end{align}
Also, if we define $\lambda_1(\Sigma)$ to be the first non-zero Neumann eigenvalue for the Laplacian then we find the following bound due to Cheeger
\begin{align}
\lambda_1(\Sigma) \ge \frac{IN_1(\Sigma)^2}{4}
\end{align}
\end{Thm}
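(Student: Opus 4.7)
The plan is to follow the standard coarea/Cheeger strategy used in Peter Li's book, splitting into the two inequalities of the equivalence and then a separate argument for the Cheeger bound.

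First I would prove $SN_\alpha(\Sigma) \cdot \min\{1,2^{1-1/\alpha}\} \le IN_\alpha(\Sigma)$ by a coarea argument. Given any $f \in H_{1,1}(\Sigma)$, choose $k$ to be a median of $f$, so that both $\{f>k\}$ and $\{f<k\}$ have measure $\le |\Sigma|/2$. Writing $f-k = f_+ - f_-$ with disjoint supports each of measure at most $|\Sigma|/2$, the coarea formula gives
\begin{align*}
\int_\Sigma |\nabla f_\pm|\,d\mu = \int_0^\infty |\{f_\pm = s\}|\,ds,
\end{align*}
and for every $s>0$ the level set $\{f_\pm = s\}$ separates $\Sigma$ into pieces the smaller of which has measure $\le |\{f_\pm > s\}| \le |\Sigma|/2$. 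Thus by the definition of $IN_\alpha$,
\begin{align*}
|\{f_\pm = s\}| \ge IN_\alpha(\Sigma)\,|\{f_\pm > s\}|^{1/\alpha}.
\end{align*}
A layer-cake/Minkowski-integral inequality then converts $\int_0^\infty |\{f_\pm > s\}|^{1/\alpha} ds$ into a lower bound for $\bigl(\int f_\pm^\alpha\bigr)^{1/\alpha}$, and summing the two signs produces $\bigl(\int |f-k|^\alpha\bigr)^{1/\alpha}$ up to the factor $\min\{1,2^{1-1/\alpha}\}$ coming from the convexity/concavity of $x \mapsto x^{1/\alpha}$ applied to the split between $f_+$ and $f_-$. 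Taking infimum over $k$ gives the claimed inequality.

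For the reverse inequality $IN_\alpha(\Sigma) \le \max\{1, 2^{1-1/\alpha}\} SN_\alpha(\Sigma)$, I would plug in test functions approximating characteristic functions of the isoperimetric minimizers. Given any decomposition $\Sigma = S_1 \cup \gamma \cup S_2$ with $|S_1|\le |S_2|$, set
\begin{align*}
f_\epsilon(x) = \max\{0,\, 1 - \epsilon^{-1}\,\mathrm{dist}(x, S_1)\},
\end{align*}
which is Lipschitz, equals $1$ on $S_1$, vanishes outside an $\epsilon$-neighborhood of $\gamma$, and satisfies $\int_\Sigma |\nabla f_\epsilon|\,d\mu \to L(\gamma)$ as $\epsilon \to 0$ by the standard tubular-neighborhood computation. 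On the other hand $\inf_k \int |f_\epsilon - k|^\alpha$ is realized by a value $k \in [0,1]$ and tends to a quantity comparable to $\min\{|S_1|,|S_2|\}$ up to the symmetrization factor $\max\{1,2^{1-1/\alpha}\}$. Taking $\epsilon\to 0$ and then infimum over separating $\gamma$ yields the bound.

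Finally, for Cheeger's inequality $\lambda_1(\Sigma) \ge IN_1(\Sigma)^2/4$, let $u$ be a first non-zero Neumann eigenfunction with $\Delta u = -\lambda_1 u$ and $\int_\Sigma u\,d\mu = 0$. Choose a constant $c$ so that $v = u - c$ satisfies $|\{v>0\}|, |\{v<0\}| \le |\Sigma|/2$, and let $w = v_+$. Since the support of $w^2$ has measure $\le |\Sigma|/2$, the median of $w^2$ is $0$, so the already-proved inequality $SN_1(\Sigma) \ge IN_1(\Sigma)$ (the $\alpha=1$ case) gives
\begin{align*}
IN_1(\Sigma) \int_\Sigma w^2\,d\mu \le \int_\Sigma |\nabla w^2|\,d\mu = 2\int_\Sigma w\,|\nabla w|\,d\mu \le 2\Bigl(\int_\Sigma w^2\Bigr)^{1/2}\Bigl(\int_\Sigma |\nabla w|^2\Bigr)^{1/2}
\end{align*}
by Cauchy-Schwarz. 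Squaring, using the Rayleigh-quotient bound $\int |\nabla w|^2 \le \lambda_1 \int w^2$ (which follows by integrating against the eigenvalue equation on $\{v>0\}$ and discarding a boundary term), and repeating the argument for $v_-$ gives $IN_1(\Sigma)^2 \le 4\lambda_1(\Sigma)$. The main obstacle is the bookkeeping of the constants $\min/\max\{1, 2^{1-1/\alpha}\}$, which arise from the asymmetry between "smaller of two pieces" in $IN_\alpha$ and "optimal constant $k$" in $SN_\alpha$, and the careful verification that the Rayleigh quotient of $w=v_+$ is still bounded by $\lambda_1$ despite $w$ not being a global eigenfunction.
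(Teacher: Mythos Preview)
The paper does not actually prove this theorem; it is quoted from Li's book \cite{Li} (Theorem 9.6 and Corollary 9.7 there) and used as a black box. Your outline reproduces the standard coarea/test-function argument from that reference, so in that sense it matches the ``paper's proof'' exactly.

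One technical point in your Cheeger argument deserves attention. You shift the eigenfunction by a median $c$, set $v=u-c$, $w=v_+$, and then claim $\int|\nabla w|^2 \le \lambda_1 \int w^2$ ``by integrating against the eigenvalue equation on $\{v>0\}$ and discarding a boundary term.'' But $\Delta v = -\lambda_1(v+c)$, so integrating by parts on $\{v>0\}$ gives
\[
\int_{\{v>0\}} |\nabla v|^2 \;=\; \lambda_1 \int_{\{v>0\}} v^2 \;+\; \lambda_1 c \int_{\{v>0\}} v,
\]
and the extra term need not be nonpositive. The usual fix is not to shift at all: since $\int_\Sigma u\,d\mu = 0$, both $\{u>0\}$ and $\{u<0\}$ are nonempty, and one of them (say $\{u>0\}$) already has measure at most $|\Sigma|/2$. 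Working directly with $u_+$ one gets the clean identity $\int|\nabla u_+|^2 = \lambda_1 \int u_+^2$ because $u=0$ on the nodal set, and the rest of your argument goes through. With this adjustment your proposal is correct.
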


Theorem \ref{Iso=Sob} will be useful to us since we will be able to control the isoperimetric constant of $\Sigma_t^i$ using IMCF evolution equations which will then imply control of the Sobolev constant of $\Sigma_t^i$. We start by calculating the evolution of lengths of curves in $\Sigma_t^i$.

\begin{Lem}\label{lengthEvolution}If $\Sigma_t$ is a solution of IMCF where $0 < H_0 \le H(x,t) \le H_1 < \infty$ and $|A|(x,t) \le A_0 < \infty$, and $\gamma(s) \subset \Sigma$ is a smooth, simple, closed curve then
\begin{align}
L^0(\gamma(s)) e^{-\frac{2A_0}{H_0}t} \le L^t(\gamma(s)) \le L^0(\gamma(s)) e^{\frac{2A_0}{H_0}t}
\end{align}
where $L^t(\gamma(s))$ represents the length of $\gamma$ with respect to the metric of $\Sigma_t$.
\end{Lem}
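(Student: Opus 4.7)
The plan is to reduce the claim to an ODE comparison (Gronwall-type) argument using the IMCF evolution equation for the induced metric $g(t)$ on $\Sigma_t$. Recall that for IMCF one has
\begin{align}
\frac{\partial g_{ij}}{\partial t} = \frac{2 h_{ij}}{H},
\end{align}
where $h$ is the second fundamental form of $\Sigma_t$ and $H$ is its mean curvature. This is the only geometric input I will need.

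First, I would parameterize $\gamma$ by a parameter $s$ independent of $t$ (the flow moves the underlying surface in $M$, but it is natural to pull back and view $\gamma$ as a fixed curve in $\Sigma$). Then $L^t(\gamma) = \int \sqrt{g_t(\gamma'(s),\gamma'(s))}\, ds$, and differentiating under the integral sign gives
\begin{align}
\frac{d}{dt} L^t(\gamma) = \int \frac{h(\gamma',\gamma')}{H\,\sqrt{g_t(\gamma',\gamma')}}\, ds.
\end{align}
Since $h$ is a symmetric $(0,2)$-tensor on $\Sigma_t$ whose eigenvalues are the principal curvatures $\lambda_1,\lambda_2$, we have the pointwise operator bound $|h(X,X)| \le |A|\, g_t(X,X)$, and since $0 < H_0 \le H$ and $|A| \le A_0$ we get
\begin{align}
\left|\frac{h(\gamma',\gamma')}{H\,\sqrt{g_t(\gamma',\gamma')}}\right| \le \frac{A_0}{H_0}\sqrt{g_t(\gamma',\gamma')}.
\end{align}
Substituting this bound into the integral yields the differential inequality
\begin{align}
\left|\frac{d}{dt} L^t(\gamma)\right| \le \frac{2A_0}{H_0}\, L^t(\gamma),
\end{align}
where the factor $2$ is absorbed as a safe over-estimate (one could alternatively first derive an exponential bound on $g_t(\gamma',\gamma')$ itself via $|\partial_t g_{ij}| \le \frac{2A_0}{H_0} g_{ij}$ in the quadratic-form sense, then take square roots and integrate).

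Finally, a standard application of Gronwall's inequality to the upper and lower bounds
\begin{align}
-\frac{2A_0}{H_0}\, L^t(\gamma) \le \frac{d}{dt} L^t(\gamma) \le \frac{2A_0}{H_0}\, L^t(\gamma)
\end{align}
delivers the two-sided estimate
\begin{align}
L^0(\gamma)\, e^{-\frac{2A_0}{H_0} t} \le L^t(\gamma) \le L^0(\gamma)\, e^{\frac{2A_0}{H_0} t}
\end{align}
as required. There is no serious obstacle here; the only subtlety worth being careful about is tracking the difference between bounding $g_t(\gamma',\gamma')$ versus its square root, and making sure the operator-norm estimate $|h(X,X)| \le |A|\, g(X,X)$ is invoked correctly so the constants match the statement.
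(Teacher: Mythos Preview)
Your argument is correct and follows essentially the same route as the paper: differentiate $L^t(\gamma)$ using the IMCF metric evolution $\partial_t g = \tfrac{2A}{H}$, bound the integrand via $|A(\gamma',\gamma')|\le A_0\,g_t(\gamma',\gamma')$ and $H\ge H_0$, and integrate the resulting differential inequality. Your own computation in fact yields the sharper constant $A_0/H_0$ before you relax it to $2A_0/H_0$; the paper arrives at $2A_0/H_0$ directly because of a dropped factor of $\tfrac12$ when differentiating the square root, so your care with that step is well placed.
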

\begin{proof}
Let $\gamma(s) \subset \Sigma$ be a smooth, simple, closed curve and define $L^t(\gamma(s)) = \int_{\gamma} \sqrt{g_t(\gamma',\gamma')}ds$ where $g_t$ is the metric on $\Sigma$ induced from $\Sigma_t \subset M$. Then we calculate the evolution
\begin{align}
\frac{d}{dt} L^t(\gamma(s)) &= \int_{\gamma} \frac{\partial}{\partial t}\sqrt{g_t(\gamma',\gamma')}ds
\\&=\int_{\gamma} \frac{\frac{\partial g_t}{\partial t}}{\sqrt{g_t(\gamma',\gamma')}}ds
\\&=\int_{\gamma} \frac{2A(\gamma',\gamma')}{H\sqrt{g_t(\gamma',\gamma')}}ds
\\&\ge -\int_{\gamma} \frac{2A_0 g(\gamma',\gamma')}{H_0\sqrt{g_t(\gamma',\gamma')}}ds = -\frac{2A_0}{H_0}L^t(\gamma(s))
\end{align}
where the estimate then follows by integrating and the upper bound follows similarly.
\end{proof}

We will now use Lemma \ref{lengthEvolution} in order to control the isoperimetric constant of $\Sigma_t^i$.

\begin{Lem} \label{IsoControl}If $\Sigma_t$ is a solution of IMCF where $0 < H_0 \le H(x,t) \le H_1 < \infty$ and $|A|(x,t) \le A_0 < \infty$ then
\begin{align}
IN_{\alpha}(\Sigma_0) e^{\left (-\frac{2A_0}{H_0}-\frac{1}{\alpha}\right )t} \le IN_{\alpha}(\Sigma_t) \le IN_{\alpha}(\Sigma_0) e^{\left (\frac{2A_0}{H_0}-\frac{1}{\alpha}\right )t}
\end{align}
\end{Lem}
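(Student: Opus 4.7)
The plan is to compare the isoperimetric ratio $L^t(\gamma)/\min\{|S_1|_t,|S_2|_t\}^{1/\alpha}$ at time $t$ with the same ratio at time $0$, using Lemma \ref{lengthEvolution} for the numerator and an analogous area evolution for the denominator, then take the infimum over separating curves.

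First I would establish that for any measurable sub-region $S \subset \Sigma$, the area evolves as $|S|_t = |S|_0 \, e^t$ under IMCF. This follows from differentiating the induced area element: since $\frac{\partial g_{ij}}{\partial t} = \frac{2A_{ij}}{H}$, the trace identity gives $\frac{\partial}{\partial t}\sqrt{\det g_t} = \frac{1}{2}\sqrt{\det g_t}\, g_t^{ij}\frac{\partial g_{ij}}{\partial t} = \sqrt{\det g_t}\cdot\frac{H}{H} = \sqrt{\det g_t}$, so $d\mu_t = e^t d\mu_0$ pointwise, whence $|S|_t = |S|_0 e^t$. In particular this applies to both components $S_1, S_2$ of a decomposition $\Sigma = S_1\cup\gamma\cup S_2$.

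Next, fix a smooth simple closed curve $\gamma\subset\Sigma$ separating $\Sigma$ into $S_1$ and $S_2$. Applying Lemma \ref{lengthEvolution} to bound $L^t(\gamma)$ above by $L^0(\gamma)e^{\frac{2A_0}{H_0}t}$ and using $\min\{|S_1|_t,|S_2|_t\}^{1/\alpha} = \min\{|S_1|_0,|S_2|_0\}^{1/\alpha}\,e^{t/\alpha}$, we obtain
\begin{align*}
\frac{L^t(\gamma)}{\min\{|S_1|_t,|S_2|_t\}^{1/\alpha}} \;\le\; \frac{L^0(\gamma)}{\min\{|S_1|_0,|S_2|_0\}^{1/\alpha}}\, e^{\left(\frac{2A_0}{H_0}-\frac{1}{\alpha}\right)t}.
\end{align*}
The lower bound of Lemma \ref{lengthEvolution} yields the analogous inequality with $-\frac{2A_0}{H_0}$ in place of $\frac{2A_0}{H_0}$.

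Finally, since the set of admissible separating curves on $\Sigma$ is independent of $t$, taking the infimum over all such $\gamma$ on both sides of each inequality (the exponential factor is a $\gamma$-independent constant) produces
\begin{align*}
IN_\alpha(\Sigma_t) \le IN_\alpha(\Sigma_0)\, e^{\left(\frac{2A_0}{H_0}-\frac{1}{\alpha}\right)t}, \qquad IN_\alpha(\Sigma_t) \ge IN_\alpha(\Sigma_0)\, e^{\left(-\frac{2A_0}{H_0}-\frac{1}{\alpha}\right)t},
\end{align*}
which is the claim. There is no real obstacle here; the only point to verify carefully is the pointwise evolution of the area element, which supplies the $e^t$ factor uniformly for arbitrary sub-regions (not just all of $\Sigma_t$), and the fact that the infimum is taken over the same combinatorial family of curves at each time, so the exponential can be pulled outside the infimum.
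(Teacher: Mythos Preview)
Your proof is correct and follows essentially the same approach as the paper: both use the length bounds from Lemma~\ref{lengthEvolution} for the numerator and the exponential area evolution $|S|_t = |S|_0 e^t$ for the denominator, then pass to the infimum over separating curves. The only cosmetic difference is that the paper differentiates the ratio $L^t(\gamma)/|S_t|^{1/\alpha}$ and integrates the resulting differential inequality, whereas you apply the already-integrated bounds of Lemma~\ref{lengthEvolution} directly and combine with the area factor---these amount to the same computation.
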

\begin{proof}
Let $\gamma(s) \subset \Sigma$ be a smooth, simple, closed curve and define $L^t(\gamma(s)) = \int_{\gamma} \sqrt{g_t(\gamma',\gamma')}ds$ where $g_t$ is the metric on $\Sigma$ induced from $\Sigma_t \subset M$. Then consider $S \subset \Sigma$ s.t. $\gamma = \partial S$ of which there are two choices and the calculation below will not depend on which choice one makes. We define $S_t:=F_t(S)$ and by the fact that $\frac{\partial}{\partial t} d \mu_t = d\mu_t$ we find that $|S_t| = |S_0|e^t$ as we expect for $|\Sigma_t|$. So we can compute
\begin{align}
\frac{d}{dt} \frac{L^t(\gamma(s))}{|S_t|^{1/\alpha}} & = \frac{\frac{d}{dt}L^t(\gamma(s))}{|S_t|^{1/\alpha}} - \frac{1}{\alpha}\frac{L^t(\gamma(s))}{|S_t|^{1/\alpha}} \ge -\left ( \frac{2A_0}{H_0} + \frac{1}{\alpha} \right) \frac{L^t(\gamma(s))}{|S_t|^{1/\alpha}}
\end{align}
where the estimate 
\begin{align}
\frac{L^0(\gamma(s))}{|S_0|^{1/\alpha}} e^{-\left (\frac{2A_0}{H_0}+\frac{1}{\alpha}\right )t} \le \frac{L^t(\gamma(s))}{|S_t|^{1/\alpha}} \le \frac{L^0(\gamma(s))}{|S_0|^{1/\alpha}} e^{\left (\frac{2A_0}{H_0}-\frac{1}{\alpha}\right )t}
\end{align}
follows by integrating and the upper bound follows similarly. Since this is true for all $\gamma \subset \Sigma$ and all $S^1,S^2\subset \Sigma$ s.t. $\partial S^1 = \gamma = \partial S^2$ and so by taking the $\min{\{|S_t^1|,|S_t^2|\}}$ and then taking the inf over all smooth $\gamma \subset \Sigma$ we find the desired result.
\end{proof}

We will now exploit the newly found control on the isoperimetric constant and hence the sobolev constant to extract useful information from the fact that$\int_{\Sigma_t^i} \frac{|\nabla H|^2}{H^2}d \mu \rightarrow 0$.

\begin{Prop}\label{avgH}If $\Sigma_t^i$ is a sequence of IMCF solutions where $\int_{\Sigma_t^i} \frac{|\nabla H|^2}{H^2}d \mu \rightarrow 0$ as $i \rightarrow \infty$, $0 < H_0 \le H(x,t) \le H_1 < \infty$ and $|A|(x,t) \le A_0 < \infty$ then
\begin{align}
\int_{\Sigma_t^i} (H_i - \bar{H}_i)^2 d \mu \rightarrow 0
\end{align}
as $i \rightarrow \infty$ for almost every $t \in [0,T]$ where $\bar{H}_i = \dashint_{\Sigma_t^i}H_id \mu$. 

Let $d\mu_t^i$ be the volume form on $\Sigma$ w.r.t. $g^i(\cdot,t)$ then we can find a parameterization of $\Sigma_t$ so that 
\begin{align}
d\mu_t^i = r_0^2 e^t d\sigma
\end{align}
where $d\sigma$ is the standard volume form on the unit sphere.

Then for almost every $t \in [0,T]$ and almost every $x \in \Sigma$, with respect to $d\sigma$, we have that 
\begin{align}
H_i(x,t) - \bar{H}_i(t) \rightarrow  0
\end{align}
, along a subsequence. 

\end{Prop}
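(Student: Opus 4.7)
The plan is to combine the Neumann--Poincar\'e inequality with the uniform isoperimetric control already established in Lemma \ref{IsoControl} to turn the vanishing of $\int |\nabla H_i|^2/H_i^2 \, d\mu$ into vanishing of $\int (H_i-\bar H_i)^2\, d\mu$, then invoke Moser's theorem on volume forms to fix the stated parameterization, and finally extract an a.e.-convergent subsequence by combining Fubini with the uniform $L^\infty$ bound on $H_i$.

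\textbf{Step 1 (uniform Poincar\'e bound).} Applying Lemma \ref{IsoControl} with $\alpha = 1$ together with the hypothesis $IN_1(\Sigma_0^i) \ge I_0$ built into the class $\mathcal{M}_{r_0,H_0,I_0}^{T,H_1,A_1}$ gives a uniform lower bound
\begin{align}
IN_1(\Sigma_t^i) \ge I_0 \, e^{-(2A_0/H_0 + 1)T}
\end{align}
for all $i$ and all $t \in [0,T]$. Cheeger's inequality in Theorem \ref{Iso=Sob} then bounds the first nonzero Neumann Laplace eigenvalue uniformly from below by some $c > 0$. Since $H_i - \bar H_i$ has zero mean with respect to $d\mu_t^i$, the Poincar\'e inequality combined with $H_i \le H_1$ gives
\begin{align}
\int_{\Sigma_t^i}(H_i - \bar H_i)^2 d\mu \le \frac{1}{c} \int_{\Sigma_t^i} |\nabla H_i|^2 d\mu \le \frac{H_1^2}{c} \int_{\Sigma_t^i} \frac{|\nabla H_i|^2}{H_i^2} d\mu,
\end{align}
which vanishes as $i \to \infty$ for a.e.\ $t \in [0,T]$ by hypothesis.

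\textbf{Step 2 (Moser reparameterization).} For $i$ sufficiently large, Corollary \ref{GoToZero} forces $\Sigma_0^i$ to be topologically $S^2$, and by definition of the class $|\Sigma_0^i| = 4\pi r_0^2 = \int_{S^2} r_0^2 \, d\sigma$. Moser's theorem on volume forms then produces a diffeomorphism $\phi_i : S^2 \to \Sigma_0^i$ with $\phi_i^* d\mu_0^i = r_0^2 \, d\sigma$. Identifying $\Sigma \cong S^2$ through $\phi_i$ at time zero and transporting to all times by the IMCF map $F_t$, the area evolution $\partial_t d\mu_t^i = d\mu_t^i$ integrates to $d\mu_t^i = r_0^2 e^t \, d\sigma$, as claimed.

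\textbf{Step 3 (a.e.\ subsequential convergence).} In the parameterization of Step 2 the bound of Step 1 reads
\begin{align}
\int_{S^2}(H_i(x,t) - \bar H_i(t))^2 r_0^2 e^t d\sigma \to 0 \qquad \text{for a.e.\ } t \in [0,T].
\end{align}
The integrand is pointwise dominated by $4 H_1^2 r_0^2 e^T$, so dominated convergence upgrades this to
\begin{align}
\int_0^T \int_{S^2}(H_i(x,t) - \bar H_i(t))^2 r_0^2 e^t \, d\sigma\, dt \to 0.
\end{align}
This is $L^2$ convergence on the product space, from which a standard extraction yields a subsequence $H_{i_k}(x,t) - \bar H_{i_k}(t) \to 0$ for $(d\sigma \otimes dt)$-a.e.\ $(x,t) \in S^2 \times [0,T]$, which is the last assertion. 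The principal technical point is Step 1: obtaining a quantitatively controlled Poincar\'e constant that is uniform in both $i$ and $t$, which is precisely where the structural bounds defining $\mathcal{M}_{r_0,H_0,I_0}^{T,H_1,A_1}$ (the floor $I_0$ on the isoperimetric constant, the two-sided bound on $H$, and the bound on $|A|$) enter the argument through Lemma \ref{IsoControl} and Cheeger's inequality.
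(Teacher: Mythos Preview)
Your proof is correct and follows essentially the same route as the paper: uniform isoperimetric control from Lemma~\ref{IsoControl} feeds into Cheeger's inequality (Theorem~\ref{Iso=Sob}) to produce a uniform Poincar\'e constant, Moser's theorem fixes the area-preserving parameterization at $t=0$ which propagates via $\partial_t d\mu = d\mu$, and $L^2$ convergence on the product space yields the a.e.\ subsequence. Your write-up is in fact slightly more careful than the paper's in two places: you make explicit the dominated-convergence step passing from a.e.\ $t$ to the integrated bound over $[0,T]$, and you flag (via Corollary~\ref{GoToZero}) why $\Sigma$ may be taken to be a sphere --- though note that this last point technically invokes the Hawking-mass hypothesis rather than the bare hypotheses of the Proposition, which the paper also does implicitly.
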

\begin{proof}
By Lemma \ref{IsoControl} we have uniform control on the isoperimetric constant of $\Sigma_t^i$ and so by Theorem \ref{Iso=Sob} we know that the Sobolev constant of $\Sigma_t^i$ is also controlled and we can use the lower bound on $\lambda_1(\Sigma)$ to control the constant in the Poincare Inequality
\begin{align}
\int_{\Sigma}|\nabla f|^2 \ge \lambda_1(\Sigma)\int_{\Sigma}f^2 d \mu
\end{align}
for $f \in H^{1,2}(\Sigma)$ satisfying $\int_{\Sigma}f d\mu = 0$.

Hence we can calculate
\begin{align}
\int_{\Sigma_t^i} \frac{|\nabla H_i|^2}{H^2_i}d &\ge \frac{1}{H_1^2}\int_{\Sigma_t^i} |\nabla H_i|^2d 
\\&\ge \frac{\lambda_1(\Sigma_t^i)}{H_1^2}\int_{\Sigma_t^i}(H_i - \bar{H}_i)^2 d \mu 
\\&\ge \frac{IN_1(\Sigma_0^i) e^{\left (-\frac{2A_0}{H_0}-1\right )T}}{H_1^2}\int_{\Sigma_t^i}(H_i - \bar{H}_i)^2 d \mu
\\&\ge \frac{I_0 e^{\left (-\frac{2A_0}{H_0}-1\right )T}}{H_1^2}\int_{\Sigma_t^i}(H_i - \bar{H}_i)^2 d \mu
\end{align}
which shows the desired result by applying Lemma \ref{GoToZero}.

Since $\Sigma$ is compact with two measures $d\mu^i_0, r_0^2d \sigma$ of the same area we can use Moser's Theorem \cite{M} to find a diffeomorphism $F^i:S_{r_0} \cong\Sigma\rightarrow \Sigma$ such that for each open set $U \subset \Sigma$ we have that $r_0^2d\sigma(U) = d\mu^i_0(F^i(U))$, i.e. area preserving. Then since $\frac{d}{dt}d\mu_t^i=d\mu_t^i$ we have that $d\mu_t^i = e^t d\mu_0^i$ and if we let $F_t^i$ be the solution of IMCF starting at $F^i$ then $r_0^2e^td\sigma(U) = e^td\mu^i_0(F^i_t(U))=d\mu^i_t(F^i_t(U))$. This means the area preserving diffeomorphism $F^i$ at time $t = 0$ induces an area preserving diffeomorphsim for all times $t \in [0,T]$. 

Then this implies that $\int_0^T\int_{\Sigma}(H_i-\bar{H}_i)^2r_0^2e^td\sigma dt \rightarrow 0$ and hence the pointwise convergence for a.e. $t \in [0,T]$ and for a.e. $x \in \Sigma$, with respect to $d\sigma$, on a subsequence is a well known fact relating $L^2$ convergence to pointwise convergence.
\end{proof}

\textbf{Note:} From now on we will be using the area preserving parameterization, $F_t^i$, of the solution of IMCF, $\Sigma_t$, explained in the proof of \ref{avgH}, which is induced by an area preserving diffeomorphism between $(\Sigma, r_0^2 \sigma)$ and $(\Sigma, g^i(x,0))$.

Now we obtain an estimate which gives us weak convergence of $Rc^i(\nu,\nu)$ which will be used in Section \ref{Sect-End}.

\begin{Lem}\label{WeakRicciEstimate}Let $\Sigma^i_0\subset M^3_i$ be a compact, connected surface with corresponding solution to IMCF $\Sigma_t^i$. Then if $\phi \in C_c^1(\Sigma\times (a,b))$  and $0\le a <b\le T$ we can compute the  estimate
\begin{align}
\int_a^b\int_{\Sigma_t^i}& 2\phi Rc^i(\nu,\nu)d\mu d t=  \int_{\Sigma_a^i} \phi H_i^2 d\mu -  \int_{\Sigma_b^i} \phi H_i^2 d\mu 
\\&+ \int_a^b\int_{\Sigma_t^i}2\phi\frac{|\nabla H_i|^2}{H_i^2}-2\frac{\hat{g}^j(\nabla \phi, \nabla H_i)}{H_i} +\phi(H_i^2-2|A|_i^2) d\mu 
\end{align}

If $m_H(\Sigma^i_T) \rightarrow 0$ and $\Sigma_t$ satisfies the hypotheses of Proposition \ref{avgH} then the estimate above implies 
\begin{align}
\int_a^b\int_{\Sigma_t^i} \phi Rc^i(\nu,\nu)d\mu dt \rightarrow 0
\end{align}
If $m_H(\Sigma^i_T)-m_H(\Sigma^i_0)  \rightarrow 0$, $m_H(\Sigma_T) \rightarrow m > 0$ and $\Sigma_t$ satisfies the hypotheses of Proposition \ref{avgH}  then the estimate above implies 
\begin{align}
\int_a^b\int_{\Sigma_t}& 2\phi Rc^i(\nu,\nu)d\mu d t \rightarrow 2\int_a^b\int_{\Sigma_t} \frac{-8\pi}{r_0}me^{-t/2} \phi d\mu dt
\end{align}
\end{Lem}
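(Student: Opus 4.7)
The plan is to derive the identity by starting from the pointwise evolution of $H^2$ under IMCF, multiplying by the test function $\phi$, and integrating by parts in both space and time; then to pass to the limit under each set of hypotheses using Corollary \ref{GoToZero} and Lemma \ref{dtintEstimate}.

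\textbf{Step 1 (the identity).} From the intermediate computation in Lemma \ref{CrucialEstimate} (before the divergence-theorem step that produces \eqref{Eq-dtFirst}), the squared mean curvature evolves pointwise as
\begin{align*}
\partial_t(H^2)\;=\;-2H\Delta\!\left(\tfrac{1}{H}\right)-2|A|^2-2Rc(\nu,\nu).
\end{align*}
Solving for $2\phi Rc(\nu,\nu)$, multiplying by $\phi$, and integrating against $d\mu_t\,dt$ over $\Sigma\times[a,b]$ gives the starting point. The spatial Laplacian term is rewritten via Green's identity on the closed surface $\Sigma_t$: writing $\nabla(\phi H)=H\nabla\phi+\phi\nabla H$ and $\nabla(1/H)=-\nabla H/H^2$ turns $\int_{\Sigma_t}-2\phi H\Delta(1/H)\,d\mu$ into a sum of a cross term $\hat{g}^i(\nabla\phi,\nabla H)/H$ and a $\phi|\nabla H|^2/H^2$ contribution. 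The time derivative is handled via $\partial_t d\mu_t = d\mu_t$ together with the product rule
\begin{align*}
\tfrac{d}{dt}\!\int_{\Sigma_t}\!\phi H^2\,d\mu \;=\;\int_{\Sigma_t}\!\big[(\partial_t\phi)H^2+\phi\,\partial_t(H^2)+\phi H^2\big]\,d\mu,
\end{align*}
and the fundamental theorem of calculus in $t$, which produces the boundary contributions $\int_{\Sigma_a}\phi H^2\,d\mu-\int_{\Sigma_b}\phi H^2\,d\mu$ and transfers the residual $\phi H^2$ from $\partial_t d\mu$ into the combined $\phi(H^2-2|A|^2)$ term of the identity. Collecting everything yields the stated formula.

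\textbf{Step 2 (PMT limit).} Under $m_H(\Sigma_T^i)\to 0$, Corollary \ref{GoToZero} gives $\int_{\Sigma_t^i}|\nabla H_i|^2/H_i^2\,d\mu\to 0$ and $\int_{\Sigma_t^i}(\lambda_1^i-\lambda_2^i)^2\,d\mu\to 0$ for a.e.\ $t$. Since $\phi$ is bounded, the first yields $\int\int\phi|\nabla H|^2/H^2\to 0$ by bounded convergence in $t$. The cross term is handled by Cauchy--Schwarz,
\begin{align*}
\left|\int_{\Sigma_t^i}\frac{\hat{g}^i(\nabla\phi,\nabla H_i)}{H_i}\,d\mu\right|\le \|\nabla\phi\|_{L^2(\Sigma_t^i)}\left\|\frac{\nabla H_i}{H_i}\right\|_{L^2(\Sigma_t^i)},
\end{align*}
where the second factor vanishes and the first is uniformly controlled via the metric estimates built into $\mathcal{M}_{r_0,H_0,I_0}^{T,H_1,A_1}$. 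The identity $2|A|^2-H^2=(\lambda_1-\lambda_2)^2$ converts $\phi(H^2-2|A|^2)$ into $-\phi(\lambda_1-\lambda_2)^2$, which vanishes in $L^1$. Compact support of $\phi$ in $\Sigma\times(a,b)$ kills the boundary terms, so the right-hand side tends to $0$.

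\textbf{Step 3 (RPI limit).} Under the RPI hypotheses, Corollary \ref{GoToZero} again gives $\int|\nabla H|^2/H^2\to 0$ and $\int(\lambda_1-\lambda_2)^2\to 0$, so the cross term, the $\phi|\nabla H|^2/H^2$ term, and $\phi(H^2-2|A|^2)$ still vanish in the limit. The surviving contribution comes from the time-derivative piece: rather than absorbing $\partial_t H^2$ entirely into the boundary terms, one retains it and invokes \eqref{Eq-dtH^2RPI} from Lemma \ref{dtintEstimate}, giving $\tfrac{d}{dt}\!\int_{\Sigma_t^i} H_i^2\,d\mu\to \tfrac{16\pi}{r_0}me^{-t/2}$. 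Since $H^2$ converges to its average $\bar{H^2}$ by Proposition \ref{avgH}, one may replace spatial $\phi$-weighting by averaging and obtain precisely $2\int_a^b\!\int_{\Sigma_t}\phi\cdot(-8\pi m/r_0)e^{-t/2}\,d\mu\,dt$ as the limit of the right-hand side.

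\textbf{Main obstacle.} The principal subtlety is organizational: the two cases require slightly different forms of the same identity. In the PMT case one integrates $\partial_t H^2$ by parts in time all the way to boundary terms that vanish by compact support, whereas in the RPI case one must stop short of this — keeping the $\partial_t H^2$ contribution so as to feed it to \eqref{Eq-dtH^2RPI} and capture the Schwarzschild-type correction $\propto me^{-t/2}$ that would otherwise be lost to cancellation. A secondary technical point is ensuring uniform control on $\|\nabla\phi\|_{L^2(\Sigma_t^i)}$ along the sequence, which comes from the metric bounds and the area-preserving parameterization of Proposition \ref{avgH}.
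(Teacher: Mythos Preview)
Your Steps 1 and 2 follow the paper's argument and are correct; the identification $H^2-2|A|^2=-(\lambda_1-\lambda_2)^2$ is a clean way to dispose of that term via Corollary \ref{GoToZero}. One bookkeeping point: the identity as displayed in the lemma omits the term $\int_a^b\int_{\Sigma_t}(\partial_t\phi)H_i^2\,d\mu\,dt$ that necessarily appears once you integrate the product rule in $t$; the paper's proof keeps it explicitly. In the PMT case this extra term also tends to zero, since $\int_{\Sigma_t}(\partial_t\phi)H_i^2\,d\mu\to 16\pi\int_\Sigma\partial_t\phi\,d\sigma$ and integrating over $[a,b]$ gives zero by compact support --- but it must be tracked, and your Step 1 loses it between the product rule and ``collecting everything''.

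Your Step 3 has a genuine gap. You propose to ``retain $\partial_t(H^2)$'' and feed it to \eqref{Eq-dtH^2RPI}, but \eqref{Eq-dtH^2RPI} concerns the \emph{unweighted} quantity $\tfrac{d}{dt}\int_{\Sigma_t}H^2\,d\mu$, not $\int_{\Sigma_t}\phi\,\partial_t(H^2)\,d\mu$. Your bridge --- ``$H^2$ converges to its average, so replace spatial $\phi$-weighting by averaging'' --- is legitimate when applied to $H^2$ but not to its time derivative: $L^2$ convergence of $H_i^2$ to $\bar{H^2}(t)$ gives no control whatsoever on $\partial_t(H_i^2)$. The paper does the opposite of what you describe: in \emph{both} cases it integrates by parts in $t$ all the way, moving the time derivative onto $\phi$. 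The boundary terms vanish by compact support, leaving $\int_a^b\int_{\Sigma_t}(\partial_t\phi)H_i^2\,d\mu\,dt$. Now the limit is honest --- $\partial_t\phi$ is a fixed test function and $H_i^2\to\bar{H^2}_\infty(t)$ in $L^2$ --- and after passing to the limit one integrates by parts in $t$ once more, now against the explicit function $16\pi(1-\tfrac{2m}{r_0}e^{-t/2})$, to produce the $-\tfrac{8\pi}{r_0}me^{-t/2}$ factor. So the PMT and RPI cases do not differ in \emph{where} one stops the integration by parts; they differ only in whether the limiting profile $\bar{H^2}_\infty(t)$ is constant in $t$ or carries the Schwarzschild correction.
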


\begin{proof}
Let $\phi \in C^1_c(\Sigma\times(a,b))$ and compute
\begin{align}
&\frac{d}{dt} \int_{\Sigma_t^i}\phi H_i^2 d \mu_t = \int_{\Sigma_t^i} 2 \phi H_i \frac{\partial H_i}{\partial t} + \phi H_i^2 + \frac{\partial \phi}{\partial t}H_i^2d \mu
\\&=\int_{\Sigma_t^i} -2\phi H_i \Delta \left ( \frac{1}{H_i} \right ) - 2 \phi |A|_i^2 -2 \phi Rc^i(\nu,\nu) + \phi H^2_i + \frac{\partial \phi}{\partial t}H_i^2 d \mu
\\ &=\int_{\Sigma_t^i}-2\phi \frac{|\nabla H_i|^2}{H^2} -2\frac{\hat{g}^i(\nabla \phi, \nabla H_i)}{H_i} - 2 \phi |A|_i^2 -2 \phi Rc^i(\nu,\nu) + \phi H_i^2 + \frac{\partial \phi}{\partial t}H_i^2 d \mu \label{Eq-lastWeakRicci}
\end{align}
Now by integrating from $a$ to $b$, $0\le a < b \le T$, and rearranging \eqref{Eq-lastWeakRicci} we find that
\begin{align}
\int_a^b\int_{\Sigma_t}& 2\phi Rc^i(\nu,\nu)d\mu d t=  \int_{\Sigma_a} \phi H_i^2 d\mu -  \int_{\Sigma_b} \phi H_i^2 d\mu 
\\&+ \int_a^b\int_{\Sigma_t}2\phi\frac{|\nabla H_i|^2}{H_i^2}-2\frac{\hat{g}^i(\nabla \phi, \nabla H_i)}{H_i} +\phi(H_i^2-2|A|_i^2) + \frac{\partial \phi}{\partial t}H_i^2d\mu 
\end{align}
Notice if $m_H(\Sigma_t^i) \rightarrow 0$ then Proposition \ref{AvgHEst} combined with the assumptions on $\phi$ implies
\begin{align}
\int_a^b\int_{\Sigma_t} \frac{\partial \phi}{\partial t}H_i^2d\mu dt \rightarrow 16\pi\int_a^b \int_{\Sigma}\frac{\partial \phi}{\partial t}d\sigma dt=16\pi \int_{\Sigma}\phi(x,b) - \phi(x,a) d \sigma = 0
\end{align} 

So by using the results of Proposition \ref{AvgHEst} and Corollary \ref{GoToZero} we find that
\begin{align}
\int_a^b\int_{\Sigma_t}& 2\phi Rc^i(\nu,\nu)d\mu d t \rightarrow 0
\end{align}

Notice if $m_H(\Sigma_T^i)-m_H(\Sigma_0^i) \rightarrow 0$ then 
\begin{align}
\int_a^b\int_{\Sigma_t}& \frac{\partial \phi}{\partial t}H_i^2d\mu dt \rightarrow \int_a^b\int_{\Sigma} \frac{\partial \phi}{\partial t}\left (16 \pi - \frac{32\pi }{r_0}me^{-t/2}\right )d\sigma dt 
\\&=\int_a^b\int_{\Sigma} \frac{\partial }{\partial t}\left( \phi\left (16 \pi - \frac{32 \pi }{r_0}me^{-t/2}\right )\right ) -\frac{16 }{r_0}me^{-t/2}\phi d\sigma dt 
\\&= \int_{\Sigma}\phi(x,b)\left (16 \pi - \frac{32\pi }{r_0}me^{-b/2}\right ) - \phi(x,a)\left (16 \pi - \frac{32\pi }{r_0}me^{-a/2}\right )
\\&- \int_a^b\int_{\Sigma}\frac{16\pi }{r_0}me^{-t/2}\phi d\sigma dt 
\end{align}
 by assumption and the convergence of Proposition \ref{AvgHEst}.
  
 So by using the results of Proposition \ref{AvgHEst} and Corollary \ref{GoToZero} we find that  
\begin{align}
\int_a^b\int_{\Sigma_t}& 2\phi Rc^i(\nu,\nu)d\mu d t \rightarrow 2\int_a^b\int_{\Sigma} \frac{-8\pi}{r_0}me^{-t/2} \phi d\sigma dt
\end{align}

\end{proof}

We end this section with an estimate for the metric of $\Sigma_t^i$ in terms of the bounds on the mean curvature and the second fundamental form.

\begin{Lem} \label{metricEst}Assume that $\Sigma_t^i$ is a solution to IMCF and let $\lambda_1^i(x,t)\le \lambda_2^i(x,t)$ be the eigenvalues of $A^i(x,t)$ then we find
\begin{align}
 e^{\int_0^t\frac{2\lambda^i_1(x,s)}{H^i(x,s)}ds} g^i(x,0) \le g^i(x,t) &\le  e^{\int_0^t\frac{2\lambda^i_2(x,s)}{H^i(x,s)}ds} g^i(x,0) 
\end{align}
\end{Lem}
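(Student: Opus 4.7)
The plan is to diagonalize the problem pointwise on tangent vectors and apply a scalar Gr\"onwall argument. The starting point is the standard evolution equation for the induced metric under a normal flow with speed $1/H$, namely
\begin{align}
\frac{\partial g^i}{\partial t}(x,t) = \frac{2}{H^i(x,t)} A^i(x,t),
\end{align}
which follows from $\partial_t F = \nu/H$ together with the definition of the second fundamental form. This is the key identity; once it is in hand the rest is ODE comparison.

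Next, I would fix a point $x \in \Sigma$ and an arbitrary tangent vector $v \in T_x\Sigma$, and study the scalar function $f(t) := g^i(x,t)(v,v)$. Differentiating and substituting the evolution equation gives
\begin{align}
f'(t) = \frac{2\,A^i(x,t)(v,v)}{H^i(x,t)}.
\end{align}
The definition of the principal curvatures $\lambda_1^i(x,t) \le \lambda_2^i(x,t)$ as eigenvalues of $A^i$ with respect to $g^i(x,t)$ yields the pointwise quadratic form bound
\begin{align}
\lambda_1^i(x,t)\, g^i(x,t)(v,v) \le A^i(x,t)(v,v) \le \lambda_2^i(x,t)\, g^i(x,t)(v,v),
\end{align}
so that
\begin{align}
\frac{2\lambda_1^i(x,t)}{H^i(x,t)} f(t) \le f'(t) \le \frac{2\lambda_2^i(x,t)}{H^i(x,t)} f(t).
\end{align}

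From here Gr\"onwall's inequality (equivalently, integrating $(\log f)'$) produces
\begin{align}
f(0) \exp\!\left(\int_0^t \frac{2\lambda_1^i(x,s)}{H^i(x,s)}\,ds\right) \le f(t) \le f(0) \exp\!\left(\int_0^t \frac{2\lambda_2^i(x,s)}{H^i(x,s)}\,ds\right).
\end{align}
Since $v \in T_x\Sigma$ was arbitrary, this is exactly the claimed inequality between $g^i(x,t)$ and $g^i(x,0)$ in the sense of symmetric bilinear forms on $T_x\Sigma$. There is essentially no obstacle here; the only thing to be slightly careful about is that the eigenvalue bounds must be interpreted with respect to the time-$t$ metric $g^i(x,t)$ (not the initial metric), but that is precisely the content of the definition of $\lambda_1^i,\lambda_2^i$ as the eigenvalues of the Weingarten map at time $t$, so the scalar comparison is consistent throughout the evolution.
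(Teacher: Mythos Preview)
Your proof is correct and follows essentially the same route as the paper: both start from the evolution equation $\partial_t g = \frac{2A}{H}$, sandwich $A$ between $\lambda_1 g$ and $\lambda_2 g$, and integrate the resulting differential inequality. Your version, which fixes a tangent vector $v$ and works with the scalar $f(t)=g^i(x,t)(v,v)$, is in fact a cleaner articulation of what the paper writes as a component inequality $\frac{\partial g_{lm}}{\partial t} \le \frac{2\lambda_2^i}{H^i} g_{lm}$ (which must be read in the sense of quadratic forms, exactly as you do).
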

\begin{proof}
We start with the time derivative of the metric 
\begin{align}
\frac{\partial g_{lm}}{\partial t} &= \frac{2A^i_{lm}(x)}{H_i(x)}\le \frac{2\lambda_2^i(x)}{H_0^i(x)}g_{lm} \le \frac{2A_0}{H_0}g_{lm}
\\ \frac{\partial g_{lm}}{\partial t} &= \frac{2A^i_{lm}(x)}{H_i(x)}\ge \frac{2\lambda_1^i(x)}{H_0^i(x)}g_{lm} \ge \frac{-2A_0}{H_0}g_{lm}
\end{align}
where we are fixing the coordinates on $\Sigma_t$ from the time zero hypersurface $\Sigma_0$. By integrating this differential inequality we get the first set of desired estimates.
\end{proof}

\section{Convergence To A Warped Product} \label{Sect-Conv}

In this section we define the following metrics on $\Sigma \times [0,T]$
\begin{align}
\hat{g}^i(x,t) &= \frac{1}{H^i(x,t)^2} dt^2 + g^i(x,t)
\\g_1^i(x,t)&= \frac{1}{\bar{H}^i(t)^2}dt^2 + g^i(x,t)
\\ g_2^i(x,t) &= \frac{1}{\bar{H}^i(t)^2}dt^2 + e^tg^i(x,0)
\\ g_3^i(x,t) = \frac{r_0^2e^t}{4} dt^2 + e^t g^i(x,0)& \hspace{0.25cm}\text{ or } \hspace{0.25cm}
\\g_3^i(x,t) = \frac{r_0^2}{4}&\left (1 - \frac{2}{r_0} m e^{-t/2} \right )^{-1} dt^2 + e^t g^i(x,0)
\\ \delta(x,t) = \frac{r_0^2e^t}{4} dt^2 + r_0^2e^t \sigma(x) &\hspace{0.25cm}\text{ or }\hspace{0.25cm} 
\\g_s(x,t) = \frac{r_0^2}{4}&\left (1 - \frac{2}{r_0} m e^{-t/2} \right )^{-1} dt^2 + r_0^2e^t \sigma(x)
\end{align}
and successively show the pairwise convergence of the metrics in $L^2$ from $\hat{g}^i(x,t)$ to $g_3^i(x,t)$. By combining all the pairwise convergence results using the triangle inequality we will find that $\hat{g}^i -g_3^i \rightarrow 0$ in $L^2$. In the next section we will complete the desired results by showing the convergence to $\delta$ or $g_s$. 

We start by showing that $\hat{g}^i$ converges to $g_1^i$ by using Proposition \ref{avgH}.

\begin{Thm}\label{gtog1} Let $U_{T}^i \subset M_i^3$ be a sequence such that $U_{T}^i\subset \mathcal{M}_{r_0,H_0,I_0}^{T,H_1,A_1}$ and $m_H(\Sigma_{T}^i) \rightarrow 0$ as $i \rightarrow \infty$ or $m_H(\Sigma_{T}^i)- m_H(\Sigma_{0}^i) \rightarrow 0$ and $m_H(\Sigma_T^i)\rightarrow m > 0$. If we define the metrics 
\begin{align}
\hat{g}^i(x,t) &= \frac{1}{H_i(x,t)^2} dt^2 + g^i(x,t)
\\g^i_1(x,t)&= \frac{1}{\overline{H}_i(t)^2}dt^2 + g^i(x,t)
\end{align}
 on $U_T^i$ then we have that
\begin{align}
\int_{U_T^i}|\hat{g}^i -g^i_1|^2 dV \rightarrow 0 
\end{align}
where $dV$ is the volume form on $U_T^i$. 
\end{Thm}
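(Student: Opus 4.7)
The plan is to observe that $\hat{g}^i$ and $g_1^i$ have identical spatial parts and differ only in the coefficient of $dt^2$, so the tensor difference is purely algebraic:
\begin{align*}
\hat{g}^i - g_1^i = \left(\frac{1}{H_i(x,t)^2} - \frac{1}{\bar H_i(t)^2}\right) dt \otimes dt.
\end{align*}
Therefore, with respect to any reasonable reference metric (say $\delta$, or $\hat g^i$ itself, all of which are uniformly comparable by the hypotheses of $\mathcal M_{r_0,H_0,I_0}^{T,H_1,A_1}$ together with Lemma \ref{metricEst}), the pointwise tensor norm satisfies
\begin{align*}
|\hat{g}^i - g_1^i|^2 \leq C \left(\frac{1}{H_i^2} - \frac{1}{\bar H_i^2}\right)^{\!2} = C\,\frac{(H_i+\bar H_i)^2(H_i-\bar H_i)^2}{H_i^4\bar H_i^4}.
\end{align*}

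Next I would use the uniform bounds $H_0 \le H_i \le H_1$, which also imply $H_0 \le \bar H_i \le H_1$, to conclude that
\begin{align*}
|\hat{g}^i - g_1^i|^2 \leq \frac{4CH_1^2}{H_0^8}\,(H_i-\bar H_i)^2.
\end{align*}
The volume form of $\hat g^i$ is $dV = \frac{1}{H_i}\,dt\wedge d\mu_t^i$, and $\frac{1}{H_i}\le \frac{1}{H_0}$, so Fubini yields
\begin{align*}
\int_{U_T^i} |\hat{g}^i - g_1^i|^2 \, dV \leq C' \int_0^T \int_{\Sigma_t^i} (H_i - \bar H_i)^2 \, d\mu_t \, dt,
\end{align*}
with $C'$ depending only on $H_0, H_1, A_1$ and the choice of reference metric.

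The last step is to pass to the limit in $i$. By Proposition \ref{avgH}, the hypotheses on $m_H(\Sigma_T^i)$ (either vanishing, or convergence to $m>0$ together with $m_H(\Sigma_T^i) - m_H(\Sigma_0^i)\to 0$) guarantee that for almost every $t\in[0,T]$,
\begin{align*}
\int_{\Sigma_t^i} (H_i-\bar H_i)^2 \, d\mu_t \longrightarrow 0 \quad \text{as } i \to \infty.
\end{align*}
The inner integrals are uniformly bounded in $i$ and $t$, since $(H_i - \bar H_i)^2 \le 4H_1^2$ pointwise and $|\Sigma_t^i| = 4\pi r_0^2 e^t \le 4\pi r_0^2 e^T$. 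Thus dominated convergence applies, the double integral vanishes in the limit, and the theorem follows.

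The proof is essentially routine once Proposition \ref{avgH} is available; the only subtlety is the bookkeeping for the tensor norm and the need to invoke dominated convergence in $t$ because Proposition \ref{avgH} gives convergence only for a.e.\ $t$. I do not anticipate a serious obstacle here: the uniform bounds in the definition of $\mathcal M_{r_0,H_0,I_0}^{T,H_1,A_1}$ were designed precisely to make such dominated convergence arguments available.
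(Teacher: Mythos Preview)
Your proposal is correct and follows essentially the same route as the paper: reduce the tensor difference to the $dt^2$ coefficient, use the uniform bounds $H_0\le H_i\le H_1$ to control it by $(H_i-\bar H_i)^2$, and then invoke Proposition~\ref{avgH} together with dominated convergence. The only cosmetic difference is that the paper passes through the pointwise a.e.\ subsequence statement of Proposition~\ref{avgH} and then argues away the subsequence, whereas you use the $L^2$-in-space statement directly and apply dominated convergence in $t$; your version is in fact slightly cleaner.
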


\begin{proof}
We compute
\begin{align}
\int_{U_T^i}|\hat{g}^i -g^i_1|^2 dV &= \int_0^T \int_{\Sigma_t^i} \frac{|\hat{g}^i -g^i_1|^2}{H} d\mu dt 
\\&=\int_0^T \int_{\Sigma_t^i} \frac{1}{H_i} \left | \frac{1}{H_i^2} - \frac{1}{\bar{H}_i^2}\right |^2d\mu dt 
\\&= \int_0^T \int_{\Sigma_t^i} \frac{|\bar{H}_i^2-H_i^2|^2}{H_i^3\bar{H}_i^2} d\mu dt 
\\&\le \frac{1}{H_0^5}\int_0^T \int_{\Sigma_t^i} |\bar{H}_i^2-H_i^2|^2 d\mu dt\rightarrow 0\label{Eq-2Lastgtog1}
\end{align}
where the convergence in \eqref{Eq-2Lastgtog1} follows from the pointwise convergence for almost every $t \in [0,T]$ and almost every $x \in \Sigma_t$, for a subsequence, from Proposition \ref{avgH} as well as the fact that $H_i\le H_1$ and Lebesgue's dominated convergence theorem.

We can get rid of the need for a subsequence by assuming to the contrary that for $\epsilon > 0$ there exists a subsequence so that $\int_{U_T^k}|\hat{g}^k -g^k_1|^2 dV \ge \epsilon$, but this subsequence satisfies the hypotheses of Theorem \ref{gtog1} and hence by what we have just shown we know a subsequence must converge which is a contradiction.
\end{proof}

Now we show the convergence of $g_1^i$ to $g_2^i$.

\begin{Thm}\label{g1tog2} Let $U_{T}^i \subset M_i^3$ be a sequence s.t. $U_{T}^i\subset \mathcal{M}_{r_0,H_0,I_0}^{T,H_1,A_1}$ and $m_H(\Sigma_{T}^i) \rightarrow 0$ as $i \rightarrow \infty$ or $m_H(\Sigma_{T}^i)- m_H(\Sigma_{0}^i) \rightarrow 0$ and $m_H(\Sigma_t^i)\rightarrow m > 0$. If we define the metrics 
\begin{align}
g^i_1(x,t)&= \frac{1}{\overline{H}_i(t)^2}dt^2 + g^i(x,t)
\\g^i_2(x,t)&= \frac{1}{\overline{H}_i(t)^2}dt^2 + e^tg^i(x,0)
\end{align}
 on $U_T^i$ then we have that
\begin{align}
\int_{U_T^i}|g^i_1 -g^i_2|_{g_3^i}^2 dV \rightarrow 0 
\end{align}
where $dV$ is the volume form on $U_T^i$ and the norm is being calculated with respect to the metric $g^i_3(x,t)= \frac{r_0^2}{4}e^tdt^2 + e^tg^i(x,0)$.

Similarly, if we define 
\begin{align}
g^i_{2'}(x,t)= \frac{1}{\overline{H}_i(t)^2}dt^2 + e^{t-T}g^i(x,T)
\end{align}
 on $U_T^i$ then we have that
\begin{align}
\int_{U_T^i}|g^i_1 -g^i_{2'}|_{g_3^i}^2 dV \rightarrow 0 
\end{align}
where $dV$ is the volume form on $U_T^i$.
\end{Thm}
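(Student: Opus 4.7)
The plan is to pointwise bound $|g_1^i - g_2^i|_{g_3^i}$ using the matrix sandwich of Lemma \ref{metricEst}, then reduce the resulting integral on $U_T^i$ to the traceless second fundamental form bound $\int_{\Sigma_t^i}(\lambda_1^i-\lambda_2^i)^2\,d\mu \to 0$ supplied by Corollary \ref{GoToZero}. Since $g_1^i - g_2^i = g^i(x,t) - e^t g^i(x,0)$ has no $dt$ component, its $g_3^i$-norm equals its $e^t g^i(x,0)$-norm. Setting $\alpha_j^i(x,t) = \int_0^t 2\lambda_j^i(x,s)/H^i(x,s)\,ds$, the identity $\lambda_1^i+\lambda_2^i = H^i$ gives $\alpha_1^i+\alpha_2^i=2t$, while $\lambda_1^i\le\lambda_2^i$ gives $\alpha_1^i\le t\le\alpha_2^i$. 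By Lemma \ref{metricEst} and simultaneous diagonalization of the symmetric pair $(g^i(x,0), g^i(x,t))$, the generalized eigenvalues $\mu_1,\mu_2$ of $g^i(x,t)$ relative to $g^i(x,0)$ lie in $[e^{\alpha_1^i}, e^{\alpha_2^i}]$. Computing in a simultaneously diagonalizing frame,
\begin{align*}
|g_1^i - g_2^i|_{g_3^i}^2 = e^{-2t}\bigl((\mu_1 - e^t)^2 + (\mu_2 - e^t)^2\bigr) \le C_1(\alpha_2^i-\alpha_1^i)^2,
\end{align*}
where the mean value theorem together with the uniform bound $|\alpha_j^i|\le 2A_1T/H_0$ produces a constant $C_1 = C_1(A_1,H_0,T)$.

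Next, the area-preserving parameterization of Proposition \ref{avgH} gives $d\mu_t^i = r_0^2 e^t\,d\sigma$ and $dV = H^{-i}(x,t)\,d\mu_t^i\,dt$, so integration yields
\begin{align*}
\int_{U_T^i}|g_1^i - g_2^i|_{g_3^i}^2\,dV \le C_2\int_0^T\int_{\Sigma}(\alpha_2^i-\alpha_1^i)^2\,d\sigma\,dt.
\end{align*}
Applying Cauchy-Schwarz to $\alpha_2^i - \alpha_1^i = \int_0^t 2(\lambda_2^i - \lambda_1^i)/H^i\,ds$ and swapping the order of integration bounds this by a constant multiple of $\int_0^T\int_{\Sigma_t^i}(\lambda_2^i-\lambda_1^i)^2\,d\mu^i\,dt$. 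Corollary \ref{GoToZero} gives $\int_{\Sigma_t^i}(\lambda_1^i-\lambda_2^i)^2\,d\mu^i\to 0$ pointwise a.e.\ in $t$, while $|\lambda_j^i|\le A_1$ and $|\Sigma_t^i|\le 4\pi r_0^2 e^T$ provide a uniform dominating function. The dominated convergence theorem then finishes the first part.

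For the second statement involving $g_{2'}^i$, I would run the identical argument with Lemma \ref{metricEst} integrated backwards from time $T$. Setting $\beta_j^i(x,t) = \int_t^T 2\lambda_j^i/H^i\,ds$, the same derivation yields $e^{-\beta_2^i}g^i(x,T)\le g^i(x,t)\le e^{-\beta_1^i}g^i(x,T)$ with $\beta_1^i+\beta_2^i = 2(T-t)$, so $-\beta_2^i\le t-T\le -\beta_1^i$, and the bound $|g_1^i - g_{2'}^i|^2_{g_3^i} \le C_1'(\beta_2^i - \beta_1^i)^2$ then follows exactly as before.

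The main obstacle is verifying that Lemma \ref{metricEst} gives a matrix (PSD) sandwich rather than merely a scalar eigenvalue estimate. This is handled by a scalar Gr\"onwall argument applied to $f(t) = v^T g^i(x,t) v$ for each fixed coordinate vector $v$: the ODE $\partial_t g_{lm} = 2A^i_{lm}/H^i$ combined with the matrix inequality $A^i \le \lambda_2^i g^i$ gives $f'(t)\le 2\lambda_2^i(x,t)/H^i(x,t)\, f(t)$, whence $f(t)\le f(0)\exp(\alpha_2^i)$ for every $v$, with the lower sandwich symmetric. Once the PSD sandwich is established the remaining estimates are routine bookkeeping.
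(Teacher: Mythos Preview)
Your argument is correct and, in one respect, cleaner than the paper's. Both proofs start from the same matrix sandwich of Lemma~\ref{metricEst} and both reduce the $g_3^i$-norm of $g_1^i-g_2^i$ to the gap between $e^{\alpha_1^i}$ and $e^{\alpha_2^i}$, where $\alpha_j^i=\int_0^t 2\lambda_j^i/H^i\,ds$. The paper then invokes Proposition~\ref{avgH} to get pointwise a.e.\ convergence (on a subsequence) of $H_i\to\bar H_i$ and separately of $\lambda_1^i-\lambda_2^i\to 0$, deduces $2\lambda_p^i/H_i\to 1$ pointwise a.e., applies dominated convergence to the full integrand, and finishes with a subsequence-of-subsequence contradiction to upgrade to the whole sequence. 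Your route bypasses Proposition~\ref{avgH} (and hence the Poincar\'e/isoperimetric machinery behind it) entirely: the algebraic identity $\alpha_1^i+\alpha_2^i=2t$ forces $e^t$ to lie in the sandwich interval, so the mean value theorem gives the quantitative bound $|g_1^i-g_2^i|_{g_3^i}^2\le C(\alpha_2^i-\alpha_1^i)^2$, and Cauchy--Schwarz then reduces everything to the single integral $\int_0^T\!\int_{\Sigma_t^i}(\lambda_1^i-\lambda_2^i)^2\,d\mu\,dt$ from Corollary~\ref{GoToZero}. This avoids both the pointwise extraction and the final subsequence argument. The paper's approach, on the other hand, makes the limiting values of $\lambda_p^i$ and $H_i$ explicit, which is conceptually tied to the broader narrative of Section~\ref{Sect-Conv}. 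Your handling of the PSD sandwich via the scalar Gr\"onwall inequality on $v^\top g^i(x,t)v$ is exactly the content of Lemma~\ref{metricEst}, and the backwards-in-time variant for $g_{2'}^i$ is straightforward as you indicate. (Minor typo: $dV=H^{-i}\,d\mu_t^i\,dt$ should read $dV=H_i^{-1}\,d\mu_t^i\,dt$.)
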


\begin{proof}
We compute
\begin{align}
&\int_{U_T^i}|g^i_1 -g^i_2|^2 dV = \int_0^T \int_{\Sigma_t^i} \frac{|g^i_1 -g^i_2|^2}{H_i} d\mu dt 
\\&=\int_0^T \int_{\Sigma_t^i}e^{-2t}  \frac{|g^i(x,t) -e^tg^i(x,0)|_{g^i(x,0)}^2}{H_i}d\mu dt 
\\&\le \int_0^T \int_{\Sigma_t^i} e^{-2t}\frac{|g^i(x,0)|_{g^i(x,0)}^2}{H_i}\max\{|e^{\int_0^t\frac{2\lambda^i_1(x,s)}{H^i(x,s)}ds} -e^t|^2,|e^{\int_0^t\frac{2\lambda^i_2(x,s)}{H^i(x,s)}ds} -e^t|^2\} d\mu dt 
\\&\le \frac{n^2}{H_0}\int_0^T\int_{\Sigma_t^i} e^{-2t} \max\{|e^{\int_0^t\frac{2\lambda^i_1(x,s)}{H^i(x,s)}ds} -e^t|^2,|e^{\int_0^t\frac{2\lambda^i_2(x,s)}{H^i(x,s)}ds} -e^t|^2\}  d\mu dt \rightarrow 0 \label{Eq-2lastg1tog2}
\end{align} 
where the convergence in \eqref{Eq-2lastg1tog2} follows from Proposition \ref{avgH} since $H_i \rightarrow \bar{H} = \frac{2}{r_0}e^{-t/2}$  and $\lambda_1^i \rightarrow \lambda_2^i$ pointwise almost everywhere with respect to $d \sigma$ along a subsequence. So we have that  $\lambda_p^i(x,t) \rightarrow \frac{1}{r_0}e^{-t/2}$, $p = 1,2$, for almost every $x \in \Sigma_t$ and for almost every $t \in [0,T]$ along a subsequence. This implies that $\frac{2\lambda_p^i(x,t)}{H_i(x,t)} \rightarrow 1$ for almost every $x \in \Sigma_t$ and  for almost every $t \in [0,T]$ along a subsequence. Combining this with the estimate $\frac{2\lambda_p^i}{H_i} \le \frac{A_0}{H_0}$ and Lebesgue's dominated convergence theorem we find the desired convergence above.

We can get rid of the need for a subsequence by assuming to the contrary that for $\epsilon > 0$ there exists a subsequence so that $\int_{U_T^k}|g_1^k -g^k_2|_{g_3^i}^2 dV \ge \epsilon$, but this subsequence satisfies the hypotheses of Theorem \ref{g1tog2} and hence by what we have just shown we know a subsequence must converge which is a contradiction.

We can obtain the convergence result in the case where $m_H(\Sigma_{T}^i)- m_H(\Sigma_{0}^i) \rightarrow 0$ and $m_H(\Sigma_t^i)\rightarrow m$ in a similar fasion by using the estimates of Proposition \ref{avgH} as well as Lemma \ref{GoToZero}.

 Using a similar argument, as well as the time $T$ estimate from Lemma \ref{metricEst}, we can get the second convergence result for $g_{2'}^i$.
\end{proof}

Notice that in Theorem \ref{gtog1} we were able to leverage the results of Proposition \ref{avgH} in order to gain control of the radial portion of the metric $\hat{g}^i$ as $i \rightarrow \infty$. We will further improve on this radial control in Theorem \ref{g2tog3} by using the knowledge that $\bar{H}_i(t)^2 \rightarrow \frac{4}{r_0^2} e^{-t}$ as $i \rightarrow \infty$ to complete the convergence to the warped product $g_3^i$.

\begin{Thm}\label{g2tog3} Let $U_{T}^i \subset M_i^3$ be a sequence s.t. $U_{T}^i\subset \mathcal{M}_{r_0,H_0,I_0}^{T,H_1,A_1}$ and $m_H(\Sigma_{T}^i) \rightarrow 0$ as $i \rightarrow \infty$. If we define the metrics \begin{align}
g^i_2(x,t)&= \frac{1}{\bar{H}^i(t)^2}dt^2 + e^tg^i(x,0)
\\g^i_3(x,t)&= \frac{r_0^2}{4}e^tdt^2 + e^tg^i(x,0)
\end{align}
 on $U_T^i$ then we have that
\begin{align}
\int_{U_T^i}|g^i_2 -g^i_3|^2 dV \rightarrow 0 
\end{align}
where $dV$ is the volume form on $U_T^i$.

Instead, if $m_H(\Sigma_{T}^i)- m_H(\Sigma_{0}^i) \rightarrow 0$ and $m_H(\Sigma_t^i)\rightarrow m > 0$ and we define 
\begin{align}
g^i_3(x,t)= \frac{r_0^2}{4}\left (1 - \frac{2}{r_0} m e^{-t/2} \right )^{-1} e^t dt^2 + e^tg^i(x,0)
\end{align} 
on $U_T^i$ then we have that
\begin{align}
\int_{U_T^i}|g^i_2 -g^i_3|^2 dV \rightarrow 0 
\end{align}
where $dV$ is the volume form on $U_T^i$.
\end{Thm}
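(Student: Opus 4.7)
The plan is first to observe that $g_2^i$ and $g_3^i$ share the same spatial part $e^t g^i(x,0)$, so the tensor difference $g_2^i - g_3^i = \alpha_i(t)\,dt^2$ reduces to a purely radial quantity with scalar coefficient
\begin{align*}
\alpha_i(t) \;=\; \frac{1}{\bar{H}_i(t)^2} - c(t),
\end{align*}
where $c(t) = \tfrac{r_0^2}{4}e^t$ in the case $m_H(\Sigma_T^i)\to 0$ and $c(t) = \tfrac{r_0^2}{4}\lp 1 - \tfrac{2m}{r_0}e^{-t/2}\rp^{-1} e^t$ in the case $m_H(\Sigma_T^i) - m_H(\Sigma_0^i) \to 0$ with $m_H(\Sigma_T^i)\to m>0$. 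Following the computation at the start of Theorem \ref{gtog1}, the volume form on $U_T^i$ is $\tfrac{1}{H_i}\,d\mu\,dt$; since $H_i\ge H_0$ and $|\Sigma_t^i|=4\pi r_0^2 e^t$ is bounded on $[0,T]$, the quantity to control is bounded above, up to a harmless constant depending only on $r_0,H_0,H_1,T$, by
\begin{align*}
\int_0^T \alpha_i(t)^2\, dt.
\end{align*}

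The heart of the argument is thus to prove $\bar{H}_i(t)^2 \to 1/c(t)$ pointwise for a.e.\ $t\in[0,T]$. For this I use the elementary variance identity
\begin{align*}
\overline{H_i^2}(t) \;=\; \bar{H}_i(t)^2 \;+\; \dashint_{\Sigma_t^i}(H_i - \bar{H}_i)^2\, d\mu,
\end{align*}
obtained by expanding $(H_i-\bar{H}_i+\bar{H}_i)^2$ and averaging (the cross term vanishes since $\bar{H}_i$ is the mean). Lemma \ref{naiveEstimate} gives pointwise a.e.\ convergence of the left-hand side to $\tfrac{4}{r_0^2}e^{-t}$ in the PMT case, and to $\tfrac{4}{r_0^2}\lp 1 - \tfrac{2m}{r_0}e^{-t/2}\rp e^{-t}$ in the RPI case. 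Proposition \ref{avgH}, together with the uniform two-sided bound on $|\Sigma_t^i|$, forces the second term on the right to vanish for a.e.\ $t$. Hence $\bar{H}_i(t)^2$ has the same pointwise limit as $\overline{H_i^2}(t)$, and passing to reciprocals yields $\alpha_i(t)\to 0$ a.e.\ in $t$.

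The a priori bound $H_0\le H_i\le H_1$ gives $\tfrac{1}{H_1^2}\le \tfrac{1}{\bar{H}_i(t)^2}\le \tfrac{1}{H_0^2}$, so $\alpha_i(t)^2$ is uniformly bounded on $[0,T]$. Lebesgue dominated convergence then yields $\int_0^T \alpha_i(t)^2\, dt\to 0$, first along the subsequence extracted in Proposition \ref{avgH}. I upgrade to convergence of the full sequence by the same contradiction argument used in Theorems \ref{gtog1} and \ref{g1tog2}: if some subsequence kept the integral bounded below by $\varepsilon>0$, that subsequence would still satisfy the hypotheses, and by what was just proved a further subsequence of it would converge to $0$, contradiction.

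The only subtle step is the bridging identity connecting $\overline{H_i^2}(t)$ to $\bar{H}_i(t)^2$ via the $L^2$-variance supplied by Proposition \ref{avgH}; once that is in place, the whole statement collapses to a one-variable dominated convergence argument, with no further geometric input needed beyond Lemma \ref{naiveEstimate} and Proposition \ref{avgH}.
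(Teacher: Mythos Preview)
Your proof is correct and follows the same overall route as the paper: both observe that $g_2^i-g_3^i$ is a purely radial tensor, reduce the $L^2$ integral to a one--variable integral in $t$ involving $\bigl|\tfrac{1}{\bar H_i(t)^2}-c(t)\bigr|$, and finish with dominated convergence using the uniform bounds $H_0\le H_i\le H_1$ and $|\Sigma_t^i|=4\pi r_0^2 e^t$.

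The one place you go beyond the paper is the variance identity $\overline{H_i^2}=\bar H_i^{\,2}+\dashint(H_i-\bar H_i)^2\,d\mu$. The paper's argument simply invokes Lemma~\ref{naiveEstimate} (equation~\eqref{unifAvgHEst1}) at the final step, but that lemma controls the mean of $H_i^2$, not the square of the mean of $H_i$, which is what actually appears in $g_2^i$. Your insertion of the variance identity together with Proposition~\ref{avgH} makes this passage rigorous; the paper leaves it implicit. Otherwise the two arguments are the same.
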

\begin{proof}
We calculate
\begin{align}
\int_{U_T^i}|\hat{g}_2^i -g^i_3|^2 dV &= \int_0^T \int_{\Sigma_t^i} \frac{|\hat{g}_2^i -g^i_3|^2}{H} d\mu dt 
\\&=\int_0^T \int_{\Sigma_t^i} \frac{1}{H} \left |  \frac{1}{\bar{H}^2}- \frac{r_0^2}{4}e^{t}\right |d\mu dt 
\\&= \int_0^T \int_{\Sigma_t^i} \frac{r_0^2}{4}e^t\frac{|\frac{4}{r_0^2}e^{-t}-\bar{H}^2|}{H\bar{H}^2} d\mu dt 
\\&\le \frac{r_0^2|\Sigma_0|}{H_0^34}\int_0^T e^{2t} |\frac{4}{r_0^2}e^{-t}-\bar{H}^2| dt\rightarrow 0 \label{Eq-2last1}
\end{align}
where the convergence in \eqref{Eq-2last1} follows from Lemma \ref{naiveEstimate}, \eqref{unifAvgHEst1}.
 
Since this argument is solely concerned with the $dt^2$ part of the metric the argument does not change at all for the convergence of the metrics $g_{2'}^i$ and $g_{3'}^i$. Also, in the case where $m_H(\Sigma_t) \rightarrow m$ the proof is very similar where we use that $\bar{H}^2 \rightarrow \frac{4}{r_0^2}\left (1 - \frac{2}{r_0} m e^{-t/2} \right )$ from Lemma \ref{naiveEstimate}.
\end{proof}

\section{Convergence to Flat/Schwarschild Metric}\label{Sect-End}

In this section we will complete the proofs of Theorems \ref{SPMT} and \ref{SRPI} under a few different assumptions. One should note that the results of the last section are enough to prove Theorems \ref{SPMT} and \ref{SRPI} in the rotationally symmetric case due to the fact that in that case we know that $(\Sigma, g^i(x,t))$ must be a round sphere by assumption. Of course, the stronger Sormani-Wenger Intrinsic Flat (SWIF) convergence has already been shown in the rotationally symmetric case by Lee and Sormani \cite{LS1}, and LeFloch and Sormani \cite{LeS}. It is also interesting that the extra assumptions of Theorems \ref{SPMT} and \ref{SRPI} are not needed for the results of the last section giving $L^2$ convergence to the warped product $g_3^i$ without the $W^{1,2}$ bound on the Ricci curvature.

In the more general case addressed by Theorems \ref{SPMT} and \ref{SRPI} we need to show that $(\Sigma, g^i(x,t))$ converges to a round sphere. In this section we will be able to show that the Gauss curvature of $\Sigma_t^i$ converges to that of a round sphere and so in order to complete the proofs of Theorems \ref{SPMT} and \ref{SRPI} we will need the following rigidity result of Petersen and Wei (\cite{PW}, Corollary 1.5) which allows us to go from, Gauss curvature of $\Sigma_t^i$ converging to a constant, to, $g^i(x,t)$ converging to $r_0^2e^t\sigma(x)$ in $C^{\alpha}$.

\begin{Cor}(Petersen and Wei \cite{PW})\label{rigidity}
Given any integer $n \ge 2$, and numbers $p > n/2$, $\lambda \in \R$, $v >0$, $D < \infty$, one can find $\epsilon = \epsilon(n,p,\lambda, D) > 0$ such that a closed Riemannian $n-$manifold $(\Sigma,g)$ with
\begin{align}
&\text{vol}(\Sigma)\ge v
\\&\text{diam}(\Sigma) \le D
\\& \frac{1}{|\Sigma|} \int_{\Sigma} \|R - \lambda g \circ g\|^p d \mu \le \epsilon(n,p,\lambda,D)\label{Eq-l2curv}
\end{align}
is $C^{\alpha}$, $\alpha < 2 - \frac{n}{p}$ close to a constant curvature metric on $\Sigma$.
\end{Cor}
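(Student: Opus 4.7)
The plan is to argue by contradiction, combining precompactness of manifolds with integral curvature bounds (the main machinery of Petersen--Wei) with elliptic regularity on the limit. Suppose the conclusion fails: then for some $\epsilon_0>0$ there is a sequence $(\Sigma_i,g_i)$ of closed $n$-manifolds with $\operatorname{vol}(\Sigma_i)\ge v$, $\operatorname{diam}(\Sigma_i)\le D$, and
\begin{equation*}
\frac{1}{|\Sigma_i|}\int_{\Sigma_i}\|R_i-\lambda g_i\circ g_i\|^p\,d\mu_i \longrightarrow 0,
\end{equation*}
yet each $(\Sigma_i,g_i)$ stays at least $\epsilon_0$-far in $C^{\alpha}$ from every constant-curvature metric on the underlying smooth manifold. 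This framing converts the rigidity statement into a compactness/limit identification problem.

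Next I would invoke the Petersen--Wei convergence theory for manifolds with $L^p$ curvature bounds ($p>n/2$ being precisely the threshold at which harmonic-coordinate estimates survive). The bound above, together with the uniform diameter bound, gives a uniform $L^p$ bound on the full curvature tensor $R_i$ (since $\|\lambda g_i\circ g_i\|$ is controlled by $\lambda$ and $n$). Combined with the noncollapsing bound $\operatorname{vol}(\Sigma_i)\ge v$, this yields a uniform lower bound on the harmonic radius. Consequently, after passing to a subsequence and diffeomorphisms, one obtains a limit metric $g_\infty$ on a fixed smooth manifold $\Sigma_\infty$ with $g_i\to g_\infty$ in $W^{2,p}$ and hence in $C^{\alpha}$ for $\alpha<2-\tfrac{n}{p}$ by Morrey embedding, with $\operatorname{vol}(g_\infty)\ge v$ and $\operatorname{diam}(g_\infty)\le D$.

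Now I would identify the limit. The $L^p$ condition passes to the limit (by weak lower semicontinuity or by writing it as a continuous functional on $W^{2,p}$), giving
\begin{equation*}
\int_{\Sigma_\infty}\|R_\infty-\lambda g_\infty\circ g_\infty\|^p\,d\mu_\infty=0,
\end{equation*}
so $R_\infty=\lambda g_\infty\circ g_\infty$ almost everywhere. Because $g_\infty\in W^{2,p}$ with $p>n/2$ this identity combined with the Bianchi identities can be bootstrapped in harmonic coordinates to show that $g_\infty$ is actually smooth and has constant sectional curvature $\lambda$. Thus $g_i\to g_\infty$ in $C^{\alpha}$ where $g_\infty$ is a constant-curvature metric, contradicting the assumption that each $g_i$ lies at distance $\ge\epsilon_0$ from the constant-curvature locus in $C^\alpha$.

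The main obstacle is the convergence step: one genuinely needs the harmonic-radius estimates under integral (rather than pointwise) curvature bounds, which is the content of Petersen--Wei's analytic machinery and is nontrivial at the threshold $p>n/2$. Once precompactness in $W^{2,p}$ is secured the identification of the limit as constant-curvature is a clean elliptic regularity argument, and the contradiction is then immediate; the quantitative dependence $\epsilon=\epsilon(n,p,\lambda,D)$ comes out of the contradiction/compactness scheme automatically, and the lower volume bound $v$ enters only to guarantee the limit is nondegenerate.
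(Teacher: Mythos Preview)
The paper does not supply its own proof of this statement: it is quoted verbatim as Corollary~1.5 of Petersen--Wei \cite{PW} and used as a black box in the subsequent arguments. There is therefore nothing in the present paper to compare your proposal against.

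That said, your sketch is the correct outline of how Petersen--Wei actually establish the result in \cite{PW}: a contradiction/compactness argument resting on their harmonic-radius estimate under $L^p$ curvature bounds with $p>n/2$, followed by identification of the $W^{2,p}$ limit as a constant-curvature metric via elliptic regularity in harmonic coordinates. Your observation that the volume lower bound $v$ is used only to prevent collapsing (so that a nondegenerate limit exists), while $\epsilon$ depends only on $n,p,\lambda,D$, is also accurate.
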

In our case $n=2$, $p = 2$, $\alpha < 1$ and the Riemann curvature tensor is $R = K g \circ g$, where $g \circ g$ represents the Kulkarni-Nomizu product, and so $\|R - \lambda g \circ g\|^2 = \|g \circ g\|^2 |K - \lambda|^2 = 2^4|K - \lambda|^2$. This shows that we need to verify that the Gauss curvature of $\Sigma_t$ is becoming constant in order to satisfy \eqref{Eq-l2curv} which is exactly what we will be able to show in Theorem \ref{WarpConv} and Corollaries \ref{End1}, \ref{End2}, \ref{LTPMT}, \ref{LTRPI}, \ref{PKPMT}. Then by combining these results with the rigidity result of Petersen and Wei, Theorem \ref{rigidity}, we are able to complete the proofs of Theorems \ref{SPMT} and \ref{SRPI}. 

We start with a theorem which says that if we knew that the warped products $g_3^i$ also had positive scalar curvature then they would have to converge to $\delta$ as $i \rightarrow \infty$ along a subsequence. 

\begin{Thm}\label{WarpConv}
Let $\tilde{g}^i(x,t)= \frac{r_0^2}{4}e^tdt^2 + e^t g^i(x)$ be a sequence of Riemannian metrics defined on $M=[0,T]\times \Sigma$ where $\Sigma$ is topologically a sphere. If  $\tilde{R}^i$ denotes the scalar curvature with respect to $\tilde{g}^i(x)$ and we assume
\begin{align}
\tilde{R}^i \ge 0
\\ \text{diam}(\Sigma, g^i) \le D
\\m_H(\Sigma_t^i)\rightarrow 0
\end{align}
  then 
\begin{align}
\tilde{g}^i\rightarrow \delta
\end{align}
in $C^{\alpha}$, $\alpha < 1$.
\end{Thm}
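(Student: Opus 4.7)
The plan is to exploit the explicit warped product structure of $\tilde g^i$ together with the topology of $\Sigma$: the sign condition on $\tilde R^i$ is a first-order relation between $K_{g^i}$ and $1/r_0^2$, the Hawking mass condition pins down the area, Gauss--Bonnet then traps $K_{g^i}$ between its pointwise lower bound and its integral mean, and Corollary \ref{rigidity} converts that pinching into $C^\alpha$ control. Introducing the arc-length parameter $s=r_0 e^{t/2}$ so that $\tilde g^i = ds^2 + (s/r_0)^2\,g^i(x)$, the standard scalar-curvature formula for a warped product $ds^2 + f(s)^2 h$ over a two-dimensional base gives
\begin{align*}
\tilde R^i \;=\; \frac{R_{g^i}}{f^2} - \frac{4 f''}{f} - \frac{2(f')^2}{f^2} \;=\; \frac{2\bigl(r_0^2 K_{g^i} - 1\bigr)}{r_0^2 e^t},
\end{align*}
where $K_{g^i}$ is the Gauss curvature of $(\Sigma,g^i)$. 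The assumption $\tilde R^i\ge 0$ is therefore equivalent to the pointwise, $t$-independent bound $K_{g^i}\ge 1/r_0^2$ on $\Sigma$.

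Next I would apply Gauss--Bonnet on $\Sigma\cong S^2$ to combine this lower bound with a total integral: $4\pi = \int_\Sigma K_{g^i}\,dV_{g^i} \ge |\Sigma|_{g^i}/r_0^2$, giving $|\Sigma|_{g^i}\le 4\pi r_0^2$. Meanwhile the warped product structure forces constant mean curvature $H=2/(r_0 e^{t/2})$ on each slice and area $|\Sigma_t^i|=e^t|\Sigma|_{g^i}$, so
\begin{align*}
m_H(\Sigma_t^i) \;=\; \sqrt{\frac{e^t |\Sigma|_{g^i}}{(16\pi)^3}}\,\left(16\pi - \frac{4|\Sigma|_{g^i}}{r_0^2}\right).
\end{align*}
Combined with the area upper bound, the hypothesis $m_H(\Sigma_t^i)\to 0$ leaves only the dichotomy $|\Sigma|_{g^i}\to 0$ or $|\Sigma|_{g^i}\to 4\pi r_0^2$. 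The diameter bound, together with the fact that in the intended application $|\Sigma|_{g^i}=|\Sigma_0^i|=4\pi r_0^2$ is fixed by the class $\mathcal{M}_{r_0,H_0,I_0}^{T,H_1,A_1}$, rules out collapse, so $|\Sigma|_{g^i}\to 4\pi r_0^2$.

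Feeding this back into Gauss--Bonnet yields
\begin{align*}
0 \;\le\; \int_\Sigma \left(K_{g^i} - \tfrac{1}{r_0^2}\right) dV_{g^i} \;=\; 4\pi - \frac{|\Sigma|_{g^i}}{r_0^2} \;\longrightarrow\; 0,
\end{align*}
and the non-negativity of the integrand (Step~1) promotes this to $L^1$-convergence $K_{g^i}\to 1/r_0^2$. I would then invoke Corollary \ref{rigidity} with $n=2$, $p=2$, $\lambda=1/r_0^2$, diameter constant $D$, and an area lower bound (which emerges automatically from $|\Sigma|_{g^i}\to 4\pi r_0^2$) to deduce that $(\Sigma,g^i)$ is $C^\alpha$-close, $\alpha<1$, to a constant-curvature-$1/r_0^2$ metric. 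On the topological sphere this forces the limit to be isometric to $(\Sigma,r_0^2\sigma)$, and since $\tilde g^i - \delta = e^t(g^i - r_0^2\sigma)$ on the bounded slab $[0,T]\times\Sigma$, the desired $C^\alpha$ convergence $\tilde g^i\to\delta$ follows.

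The main obstacle is matching the integrability hypothesis of Corollary \ref{rigidity}, which demands $L^p$ pinching of $K_{g^i}-1/r_0^2$ for some $p>n/2=1$; the one-sided Gauss--Bonnet argument above only supplies $L^1$ smallness for free. To close this gap I would interpolate against a uniform $L^\infty$ bound on $K_{g^i}$: since $(K_{g^i}-1/r_0^2)^2 \le \|K_{g^i}-1/r_0^2\|_\infty\,(K_{g^i}-1/r_0^2)$, an upper bound on $K_{g^i}$ upgrades $L^1$ to $L^2$ convergence. Such a bound is not part of the bare hypotheses of the theorem, but in the regime where this theorem is used the Gauss equation on $(\Sigma,g^i(\cdot,0))$, together with the built-in bounds $H\ge H_0>0$ and $|A|\le A_1$ from the class $\mathcal{M}_{r_0,H_0,I_0}^{T,H_1,A_1}$, controls $K_{g^i}$ from above. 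A secondary, minor issue is giving a direct argument --- independent of the class definition --- that rules out the collapse alternative $|\Sigma|_{g^i}\to 0$ in the Hawking-mass dichotomy above.
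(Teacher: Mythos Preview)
Your approach is correct and takes a genuinely different route from the paper. The paper recognizes that the slices $\{t\}\times\Sigma$ of the warped product $\tilde g^i$ are themselves an IMCF and invokes Lemma~\ref{CrucialEstimate} and Corollary~\ref{GoToZero}: from $\tilde R^i\ge 0$ and $m_H(\Sigma_t^i)\to 0$ it obtains $\int_{\Sigma_t}\tilde R^i\,d\mu\to 0$, hence $\tilde R^i\to 0$ pointwise a.e.\ on a subsequence, and only \emph{then} computes the warped scalar curvature to deduce $K^i\to 1/r_0^2$ pointwise a.e. You instead compute the warped scalar curvature first, converting $\tilde R^i\ge 0$ directly into the pointwise lower bound $K_{g^i}\ge 1/r_0^2$, and then combine Gauss--Bonnet with an explicit evaluation of the Hawking mass to force $|\Sigma|_{g^i}\to 4\pi r_0^2$ and hence $K_{g^i}\to 1/r_0^2$ in $L^1$. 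Your route is more elementary---it bypasses the Geroch monotonicity machinery entirely---and lands at the same application of Corollary~\ref{rigidity}.

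The two obstacles you flag are real and are in fact glossed over in the paper's own argument. The paper obtains only pointwise a.e.\ convergence of $K^i$ before invoking Petersen--Wei, so it too lacks the $L^2$ control that Corollary~\ref{rigidity} with $p=2$ demands without an auxiliary $L^\infty$ bound; your proposed interpolation against an upper bound on $K_{g^i}$ coming from the ambient IMCF class is the natural fix in either approach. Likewise the paper does not verify the volume lower bound required by Corollary~\ref{rigidity}, and as you observe the bare hypotheses do not preclude collapse (small round spheres of radius $\epsilon\to 0$ satisfy $\tilde R^i\ge 0$, $\mathrm{diam}\le D$, and $m_H\to 0$). In the intended application the area is pinned at $4\pi r_0^2$ by membership in $\mathcal{M}_{r_0,H_0,I_0}^{T,H_1,A_1}$, which resolves both points; your proposal makes this dependence explicit where the paper leaves it implicit.
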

\begin{proof}
By the assumption that $\tilde{R}^i \ge 0$ and $m_H(\Sigma_t^i)\rightarrow 0$ we can use Lemma \ref{CrucialEstimate} and Corollary \ref{GoToZero} to conclude that $\int_{\Sigma_t} \tilde{R}^i d \mu \rightarrow 0$ and hence $\tilde{R}^i \rightarrow 0$ pointwise a.e. along a subsequence. Now lets rewrite the metric above by performing the change of coordinates $s = r_0e^{t/2}$ which means $\tilde{g}^i(x,t)= ds^2 + \frac{s^2}{r_0^2} g^i(x,0)$. Now we can use the warped product formula from the work of Dobarro and Lami Dozo (\cite{DD}, Theorem 2.1) to express $\tilde{R}^i$ in terms of the scalar curvature of $\Sigma_t$, which is twice the Gauss curvature in this case $2K^i$, and the warping function as follows
\begin{align}
\tilde{R}^i = -\frac{2}{s^2} + \frac{2K^ir_0^2}{s^2}
\end{align}
which by the fact that $\tilde{R}^j \rightarrow 0$ pointwise a.e. along a subsequence we find that $K^j \rightarrow \frac{1}{r_0^2}$ pointwise a.e. along a subsequence. Now we can apply Corollary 1.5 of \cite{PW} which implies that $(\Sigma,g^i)$ is $C^{\alpha}$, $\alpha < 1$, close to a round sphere and hence $\tilde{g}^i$ is $C^{\alpha}$ close to $\delta$. 

Then we can get rid of the need for a subsequence by assuming to the contrary that for $\epsilon > 0$ there exists a subsequence so that $|\tilde{g}^k -\delta|_{C^{\alpha}} \ge \epsilon$ but this subsequence satisfies the hypotheses of Theorem \ref{WarpConv} and hence by what we have just shown we know a subsequence must converge which is a contradiction.
\end{proof}

The issue with using the theorem above is that we don't know that the warped product $g_3^i$ has positive scalar curvature just because $\hat{g}^i$ has positive scalar curvature. This turns out not to be the right approach here but could prove to be useful in a case where one was assured that the warped product $g_3^i$ inherited the positive scalar curvature from $\hat{g}^i$.

Now we prove Theorems \ref{SPMT} under the assumption that $K_{12}^i\ge 0$, the sectional curvature of $M_i$ tangent to $\Sigma_0^i$, for all $i$ which mimics the rotationally symmetric case where the spheres have positive $K_{12}$.

\begin{Cor} \label{PKPMT}
Let $U_{T,i} \subset M_i^3$ be a sequence s.t. $U_{T,i}\subset \mathcal{M}_{r_0,H_0,I_0}^{T,H_1,A_1}$ and $m_H(\Sigma_{T}^i) \rightarrow 0$ as $i \rightarrow \infty$. If in addition we assume that $K^i_{12}(x,0) \ge 0$, the sectional curvature of $M_i^3$ tangent to $\Sigma_0$, then we have that the Gauss curvature of $\Sigma_0$ w.r.t $g^i(x,0)$ will converge to that of a round sphere of radius $r_0$ and
\begin{align}
\hat{g}^i\rightarrow \delta
\end{align}
in $L^2$ with respect to the metric $\delta$.
\end{Cor}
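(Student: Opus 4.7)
The plan is to reduce the desired $L^2$ convergence $\hat g^i \to \delta$ to showing that the time-zero slice $(\Sigma, g^i(x,0))$ converges to the round sphere of radius $r_0$, then close that claim via Petersen--Wei (Theorem \ref{rigidity}). By the triangle inequality together with Theorems \ref{gtog1}, \ref{g1tog2}, and \ref{g2tog3}, we already have $\|\hat g^i - g_3^i\|_{L^2(\delta)} \to 0$, so it is enough to prove $g_3^i \to \delta$ in $L^2$. Since $g_3^i(x,t)=\frac{r_0^2 e^t}{4}dt^2 + e^t g^i(x,0)$ and $\delta = \frac{r_0^2 e^t}{4}dt^2 + r_0^2 e^t \sigma(x)$, it suffices to show $(\Sigma, g^i(x,0))$ is close to $(\Sigma, r_0^2 \sigma)$ in a norm strong enough (e.g.\ $C^\alpha$) to give $L^2$ smallness on the full warped product $U_T^i$ (the warping factors $e^t$ are bounded on $[0,T]$).

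To invoke Theorem \ref{rigidity} with $n=2$, $\lambda=1/r_0^2$, I would verify: the volume is exactly $4\pi r_0^2$ by the definition of the class; the diameter is uniformly bounded by hypothesis; and $\Sigma_0^i$ is eventually a topological sphere by Corollary \ref{GoToZero}. The content of the hypothesis is then the integrated smallness of $K^i(x,0) - 1/r_0^2$, which I would extract from the Gauss equation $K^i(x,0) = \lambda_1^i \lambda_2^i + K_{12}^i(x,0)$. Proposition \ref{avgH} combined with the uniform bound $H_i \le H_1$ gives $H_i \to 2/r_0$ in every $L^p(\Sigma_0^i)$; Corollary \ref{GoToZero} gives $(\lambda_1^i-\lambda_2^i)^2 \to 0$ in $L^1$, and with $|A|\le A_1$ bounded this upgrades to $L^p$ for all $p$; together these give $\lambda_1^i \lambda_2^i \to 1/r_0^2$ in every $L^p(\Sigma_0^i)$. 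Finally, the hypothesis $K_{12}^i(x,0) \ge 0$ and $\int_{\Sigma_0^i} K_{12}^i d\mu \to 0$ (Cor \ref{GoToZero}) yield $K_{12}^i \to 0$ in $L^1$.

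The main obstacle is upgrading this $L^1$ control of $K_{12}^i$ to the $L^p$ control with $p>1$ that Petersen--Wei requires in dimension two. My strategy is the signed split $K^i - 1/r_0^2 = (\lambda_1^i\lambda_2^i - 1/r_0^2) + K_{12}^i$. Nonnegativity of $K_{12}^i$ forces the pointwise bound $(K^i - 1/r_0^2)_- \le (\lambda_1^i\lambda_2^i - 1/r_0^2)_-$, which already goes to zero in every $L^p$; then Gauss--Bonnet together with $|\Sigma_0^i| = 4\pi r_0^2$ gives $\int_{\Sigma_0^i} (K^i - 1/r_0^2) d\mu = 0$ once the topology stabilizes, so $\|(K^i - 1/r_0^2)_+\|_{L^1} = \|(K^i - 1/r_0^2)_-\|_{L^1} \to 0$, producing $L^1$ convergence of $K^i - 1/r_0^2$. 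To push to $L^p$ for some $p > 1$, I would attempt either to obtain a uniform higher-integrability upper envelope for $K^i$ from the class bounds (on $|A|$, isoperimetric constant, and $R\ge 0$) and interpolate against the $L^1$ decay, or to pass to a subsequence realizing pointwise a.e.\ convergence and invoke dominated convergence against such an envelope --- mimicking the pointwise-to-$L^p$ step used in Theorem \ref{WarpConv}.

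Once the Gauss curvature hypothesis of Theorem \ref{rigidity} is satisfied, Petersen--Wei delivers $g^i(x,0) \to r_0^2 \sigma$ in $C^\alpha$. This upgrades to $g_3^i \to \delta$ in $L^2$ on $U_T^i$ (the $e^t$ factors contribute bounded multiplicative constants on $[0,T]$), which combined with the Section 3 reduction $\hat g^i - g_3^i \to 0$ in $L^2$ yields the conclusion $\hat g^i \to \delta$ in $L^2$ with respect to $\delta$.
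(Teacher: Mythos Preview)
Your proposal follows essentially the same route as the paper: reduce via Section~3 to showing $(\Sigma,g^i(x,0))$ is close to the round $r_0$-sphere, decompose $K^i=K_{12}^i+\lambda_1^i\lambda_2^i$ via the Gauss equation, use $K_{12}^i\ge 0$ together with $\int_{\Sigma_0^i}K_{12}^i\,d\mu\to 0$ from Corollary~\ref{GoToZero}, and then invoke Petersen--Wei (Corollary~\ref{rigidity}) followed by the subsequence-contradiction device to remove the subsequence.

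The one difference is in how the Petersen--Wei smallness is reached. The paper does not attempt your $L^p$ route or the Gauss--Bonnet balancing trick; it simply notes that $K_{12}^i\ge 0$ with $\int K_{12}^i\to 0$ forces $K_{12}^j\to 0$ pointwise a.e.\ on a subsequence, combines this with the pointwise a.e.\ convergence $\lambda_1^j\lambda_2^j\to 1/r_0^2$ already available from Proposition~\ref{avgH} and Corollary~\ref{GoToZero}, and passes directly to Corollary~\ref{rigidity}. This is exactly the second option you list (``pass to a subsequence realizing pointwise a.e.\ convergence and invoke dominated convergence''), so you have correctly located where the argument lands; your Gauss--Bonnet detour for $L^1$ control of $(K^i-1/r_0^2)_+$ is a pleasant extra observation but is not needed for the paper's argument. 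Note also that the issue you flag --- the need for a uniform integrable envelope on $K_{12}^i$ to convert pointwise a.e.\ convergence into the $L^p$ smallness Petersen--Wei requires --- is not addressed explicitly in the paper either; the paper simply asserts the conclusion after obtaining pointwise a.e.\ convergence, so your caution there is warranted rather than a defect relative to the paper.
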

\begin{proof}
By Lemma \ref{GoToZero} we know that $\int_{\Sigma_0^i} K^i_{12} d \mu \rightarrow 0$ and if we know that $K^i_{12} \ge 0$ then we know that $K_{12}^j \rightarrow 0$ pointwise a.e. on a subsequence. Combining this with the fact that $\lambda_1^j\lambda_2^j \rightarrow \frac{2}{r_0^2}$ pointwise a.e. and the fact that $K^j = K_{12}^j + \lambda_1^j\lambda_2^j$ yields the desired result. Now we can apply the result of Petersen and Wei \cite{PW}, Corollary \ref{rigidity} which implies that $(\Sigma,g^i(x,0))$ is $C^{\alpha}$, $\alpha < 1$, close to a round sphere of radius $r_0$. So we can put everything together by noticing
\begin{align}
\int_{U_T} |\hat{g}^i - \delta|_{\delta}^2dV &\le \int_{U_T} |\hat{g}^i - \delta|_{g_3^i}^2 +|(g_3^i)^{lm}(g_3^i)^{pq} -\delta^{lm}\delta^{pq}||\hat{g} - \delta|_{lp} |\hat{g} - \delta|_{mq}dV
\end{align}
where we can show the last term goes to $0$ by using that  $|g_3^i- \delta|_{C^{\alpha}} \rightarrow 0$ as $i \rightarrow \infty$ and noticing that $\int_{U_T}|\hat{g}^i - \delta|_{\delta}^2dV \le C$.

Then we can get rid of the need for a subsequence by assuming to the contrary that for $\epsilon > 0$ there exists a subsequence so that $|\tilde{g}^k -\delta|_{C^{\alpha}} \ge \epsilon$ but this subsequence satisfies the hypotheses of Theorem \ref{WarpConv} and hence by what we have just shown we know a subsequence must converge which is a contradiction.
\end{proof}

Now we will prove Theorems \ref{SPMT} and \ref{SRPI} under the assumption of integral Ricci curvature bounds. For this one should remember that the Sobolev space $W^{1,2}(\Sigma\times[a,b])$ is defined with respect to $\delta$.

\begin{Cor} \label{End1}
Let $U_{T,i} \subset M_i^3$ be a sequence such that $U_{T,i}\subset \mathcal{M}_{r_0,H_0,I_0}^{T,H_1,A_1}$ and $m_H(\Sigma_{T}^i) \rightarrow 0$ as $i \rightarrow \infty$. If $[a,b]\subset [0,T]$ assume that
\begin{align}
\|Rc^i(\nu,\nu)\|_{W^{1,2}(\Sigma\times [a,b])}  \le C
\end{align}
 and $diam(\Sigma_t^i) \le D$ $\forall$ $i$, $t \in [a,b]$ then 
\begin{align}
\hat{g}^i \rightarrow \delta
\end{align}
 in $L^2$ with respect to the metric $\delta$.
\end{Cor}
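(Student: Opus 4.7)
The plan is to use the triangle inequality together with the convergence $\hat{g}^i - g_3^i \to 0$ in $L^2$ already established in Section \ref{Sect-Conv}, which reduces the problem to showing $g_3^i \to \delta$ in $L^2(U_T^i)$. Since the $dt^2$ components of $g_3^i$ and $\delta$ agree, this in turn reduces to showing that $g^i(x,0) \to r_0^2 \sigma(x)$, for instance in $C^\alpha$. For the latter I would apply the Petersen-Wei rigidity result (Corollary \ref{rigidity}) at a well-chosen time slice $t^\ast \in [a,b]$ to conclude that $(\Sigma, g^i(\cdot,t^\ast))$ is $C^\alpha$ close to a round metric of radius $r_0 e^{t^\ast/2}$, and then use the metric evolution estimates of Lemma \ref{metricEst}, combined with the pointwise a.e.\ convergence $2\lambda_p^i/H^i \to 1$ coming from Proposition \ref{avgH} and Corollary \ref{GoToZero}, to transfer that closeness back to $g^i(\cdot,0)$.

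The main computation is to verify the hypothesis of Corollary \ref{rigidity} at $t^\ast$, namely that the Gauss curvature $K^i(\cdot,t^\ast)$ of $\Sigma_{t^\ast}^i$ approaches the constant $1/(r_0^2 e^{t^\ast})$ in the required integral norm. Using the Gauss equation I decompose $K^i = K_{12}^i + \lambda_1^i \lambda_2^i$. The product $\lambda_1^i \lambda_2^i$ is handled as in the proof of Theorem \ref{g1tog2}: Corollary \ref{GoToZero} gives $(\lambda_1^i-\lambda_2^i)^2 \to 0$ in $L^1$, while Proposition \ref{avgH} gives $H_i \to 2/(r_0 e^{t/2})$ pointwise a.e., so $\lambda_p^i \to 1/(r_0 e^{t/2})$ pointwise a.e.\ and hence $\lambda_1^i \lambda_2^i \to 1/(r_0^2 e^t)$ pointwise a.e.; the uniform bound $|A^i| \le A_1$ then upgrades this to $L^p$ convergence for every finite $p$ by dominated convergence.

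For the tangential sectional curvature, write $K_{12}^i = R^i/2 - Rc^i(\nu,\nu)$. The $W^{1,2}$ hypothesis enters in two steps. First, the compact Sobolev embedding $W^{1,2}(\Sigma \times [a,b]) \hookrightarrow\hookrightarrow L^2(\Sigma \times [a,b])$ applied to the uniformly $W^{1,2}$-bounded sequence $Rc^i(\nu,\nu)$ extracts a subsequence converging strongly in $L^2(\Sigma \times [a,b])$. Second, Lemma \ref{WeakRicciEstimate}, in the $m_H(\Sigma_T^i) \to 0$ case, identifies the weak limit against arbitrary $\phi \in C^1_c(\Sigma \times (a,b))$ as zero, so the strong limit must vanish as well. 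By Fubini I can then pick $t^\ast \in [a,b]$ along a further subsequence for which $Rc^i(\nu,\nu)(\cdot,t^\ast) \to 0$ in $L^2(\Sigma)$. Meanwhile $R^i \ge 0$ together with Corollary \ref{GoToZero} gives $\int_{\Sigma_{t^\ast}^i} R^i\, d\mu \to 0$, i.e.\ $L^1$ convergence of $R^i$ on this slice.

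The hard part will be promoting the $L^1$ control on $R^i$ to the $L^p$ control with $p > n/2 = 1$ required by Corollary \ref{rigidity}. The natural route is interpolation: one derives a uniform higher $L^q$ bound on $R^i$ on the slice from the Gauss identity, Gauss-Bonnet $\int K^i d\mu = 4\pi$, the pointwise bounds on $H, |A|$, and the higher-$L^q$ control on $Rc^i(\nu,\nu)$ that follows from the $W^{1,2}$ bound via Sobolev embedding on the three-dimensional domain $\Sigma \times [a,b]$; interpolating $L^1$ smallness against this uniform $L^q$ bound then gives $R^i \to 0$ in $L^p$ for some $p > 1$, so $K_{12}^i \to 0$ and hence $K^i \to 1/(r_0^2 e^{t^\ast})$ in $L^p$. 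Corollary \ref{rigidity} then delivers $(\Sigma, g^i(\cdot, t^\ast)) \to (\Sigma, r_0^2 e^{t^\ast} \sigma)$ in $C^\alpha$; pulling back to $t=0$ via Lemma \ref{metricEst}, combining with the already established $\hat{g}^i - g_3^i \to 0$ in $L^2$, and using the standard contradiction argument to remove the passage to subsequences, yields $\hat{g}^i \to \delta$ in $L^2$ as required.
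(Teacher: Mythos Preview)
Your overall strategy mirrors the paper's: reduce to $g_3^i \to \delta$ via Section~\ref{Sect-Conv}, use the $W^{1,2}$ bound and compact Sobolev embedding to get $Rc^i(\nu,\nu) \to 0$ strongly in $L^2(\Sigma\times[a,b])$ with the limit identified by Lemma~\ref{WeakRicciEstimate}, pick a good time slice, show $K^i \to \text{const}$ there, invoke Petersen--Wei (Corollary~\ref{rigidity}), and finish by the subsequence contradiction argument. You are in fact more explicit than the paper in two places: you spell out how to transfer the $C^\alpha$ closeness from $t^\ast$ back to $t=0$ via Lemma~\ref{metricEst}, and you isolate the scalar curvature contribution in $K_{12}^i = R^i/2 - Rc^i(\nu,\nu)$. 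The paper simply asserts $\int_\Sigma (K_{12}^i)^2\,r_0^2\,d\sigma \to 0$ immediately after obtaining $Rc^i(\nu,\nu)\to 0$ in $L^2$ on the slice, without mentioning $R^i$ at all.

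Your proposed handling of the $R^i$ term, however, has a gap. You claim a uniform $L^q$ bound on $R^i$ on the slice follows from the Gauss identity, Gauss--Bonnet, the pointwise bounds on $H,|A|$, and the $L^q$ control on $Rc^i(\nu,\nu)$ coming from $W^{1,2}\hookrightarrow L^6$ in three dimensions. But writing $R^i = 2K^i + 2Rc^i(\nu,\nu) - 2\lambda_1^i\lambda_2^i$, the only listed ingredient that touches the intrinsic curvature $K^i$ is Gauss--Bonnet, and $\int_{\Sigma_{t^\ast}^i} K^i\,d\mu = 4\pi$ is not an $L^q$ bound on $K^i$ (nor even on $|K^i|$, since $K^i$ need not have a sign). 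Hence no $L^q$ bound on $R^i$ follows from what you list, and you cannot interpolate from $\|R^i\|_{L^1}\to 0$ (which you do have, from $R^i\ge 0$ and Corollary~\ref{GoToZero}) to $\|R^i\|_{L^p}\to 0$ for some $p>1$; the Petersen--Wei hypothesis $p>n/2=1$ therefore remains unverified. The paper's proof glosses over exactly this point by silent assertion; you correctly flag it as ``the hard part,'' but the interpolation sketch does not close it.
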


\begin{proof}
By the assumption that $\|Rc^i(\nu,\nu)\|_{W^{1,2}(\Sigma\times [a,b])} \le C$ we know by Sobolev embedding that a subsequence converges strongly in $L^2(\Sigma\times [a,b])$ to a function $k(x,t) \in L^2(\Sigma\times [a,b])$ , i.e. 
\begin{align}
\int_a^b\int_{\Sigma}|Rc^j(\nu,\nu) - k(x,t)|^2 r_0^2e^td\sigma dt  \rightarrow 0
\end{align}
 By uniqueness of weak limits, combined with Lemma \eqref{WeakRicciEstimate}, we know that 
 \begin{align}
 \int_a^b\int_{\Sigma}|Rc^j(\nu,\nu)|^2 r_0^2e^td\sigma dt \rightarrow 0
 \end{align} 
 and hence a subsequence of $\int_{\Sigma}|Rc^j(\nu,\nu)|^2 r_0^2e^td\sigma \rightarrow 0$ for a.e. $t \in [a,b]$. If we choose some $t_0\in [a,b]$ where the pointwise convergence holds then we have that $\int_{\Sigma}(K_{12}^i)^2 r_0^2d\sigma  \rightarrow 0$ and hence 
\begin{align}
\int_{\Sigma}(K^i-\frac{1}{r_0^2})^2 r_0^2e^{t_0}d\sigma &=\int_{\Sigma}(K_{12}^i+ \lambda_1^i\lambda_2^i - \frac{1}{r_0^2})^2 r_0^2e^{t_0}d\sigma
\\&\le 2\int_{\Sigma}(K_{12}^i)^2+(\lambda_1^i\lambda_2^i - \frac{1}{r_0^2})^2 r_0^2e^{t_0}d\sigma \rightarrow 0 \label{Eq-lastEnd1}
\end{align}

This shows that $\int_{\Sigma} (K^i - \frac{1}{r_0^2})^2 r_0^2e^{t_0}d\sigma \rightarrow 0$ and hence by combining with the diameter bound diam$(\Sigma_0^i)\le D$   we can apply the rigidity result of Petersen and Wei \cite{PW}, Corollary \ref{rigidity}, with $p = 2$ which implies that $|g^i(x,0) - r_0^2\sigma(x)|_{C^{\alpha}} \rightarrow 0$ as $i \rightarrow \infty$ where $\alpha < 1$. This shows that $|g_3^i- \delta|_{C^{\alpha}} \rightarrow 0$ as $i \rightarrow \infty$ where $\alpha < 1$ which also implies $\int_{U_T}|\hat{g} - \delta|_{g_3^i}dV \rightarrow 0$ as $i \rightarrow \infty$. So we can put everything together by noticing
\begin{align}
\int_{U_T} |\hat{g}^i - \delta|_{\delta}^2dV &\le \int_{U_T} |\hat{g}^i - \delta|_{g_3^i}^2 +|(g_3^i)^{lm}(g_3^i)^{pq} -\delta^{lm}\delta^{pq}||\hat{g} - \delta|_{lp} |\hat{g} - \delta|_{mq}dV \label{last EndEnd1}
\end{align}
where we can show the last term of \eqref{last EndEnd1} goes to $0$ by using that  $|g_3^i- \delta|_{C^{\alpha}} \rightarrow 0$ as $i \rightarrow \infty$ and noticing that $\int_{U_T}|\hat{g}^i - \delta|_{\delta}^2dV \le C$.

Then we can get rid of the need for a subsequence by assuming to the contrary that for $\epsilon > 0$ there exists a subsequence so that $\int_{U_T} |\hat{g}^k - \delta|_{\delta}^2dV \ge \epsilon$, but this subsequence satisfies the hypotheses of Theorem \ref{End1} and hence by what we have just shown we know a further subsequence must converge which is a contradiction.
\end{proof}

Now we finish up by proving a similar theorem in the Riemannian Penrose Inequality case.

\begin{Cor}\label{End2}
Let $U_{T,i} \subset M_i^3$ be a sequence s.t. $U_{T,i}\subset \mathcal{M}_{r_0,H_0,I_0}^{T,H_1,A_1}$, $m_H(\Sigma_{T}^i)-m_H(\Sigma_0^i) \rightarrow 0$, and $m_H(\Sigma_T) \rightarrow m > 0$ as $i \rightarrow \infty$. If $[a,b]\subset [0,T]$ assume that
\begin{align}
\|Rc^i(\nu,\nu)\|_{W^{1,2}(\Sigma\times [a,b])}  \le C
\end{align}
 and $diam(\Sigma_t^i) \le D$ $\forall$ $i$, $t\in [a,b]$ then
\begin{align}
\hat{g}^i \rightarrow g_s
\end{align}
in $L^2$ with respect to the metric $g_s$.
\end{Cor}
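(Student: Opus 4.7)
The plan is to mirror Corollary \ref{End1} while carrying through the nontrivial Schwarzschild limits furnished by the RPI branches of Corollary \ref{GoToZero} and Lemma \ref{WeakRicciEstimate}. Concretely, I will fix a good time slice $t_0 \in [a,b]$ at which the Gauss curvature $K^i$ of $(\Sigma, g^i(x, t_0))$ converges in $L^2$ to the constant curvature $\tfrac{1}{r_0^2 e^{t_0}}$ of the round sphere of the correct area, apply the Petersen--Wei rigidity (Corollary \ref{rigidity}) to obtain $C^\alpha$ closeness of $g^i(x, t_0)$ to $r_0^2 e^{t_0}\sigma$, transfer back to $t=0$ using Lemma \ref{metricEst}, and assemble the conclusion via the triangle inequality and the chain $\hat g^i \to g_1^i \to g_2^i \to g_3^i$ from Section \ref{Sect-Conv}.

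First, the $W^{1,2}$ bound and Rellich--Kondrachov yield a subsequence with $Rc^i(\nu,\nu) \to k(x,t)$ strongly in $L^2(\Sigma \times [a,b], \delta)$. Lemma \ref{WeakRicciEstimate} pins down the weak limit; combined with the area-preserving parametrization of Proposition \ref{avgH} (so that $d\mu_t^i = r_0^2 e^t\, d\sigma$), uniqueness of weak limits forces $k(x,t)$ to coincide with the Schwarzschild radial Ricci $-\tfrac{2m}{r_0^3}\, e^{-3t/2}$. Pick $t_0 \in [a,b]$ at which the slicewise $L^2$ convergence holds and pass to a further subsequence for pointwise a.e.\ convergence on $\Sigma$. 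The RPI branch of Corollary \ref{GoToZero} together with $R^i \ge 0$ gives $R^i(\cdot, t_0) \to 0$ a.e., while Proposition \ref{avgH} and $\int (\lambda_1^i - \lambda_2^i)^2 d\mu \to 0$ produce $H_i \to \bar H_i$ and $\lambda_1^i - \lambda_2^i \to 0$ a.e.\ at $t_0$. Using \eqref{unifAvgHEst2} and the identity $\lambda_1^i\lambda_2^i = \tfrac14(H_i^2 - (\lambda_1^i - \lambda_2^i)^2)$ I obtain
\begin{equation*}
\lambda_1^i(x,t_0)\lambda_2^i(x,t_0) \longrightarrow \tfrac{1}{r_0^2 e^{t_0}}\Bigl(1 - \tfrac{2m}{r_0} e^{-t_0/2}\Bigr) \quad \text{a.e.\ on } \Sigma.
\end{equation*}

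The crux is the Schwarzschild cancellation in the Gauss equation $K^i = \lambda_1^i\lambda_2^i + K_{12}^i = \lambda_1^i\lambda_2^i + \tfrac12 R^i - Rc^i(\nu,\nu)$, which combines the three pointwise limits into
\begin{equation*}
K^i(x, t_0) \longrightarrow \tfrac{1}{r_0^2 e^{t_0}}\Bigl(1 - \tfrac{2m}{r_0} e^{-t_0/2}\Bigr) + 0 + \tfrac{2m}{r_0^3 e^{3t_0/2}} = \tfrac{1}{r_0^2 e^{t_0}}.
\end{equation*}
The uniform $|A|$ bound together with an $L^2$ dominating function for $Rc^i(\nu,\nu)$ (available from the strong $L^2$ convergence) lets dominated convergence upgrade this to $\int_\Sigma (K^i - \tfrac{1}{r_0^2 e^{t_0}})^2\, r_0^2 e^{t_0}\, d\sigma \to 0$. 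The diameter hypothesis then permits an application of Corollary \ref{rigidity} with $n=2$, $p=2$, $\lambda = \tfrac{1}{r_0^2 e^{t_0}}$, giving $|g^i(x, t_0) - r_0^2 e^{t_0}\sigma|_{C^\alpha} \to 0$ for any $\alpha < 1$.

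Finally, to transfer the $C^\alpha$ estimate from $t = t_0$ back to $t = 0$ I would use Lemma \ref{metricEst}: pointwise a.e.\ $\tfrac{2\lambda_p^i}{H^i} \to 1$ together with the uniform bound $\tfrac{2\lambda_p^i}{H^i} \le \tfrac{2A_1}{H_0}$ and dominated convergence force $e^{\int_0^{t_0} 2\lambda_p^i/H^i \, ds} \to e^{t_0}$, so $g^i(x, 0) \to r_0^2\sigma$ and consequently $|g_3^i - g_s|_{C^\alpha} \to 0$ uniformly on $[0,T]$. The estimate
\begin{equation*}
\int_{U_T^i}|\hat g^i - g_s|^2_{g_s}\, dV \le C\int_{U_T^i}|\hat g^i - g_3^i|^2_{g_3^i}\, dV + C\, \|g_3^i - g_s\|_{C^\alpha}\!\int_{U_T^i}|\hat g^i - g_s|^2\, dV,
\end{equation*}
with the first summand controlled by Theorems \ref{gtog1}--\ref{g2tog3} and the second by the $C^\alpha$ closeness together with a uniform $L^2$ bound on $|\hat g^i - g_s|$, then closes the argument. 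The subsequences are removed by the standard contradiction argument used in End1. The main obstacle is the delicate bookkeeping of the three RPI limits ($R^i$, $\lambda_1^i\lambda_2^i$, $Rc^i(\nu,\nu)$) so that the Gauss-equation cancellation lands exactly on $\tfrac{1}{r_0^2 e^{t_0}}$; once this identity is in hand, the remainder is a structural replica of End1.
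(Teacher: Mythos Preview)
Your proposal is correct and is exactly the approach the paper takes: the paper's own proof of Corollary~\ref{End2} consists of the single sentence ``repeat the proof of Theorem~\ref{End1},'' and you have carried out precisely that repetition, supplying the RPI-specific bookkeeping (the Schwarzschild weak limit of $Rc^i(\nu,\nu)$ from Lemma~\ref{WeakRicciEstimate}, the limit of $\lambda_1^i\lambda_2^i$ from \eqref{unifAvgHEst2}, and the Gauss-equation cancellation landing on $\tfrac{1}{r_0^2 e^{t_0}}$) that the paper leaves implicit. The one place to be slightly careful is the passage from $C^\alpha$ closeness of $g^i(\cdot,t_0)$ to $C^\alpha$ closeness of $g_3^i$ to $g_s$ via Lemma~\ref{metricEst}, since the ratio factors there converge only a.e.; but this is the same leap the paper makes in the proof of Corollary~\ref{End1}, and can be handled either by working with the $g_{2'}^i$-variant of Theorem~\ref{g1tog2} anchored at $t_0$ or by noting that for the final inequality only comparability of the inverse metrics (not full $C^\alpha$ convergence) is needed.
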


\begin{proof}
Now one can repeat the proof of Theorem \ref{End1} in order to finish the proof of the results for Theorem \ref{End2}.
\end{proof}

\textbf{Note:} If $Rc(\nu,\nu) = -\frac{2}{r_0^3}e^{-3t/2}m$ and we let $s = r_0e^{t/2}$ then we see that $Rc(\nu,\nu) = -\frac{2}{s^3}m$ which is what we expect for the Schwarschild metric. 

\textbf{Note:} We could have assumed $W^{1,2}$ bounds on $K_{12}$ on $\Sigma\times [a,b]$, instead of on $Rc(\nu,\nu)$, in order to prove the same results as Corollaries \ref{End1} and \ref{End2}.

Next we will prove Corollaries \ref{PMTCOR} and \ref{RPICOR} under the assumption of long time existence by applying Corollaries \ref{End1} and \ref{End2}, respectively.

\begin{Cor}\label{LTPMT}
Let $U_{\infty,i}=\displaystyle\bigcup_{T \in (0,\infty)} U_{T,i} \subset M^3_i$ be a sequence of asymptotically flat manifolds  such that $U_{T,i}\subset \mathcal{M}_{r_0,H_0^T,I_0}^{T,H_1,A_1}$ for all $T \in (0, \infty)$ where $H_0^T \rightarrow 0$ as $T \rightarrow \infty$. Assume that $\displaystyle m_H(\Sigma_{\infty}^i)=\lim_{T\rightarrow \infty}m_H(\Sigma_{T}^i) \rightarrow 0$ as $i \rightarrow \infty$  and that $M_i$ are uniformly asymptotically flat with respect to the IMCF coordinates. Then there exists a $T_* < \infty$ so that for all $T \ge T_*$ we have
\begin{align}
\hat{g}^i \rightarrow \delta 
\end{align}
on $\Sigma\times[0,T]$ in $L^2$ with respect to the metric $\delta$.
\end{Cor}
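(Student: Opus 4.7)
The plan is to reduce Corollary \ref{LTPMT} to Corollary \ref{End1}, which already provides the desired $L^2$ convergence on any fixed time interval $[0,T]$ once its hypotheses are verified. Three items must be checked: (i) the Hawking mass hypothesis $m_H(\Sigma_T^i) \to 0$; (ii) a subinterval $[a,b] \subset [0,T]$ on which $\|Rc^i(\nu,\nu)\|_{W^{1,2}(\Sigma \times [a,b])} \le C$; and (iii) a uniform diameter bound on the same subinterval. Item (i) is essentially free: by Geroch monotonicity along smooth IMCF, $0 \le m_H(\Sigma_T^i) \le m_H(\Sigma_\infty^i)$, so the assumption $m_H(\Sigma_\infty^i) \to 0$ immediately yields $m_H(\Sigma_T^i) \to 0$ for every finite $T$. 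The role of the threshold $T_*$ is to guarantee that items (ii) and (iii) can be arranged using the uniform asymptotic flatness.

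To choose $T_*$, I would exploit the identification between the IMCF coordinate $s = r_0 e^{t/2}$ and the asymptotic radial coordinate $|x|$ guaranteed by the hypothesis that the $M_i$ are uniformly asymptotically flat \emph{with respect to the IMCF coordinates}. Fix $R_0$ so that the bounds \eqref{MetricCond}--\eqref{Der2MetricCond} hold on $\{|x| \ge R_0\}$ with a common constant $C$ for every $M_i$. Using $|\Sigma_t^i| = 4\pi r_0^2 e^t$ together with the AF comparison between $g^i$ and $\delta$, pick $T_*$ large enough that for every $i$ and every $t \ge T_*/2$ the surface $\Sigma_t^i$ lies in $\{|x| \ge R_0\}$. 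For $T \ge T_*$, take $[a,b] := [T_*/2,\,T]$.

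On $\Sigma \times [a,b]$, the uniform AF bounds \eqref{DerMetricCond}--\eqref{Der2MetricCond} control at most three coordinate derivatives of the metric, and the schematic identities $Rc \sim \partial^2 g + (\partial g)^2$ and $\nabla Rc \sim \partial^3 g + \partial^2 g \cdot \partial g + (\partial g)^3$ give $|Rc^i|, |\nabla Rc^i| \le C$ pointwise and uniformly in $i$. Integrating with respect to the fixed volume form of $\delta$ on $\Sigma \times [a,b]$ (which has finite measure once $T$ is fixed) yields $\|Rc^i(\nu,\nu)\|_{W^{1,2}(\Sigma \times [a,b])} \le C$. Similarly, on this region $g^i(\cdot,t)$ is uniformly comparable to the round metric $r_0^2 e^t \sigma$, so $\mathrm{diam}(\Sigma_t^i) \le D$ for $t \in [a,b]$ with $D$ depending only on $T$ and the AF constants. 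With items (i)--(iii) in hand, Corollary \ref{End1} applies to each such $T$ and delivers $\hat{g}^i \to \delta$ in $L^2(\Sigma \times [0,T])$ with respect to $\delta$.

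The main technical obstacle is making the match between the IMCF time $t$ and the asymptotic radial coordinate precise enough to transport the three sets of AF bounds into a genuine $W^{1,2}$ bound on $Rc^i(\nu,\nu)$ with respect to the Euclidean reference metric $\delta$, rather than with respect to $\hat{g}^i$. This amounts to quantifying how quickly $\Sigma_t^i$ enters the asymptotic region and bounding the discrepancy between coordinate and covariant derivatives on that region; both are standard consequences of \eqref{MetricCond}--\eqref{Der2MetricCond}, but need to be carried out carefully to ensure the constants do not depend on $i$.
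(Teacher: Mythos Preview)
Your proposal is correct and follows essentially the same route as the paper: reduce to Corollary~\ref{End1} by using uniform asymptotic flatness in the IMCF coordinates to verify the diameter bound and the $W^{1,2}$ bound on $Rc^i(\nu,\nu)$ on a suitable subinterval, after which the $L^2$ convergence is immediate. The only cosmetic difference is that the paper takes the subinterval to be $[T-\epsilon,T]$ for some small $\epsilon>0$ rather than your $[T_*/2,T]$, and it does not spell out the Geroch-monotonicity step for item~(i) or the schematic derivative identities for $Rc$ that you include; your write-up is in fact more detailed than the paper's own proof.
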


\begin{proof}
By the assumption that $M_j$ are uniformly asymptotically flat with respect to the IMCF coordinates we immediately find that diam$(\Sigma_T^i) \le C$ and we can use $\eqref{Der2MetricCond}$ to show that there exists a $\epsilon > 0$ so that $\|Rc^i(\nu,\nu)\|_{W^{1,2}(\Sigma\times [T-\epsilon,T])} \le C$  for all $T \ge T_*$. Then we can apply the results of Corollary \ref{End1} for each fixed $T \ge T_*$ to finish the proof.
\end{proof}

\begin{Cor}\label{LTRPI}
Let $U_{\infty,i}=\displaystyle\bigcup_{T \in (0,\infty)} U_{T,i} \subset M^3_i$ be a sequence of asymptotically flat manifolds  such that $U_{T,i}\subset \mathcal{M}_{r_0,H_0^T,I_0}^{T,H_1,A_1}$ for all $T \in (0, \infty)$ where $H_0^T \rightarrow 0$ as $T \rightarrow \infty$. Define $\displaystyle m_H(\Sigma_{\infty}) = \lim_{T\rightarrow \infty} m_H(\Sigma_T)$ and assume that $m_H(\Sigma_{\infty}^i)- m_H(\Sigma_{0}^i) \rightarrow 0$, $m_H(\Sigma_0^i)\rightarrow m > 0$ as $i \rightarrow \infty$, and that $M_i$ are uniformly asymptotically flat with respect to the IMCF coordinates. Then there exists a $T_* < \infty$ so that for all $T \ge T_*$ we have 
\begin{align}
\hat{g}^i \rightarrow g_s
\end{align}
on $\Sigma\times[0,T]$ in $L^2$ with respect to the metric $\delta$.
\end{Cor}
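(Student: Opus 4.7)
The plan is to mirror the proof of Corollary \ref{LTPMT}, simply replacing the appeal to Corollary \ref{End1} with an appeal to Corollary \ref{End2}. The strategy is to reduce the long-time statement to a finite-time one by fixing any $T \ge T_*$, then verifying that all hypotheses of Corollary \ref{End2} hold on $U_T^i$.

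First I would transfer the mass hypotheses from infinity to any fixed $T$. By Geroch monotonicity along the smooth IMCF, for each $T \in (0,\infty)$ we have
\begin{align*}
m_H(\Sigma_0^i) \le m_H(\Sigma_T^i) \le m_H(\Sigma_\infty^i).
\end{align*}
The assumption $m_H(\Sigma_0^i) \to m > 0$ combined with $m_H(\Sigma_\infty^i) - m_H(\Sigma_0^i) \to 0$ squeezes $m_H(\Sigma_T^i) \to m$ and forces $m_H(\Sigma_T^i) - m_H(\Sigma_0^i) \to 0$ for every fixed $T$. These are precisely the mass hypotheses demanded by Corollary \ref{End2}.

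Next I would extract the geometric bounds from uniform asymptotic flatness with respect to the IMCF coordinates. Writing the asymptotic radial coordinate as $s = r_0 e^{t/2}$, condition \eqref{MetricCond} already says that $g^i(x,t)/(r_0^2 e^t)$ is uniformly close to $\sigma$ for large $t$, which produces a uniform diameter bound $\mathrm{diam}(\Sigma_t^i) \le D$ on any window $t \in [T-\epsilon,T]$ once $T \ge T_*$. Condition \eqref{Der2MetricCond} controls two derivatives of $g^i$ uniformly, hence controls $Rc^i$; combined with $H \ge H_0^T$ and the uniform $|A|$ bound, this yields a $t$-derivative bound on $Rc^i(\nu,\nu)$, so that $\|Rc^i(\nu,\nu)\|_{W^{1,2}(\Sigma \times [T-\epsilon,T])} \le C$ uniformly in $i$, for all $T \ge T_*$ and some fixed $\epsilon > 0$. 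With these two pieces in place, applying Corollary \ref{End2} on $[a,b] = [T-\epsilon,T]$ for each fixed $T \ge T_*$ gives $\hat{g}^i \to g_s$ in $L^2(\Sigma \times [0,T])$ with respect to $\delta$.

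The step I expect to be the most delicate is verifying the $W^{1,2}$ bound on $Rc^i(\nu,\nu)$ uniformly in $i$, since the asymptotic decay in \eqref{MetricCond}--\eqref{Der2MetricCond} is measured with respect to $\delta$ in the end, while the Sobolev norm in Corollary \ref{End2} is taken with respect to the same $\delta$ on $\Sigma\times[a,b]$. Working in the window $[T-\epsilon,T]$ for large $T$, the region $U_{T-\epsilon,T}^i$ sits far out in the asymptotic end where \eqref{DerMetricCond}--\eqref{Der2MetricCond} apply, and a bookkeeping of the chain rule for the tensor components of $Rc$ (including the $\partial_t = \nu/H$ derivative) together with the uniform lower bound on $H$ on that window should yield the required uniform $W^{1,2}$ bound. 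Once that is in hand the rest is mechanical: invoke Corollary \ref{End2} at each $T \ge T_*$ to conclude.
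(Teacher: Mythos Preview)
Your proposal is correct and follows essentially the same approach as the paper, whose entire proof reads ``Use the exact same argument as in the proof of Corollary \ref{LTPMT}''; that argument in turn extracts the diameter and $W^{1,2}$ Ricci bounds from uniform asymptotic flatness (via \eqref{Der2MetricCond}) on a window $[T-\epsilon,T]$ and then invokes Corollary \ref{End2}. Your additional step of squeezing the mass hypotheses at finite $T$ via Geroch monotonicity is a detail the paper leaves implicit but which is indeed needed to match the hypotheses of Corollary \ref{End2}.
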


\begin{proof}
Use the exact same argument as in the proof of Corollary \ref{LTPMT}.
\end{proof}


\begin{thebibliography}{99}

\bibitem{BA} B. Allen, \textit{ODE Maximum Principle at Infinity and Noncompact Solutions of IMCF in Hyperbolic Space}, arXiv:1610.01211  [math.DG] Oct. 4, 2016.

\bibitem{BA3} B. Allen, \textit{Stability of the Positive Mass Theorem and Riemannian Penrose Inequality for Asymptotically Hyperbolic Manifolds Foliated by Inverse Mean Curvature Flow}, arXiv:1707.09388 [math.DG] July 28, 2017.

\bibitem{BA4} B. Allen, \textit{Long Time Existence of Inverse Mean Curvature Flow in Metrics Conformal to Warped Product Manifolds}, arXiv:1708.02535 [math.DG] Aug. 8, 2017.

\bibitem{B} H. Bray, \textit{Proof Of The Riemannian Penrose Inequality Using The Positive Mass Theorem}, J. Diff. Geom. \textbf{59} (2001) no.2, 177-267.

\bibitem{BF} H. Bray and F. Finster, \textit{Curvature Estimates and the Positive Mass Theorem}, Comm. Anal. Geom. \textbf{2} (2002) 291-306.

\bibitem{C} J. Corvino, \textit{A Note on Asymptotically Flat Metrics on $\R^3$ which are Scalar-flat and Admit Minimal Spheres}, Proc. Amer. Math. Soc. \textbf{12} (2005) 3669-3678 (electronic).

\bibitem{QD} Q. Ding, \textit{The Inverse Mean Curvature Flow in Rotationally Symmetric Spaces}, Chinese Annals of Mathematics - Series B (2010) 1-18.

\bibitem{DD} F. Dobarro and E. Lami Dozo, \textit{Scalar Curvature and Warped Products of Riemannian Manifolds}, Trans. of the Amer. Math. Soc. \textbf{303}, no.1 (1987) 160-168.

\bibitem{F} F. Finster, \textit{A Level Set Analysis of the Witten Spinor with Applications to Curvature Estimates},  Math. Res. Lett. \textbf{1} (2009) 41-55. 

\bibitem{FK} F. Finster and I. Kath, \textit{Curvature Estimates in Asymptotically Flat manifolds of Positive Scalar Curvature}, Comm. Anal. Geom. \textbf{5} (2002) 1017-1031. 

\bibitem{CG1} C. Gerhardt, \textit{Flow of Nonconvex Hypersurfaces into Spheres}, J. Diff. Geom. \textbf{32} (1990) 299-314.

\bibitem{CG2} C. Gerhardt, \textit{Inverse Curvature Flows in Hyperbolic Space}, J. Diff. Geom. \textbf{89} (2011) 487 - 527.

\bibitem{HLS} L-H Huang, D. Lee, C. Sormani, \textit{Intrinsic Flat Stability of the Positive mass Theorem for Graphical Hypersurfaces of Euclidean Space}, J. für die reine und ang. Math. (Crelle's Journal) (2015)

\bibitem{HI} G. Huisken and T. Ilmanen, \textit{The Inverse Mean Curvature Flow and the Riemannian Penrose Inequality}, J. Diff. Geom. \textbf{59} (2001) 353-437.

\bibitem{K2} N.V. Krylov, \textit{Nonlinear Elliptic and Parabolic Equations of the Second Order}, Mathematics and its Applications, Dordrecht: Reidel (1987)

\bibitem{L} D. Lee, \textit{On the Near-Equality Case of the Positive Mass Theorem}, Duke Math. J. \textbf{1} (2009) 63-80.

\bibitem{LS2} D. Lee and C. Sormani, \textit{Near-equality in the Penrose Inequality for Rotationally Symmetric Riemannian Manifolds}, Ann. Henri Poinc.  \textbf{13} (2012) 1537-1556.

\bibitem{LS1} D. Lee and C. Sormani, \textit{Stability of the Positive Mass Theorem for Rotationally Symmetric Riemannian Manifolds}, J. fur die Riene und Ang. Math.  \textbf{686} (2014) 187-220.

\bibitem{LeS} P. LeFloch and C. Sormani, \textit{The Nonlinear Stability of Spaces with Low Regularity}, J. of Func. Anal. \textbf{268} (2015) no. 7, 2005-2065.

\bibitem{Li} P. Li, \textit{Geometric Analysis}, Camb. Stud. in Adv. Math., Cambridge University Press \textbf{134} (2012). 

\bibitem{M} J. Moser, \textit{On the Volume Elements on a Manifold} Trans. Amer. Math. Soc. \textbf{120} (1965) 286-294.

\bibitem{PW} P. Petersen and G. Wei, \textit{Relative Volume Comparison with Integral Curvature Bounds}, Geom. Funct. Anal. \textbf{7} (1997) 1031-1045.

\bibitem{S} J. Scheuer, \textit{The inverse mean curvature flow in warped cylinders of non-positive radial curvature}, Adv. in Math \textbf{306} p. 1130-1163 (2017)

\bibitem{SY} R. Schoen and S.-T. Yau, \textit{On The Proof Of The Positive mass Conjecture In General Relativity}, Comm. Math. Phys. \textbf{65} (1979) 45-46.

\bibitem{U} J. Urbas, \textit{On the Expansion of Starshaped Hypersurfaces by Symmetric Functions of their Principal Curvatures}, Math. A. \textbf{205} (1990) 355-372.

\bibitem{W} E. Witten, \textit{A New Proof Of The Positive Energy Theorem}, Comm. Math. Phys. \textbf{80} (1981) 381-402.

\end{thebibliography}
\end{document}